\theoremstyle{plain}
\newtheorem{theorem}{Theorem}[section]
\newtheorem{lemma}[theorem]{Lemma}
\newtheorem{proposition}[theorem]{Proposition}
\newtheorem{corollary}[theorem]{Corollary}
\theoremstyle{definition}
\theoremstyle{remark}
\newtheorem{remark}[theorem]{Remark}
\newcommand\numberthis{\addtocounter{equation}{1}\tag{\theequation}}
\begin{document}


\title[Power Graphs]{On the minimum degree, edge-connectivity and connectivity of power graphs of finite groups}
\author[Ramesh Prasad Panda and K. V. Krishna]{Ramesh Prasad Panda and K. V. Krishna}
\address{Department of Mathematics, Indian Institute of Technology Guwahati, Guwahati, India}
\email{\{r.panda, kvk\}@iitg.ac.in}


\begin{abstract}
The power graph of a group $G$ is the graph whose vertex set is $G$ and two distinct vertices are adjacent if one is a power of the other. In this paper, the minimum degree of power graphs of certain classes of cyclic groups, abelian $p$-groups, dihedral groups and dicyclic groups are obtained. It is ascertained that the edge-connectivity and minimum degree of power graphs are equal, and consequently the minimum disconnecting sets of power graphs of the aforementioned groups are determined. Then the equality of connectivity and minimum degree of power graphs of finite groups is investigated and in this connection, certain necessary conditions are produced. A necessary and sufficient condition for the equality of connectivity and minimum degree of power graphs of finite cyclic groups is obtained. Moreover, the equality is examined for the power graphs of abelian $p$-groups, dihedral groups and dicyclic groups.
\end{abstract}

\subjclass[2010]{05C25, 05C40, 05C07, 20K01}

\keywords{Finite group, Power graph, Minimum degree, Connectivity}

\maketitle

\tableofcontents

\section{Introduction}

The notion of a directed power graph was introduced by Kelarev and Quinn \cite{kelarev2000combinatorial, kelarevDirectedSemigr}. The \emph{directed power graph} $\overrightarrow{\mathcal{G}}(S)$ of a semigroup $S$ is a directed graph with vertex set $S$ and there is an arc from a vertex $u$ to another vertex $v$ if $v=u^\alpha$ for some natural number $\alpha \in \mathbb{N}$. Subsequently, Chakrabarty et al. \cite{GhoshSensemigroups} defined (\emph{undirected}) \emph{power graph} $\mathcal{G}(S)$ of a semigroup $S$ as the (undirected) graph with vertex set $S$ and distinct vertices $u$ and $v$ are adjacent if $v=u^\alpha$ for some $\alpha \in \mathbb{N}$ or $u=v^\beta$ for some $\beta \in \mathbb{N}$.

Since its introduction, power graphs has been a topic of interest for many researchers. Studies in the literature are not only to understand properties of power graphs of (semi)groups, but also to characterize (semi)groups through their power graphs. Cameron and Ghosh \cite{Ghosh} proved that two finite abelian groups with isomorphic power graphs are isomorphic. Cameron \cite{Cameron} showed that given two finite groups, if their power graphs are isomorphic, then their directed power graphs are also isomorphic. It was shown by Curtin and Pourgholi \cite{curtin2014edge, curtin2016euler} that among all finite
groups of a given order, the cyclic group of that order has the maximum number of edges and has the largest clique in its power graph. In \cite{MR3200118}, Moghaddamfar et al. supplied necessary and sufficient conditions for a proper power graph a group $G$ -- the graph obtained by removing identity element from power graph of $G$ -- to be a strongly regular graph, a bipartite graph or a planar graph. In \cite{bubbo17,doostabadi2015connectivity}, the authors studied the components of proper power graphs. Many other researchers have investigated different aspects of powers graphs, e.g. \cite{MR3266285, MR3514980, MR3612206}.

The vertex connectivity (or simply connectivity) of power graphs has been explored by various authors.
In \cite{ChattopadhyayConnectivity,chattopadhyay2015laplacian}, Chattopadhyay and Panigrahi found the connectivity for power graphs of certain groups and gave upper bounds of connectivity $\kappa(\mathcal{G}(\mathbb{Z}_n))$ for $n$ with two prime factors or product of three primes, where $\mathbb{Z}_n$ is the additive group of integers modulo $n$. Recently, Panda and Krishna \cite{ConPowerGr17} improved those results by supplying upper bounds of $\kappa(\mathcal{G}(\mathbb{Z}_n))$ for all $n$ and found its value for some $n$. In a more general setup, it is a result due to Whitney \cite{whitney1932congruent} that  $\kappa(\Gamma) \leq \kappa'(\Gamma) \leq \delta(\Gamma)$, where $\Gamma$ is a finite simple graph, and $\kappa(\Gamma)$, $\kappa'(\Gamma)$ and $\delta(\Gamma)$ are its connectivity, edge connectivity and the minimum degree, respectively. Therefore, it is a natural to ask when at least two of the above graph parameters are equal. Several results on this can be found in literature. Chartrand \cite{chartrand1966graph} proved that if $\delta(\Gamma) \geq \left[\displaystyle \frac{|V(\Gamma)|}{2} \right]$, then $\kappa'(\Gamma) = \delta(\Gamma)$. A sufficient condition for $\kappa'(\Gamma) = \delta(\Gamma)$ due to Plesnik \cite{plesnik1975critical} is that $\text{diam}(\Gamma) \leq 2$. In \cite{CriticallyChartrand}, Chartrand et al. showed that if $\Gamma$ is a graph with $\kappa(\Gamma)=n$ and $\kappa(\Gamma-v)=n-1$ for every $v \in V(\Gamma)$, then $\delta(\Gamma) < \displaystyle \frac{3n-1}{2}$. Moreover, if $\kappa(\Gamma)=n$ and $\kappa(\Gamma-e)=n-1$ for every $e \in E(\Gamma)$, then $\delta(\Gamma)=n$ \cite{Halin-n-conn}.  In this paper, we aim to study $\delta(\Gamma)$ when $\Gamma$ is a power graph of a finite group and investigate its relation with $\kappa(\Gamma)$ and $\kappa'(\Gamma)$.

 The paper is organized as follows. We begin with presenting some necessary background material in \Cref{s-prelim}. In \Cref{s-eq-md-ec}, we first observe that the edge-connectivity and minimum degree of power graphs of finite groups are equal. Then we focus on investigating the minimum degree of power graphs of cyclic groups in \Cref{s-cyclic}. In fact, we obtain the minimum degree of $\mathcal{G}(\mathbb{Z}_n)$ when $n$ has two prime factors or $n$ is a product of at most four distinct primes. Consequently, we give two sharp upper bounds of the minimum degree of $\mathcal{G}(\mathbb{Z}_n)$ for all $n$. Further, in \Cref{s-grps}, we determine the minimum degree of power graphs of abelian $p$-groups, dihedral groups and dicyclic groups. Along with minimum degree, we also obtain minimum disconnecting sets of power graphs of all these groups. In \Cref{s-equal}, we show that if the connectivity and minimum degree of power graph of a finite group are equal then the group is of even order and find the minimum degree (hence connectivity) when the group is cyclic. Finally, we supply a necessary and sufficient condition for the equality of the connectivity and minimum degree of power graphs of cyclic groups and address the same for abelian $p$-groups, dihedral groups and dicyclic groups.

\section{Preliminaries}
\label{s-prelim}

The set of vertices and the set of edges of a graph $\Gamma$ are always denoted by $V(\Gamma)$ and $E(\Gamma)$, respectively. A graph with no loops or parallel edges is called a \emph{simple graph}.  A graph with one vertex and no edges is called a \emph{trivial graph}. Given a graph $\Gamma$, the degree of a vertex $v$ is denoted by $\deg_{\Gamma}(v)$ or simply $\deg(v)$, and the minimum degree of $\Gamma$ is denoted by $\delta(\Gamma)$.

A \emph{separating set} of a graph $\Gamma$ is a set of vertices whose removal increases the number of components of $\Gamma$. A separating set is \emph{minimal} if none of its proper subsets separates $\Gamma$. A separating set of $\Gamma$ with least cardinality is called a \emph{minimum separating set} of $\Gamma$. A \emph{disconnecting set} of $\Gamma$ is a set of edges whose removal increases the number of components of $\Gamma$. A disconnecting set is \emph{minimal} if none of its proper subsets disconnects $\Gamma$. A disconnecting set of $\Gamma$ with least cardinality is called a \emph{minimum disconnecting set} of $\Gamma$.

The \emph{vertex connectivity} (or simply \emph{connectivity}) of a graph $\Gamma$, denoted by $\kappa({\Gamma})$, is the minimum number of vertices whose removal results in a disconnected or trivial graph. The \emph{edge-connectivity} of a graph $\Gamma$, denoted by $\kappa'({\Gamma})$, is the minimum number of edges whose removal results in a disconnected or trivial graph. So, the connectivity and edge-connectivity of disconnected graphs or the trivial graph are always $0$.

 If $A$ is a vertex set or an edge set of a graph $\Gamma$, then the subgraph obtained by deleting $A$ from $\Gamma$ will be denoted by $\Gamma-A$. If $A=\{ x \}$, $\Gamma-A$ is simply written as $\Gamma - x$. The \emph{neighbourhood} $N(v)$ of a vertex $v$ in a simple graph $\Gamma$ is the set of vertices which are adjacent to $v$. If $A, B \subseteq V(\Gamma)$, then the set of all edges having one end in $A$ and the other in $B$ is denoted by $E[A,B]$. If $A=\{v\}$, we simply write $E[A,B]=E[v,B]$.

For a positive integer $n$, the number of positive integers that do not exceed $n$ and are relatively prime to $n$ is denoted by $\phi(n)$. The function $\phi$ is known as \emph{Euler's phi function}. If an integer $n>1$ has the prime factorization $p_1^{\alpha_1} p_2^{\alpha_2}\ldots p_r^{\alpha_r}$, then  $\phi(n)=\displaystyle\prod_{i=1}^r (p_i^{\alpha_i}-p_i^{\alpha_i-1})=n \prod_{i=1}^r \left(1- \dfrac{1}{p_i}\right)$ (cf. \cite[Theorem 7.3]{burton2006elementary}).

In a group $G$, the cyclic subgroup generated by $x \in G$ is denoted by $\langle x \rangle$. For a prime $p$, a $p$-group is a finite group
whose order is some power of $p$. If two finite groups are isomorphic, their corresponding power graphs are isomorphic. Since a cyclic group of order $n$ is isomorphic to the additive group of integers modulo $n$, written as $\mathbb{Z}_n = \{\overline{0},\overline{1}, \ldots, \overline{n-1}\}$, we prove the results for $\mathbb{Z}_n$ instead of an arbitrary cyclic group.

For $n \in \mathbb{N}$, we denote the set consisting of $\overline{0}$ and generators of $\mathbb{Z}_n$ by $\mathcal{S}(\mathbb{Z}_n)$ i.e. $\mathcal{S}(\mathbb{Z}_n)=\left \{\overline{a}\in \mathbb{Z}_n :1 \leq a<n, \gcd(a, n)=1 \right \} \cup  \{\overline{0} \}$. We further write $\widetilde{\mathbb{Z}}_n=\mathbb{Z}_n-\mathcal{S}(\mathbb{Z}_n)$ and $\mathcal{\widetilde{G}}(\mathbb{Z}_n)=\mathcal{G}(\mathbb{Z}_n)-\mathcal{S}(\mathbb{Z}_n)$, so that $V(\widetilde{\mathcal{G}}(\mathbb{Z}_n))=\widetilde{\mathbb{Z}}_n$. We notice that $\widetilde{\mathbb{Z}}_n$ is empty if and only if $n \in \mathbb{N}$ is prime.

\begin{remark}\label{szn-adj}
For $n \in \mathbb{N}$,  each $\overline{a} \in \mathcal{S}(\mathbb{Z}_n)$ is adjacent to every other element of $\mathcal{G}(\mathbb{Z}_n)$ and hence $\deg(\overline{a})=n-1$ for all $\overline{a} \in \mathcal{S}(\mathbb{Z}_n)$.
\end{remark}

The following lemma gives us a formula to compute degrees of elements $\widetilde{\mathbb{Z}}_n$.

\begin{lemma}[{\cite[Lemma 3.4]{MR3200118}}]\label{verdeg}
Suppose $n \in \mathbb{N}$ is not a prime power and $\overline{a} \in \widetilde{\mathbb{Z}}_n$. If $b=\gcd(a,n)$, then in $\mathcal{G}(\mathbb{Z}_n)$,
$$ \deg(\overline{a})=\frac{n}{b} + \sum \limits_{d|b, d\neq b}\phi \left( \frac{n}{d} \right)-1.$$
\end{lemma}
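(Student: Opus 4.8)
The plan is to describe the neighbourhood $N(\overline{a})$ explicitly in terms of the cyclic subgroup lattice of $\mathbb{Z}_n$ and then count it by partitioning the neighbours according to the cyclic subgroup they generate. The starting point is to rewrite the adjacency relation: since $\mathbb{Z}_n$ is cyclic, $\overline{a}$ and $\overline{v}$ are adjacent in $\mathcal{G}(\mathbb{Z}_n)$ precisely when $\overline{v}\in\langle\overline{a}\rangle$ or $\overline{a}\in\langle\overline{v}\rangle$, equivalently when $\langle\overline{v}\rangle\subseteq\langle\overline{a}\rangle$ or $\langle\overline{a}\rangle\subseteq\langle\overline{v}\rangle$. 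I would then recall the two standard facts about cyclic groups that drive the computation: first, $\langle\overline{a}\rangle=\langle\overline{b}\rangle$ where $b=\gcd(a,n)$, and this subgroup has order $n/b$; second, for divisors $d_1,d_2$ of $n$ one has $\langle\overline{d_1}\rangle\subseteq\langle\overline{d_2}\rangle$ if and only if $d_2\mid d_1$.

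Next I would split $N(\overline{a})$ into two disjoint pieces. The \emph{downward} neighbours are those $\overline{v}$ with $\langle\overline{v}\rangle\subseteq\langle\overline{a}\rangle$; since this containment forces $\overline{v}\in\langle\overline{a}\rangle$, these are exactly the elements of $\langle\overline{a}\rangle$ other than $\overline{a}$ itself, contributing $n/b-1$ vertices. The remaining neighbours are those $\overline{v}$ with $\langle\overline{a}\rangle\subseteq\langle\overline{v}\rangle$ that do not already lie in $\langle\overline{a}\rangle$; such a $\overline{v}$ must satisfy the strict containment $\langle\overline{a}\rangle\subsetneq\langle\overline{v}\rangle$, since equality of the two subgroups would put $\overline{v}\in\langle\overline{a}\rangle$. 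The key point is that these two families are disjoint and their union is all of $N(\overline{a})$, so the degree equals $n/b-1$ plus the number of $\overline{v}$ with $\langle\overline{a}\rangle\subsetneq\langle\overline{v}\rangle$.

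To count the second family I would partition its elements by the subgroup they generate. Writing $d=\gcd(v,n)$, the condition $\langle\overline{a}\rangle\subsetneq\langle\overline{v}\rangle$ translates, via the containment criterion above, into $d\mid b$ with $d\neq b$. For each such proper divisor $d$ of $b$, the elements $\overline{v}$ with $\langle\overline{v}\rangle=\langle\overline{d}\rangle$ are precisely the generators of the order-$(n/d)$ subgroup $\langle\overline{d}\rangle$, of which there are $\phi(n/d)$. Summing over all divisors $d\mid b$ with $d\neq b$ gives $\sum_{d\mid b,\,d\neq b}\phi(n/d)$ vertices, and adding the $n/b-1$ downward neighbours yields the claimed formula.

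The underlying facts are standard, so I expect the only real care to be needed in the bookkeeping of the second paragraph: verifying that the two neighbour classes are genuinely disjoint and exhaustive (so that no vertex is double-counted and none is missed), and that the translation of $\langle\overline{a}\rangle\subsetneq\langle\overline{v}\rangle$ into ``$d\mid b$, $d\neq b$'' is an exact correspondence between the relevant $\overline{v}$ and the generators of the subgroups $\langle\overline{d}\rangle$. The hypotheses that $n$ is not a prime power and $\overline{a}\in\widetilde{\mathbb{Z}}_n$ only serve to guarantee that $b=\gcd(a,n)$ satisfies $1<b<n$, so that the index sum runs over a nonempty set of proper divisors; the counting argument itself would go through verbatim in general.
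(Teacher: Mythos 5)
Your proof is correct, but there is nothing in the paper itself to compare it against: the paper states this lemma as an imported result, cited from Moghaddamfar et al.\ \cite{MR3200118}, and gives no proof of it. Your argument supplies the natural counting proof and the bookkeeping is sound. The decomposition of $N(\overline{a})$ into the elements of $\langle\overline{a}\rangle\setminus\{\overline{a}\}$ (giving $n/b-1$ vertices, since $\langle\overline{a}\rangle=\langle\overline{b}\rangle$ has order $n/b$) and the elements $\overline{v}$ with $\langle\overline{a}\rangle\subsetneq\langle\overline{v}\rangle$ is genuinely disjoint and exhaustive, exactly for the reasons you give: $\overline{v}\in\langle\overline{a}\rangle$ forces $\langle\overline{v}\rangle\subseteq\langle\overline{a}\rangle$, which excludes strict containment the other way, and if $\langle\overline{a}\rangle=\langle\overline{v}\rangle$ then $\overline{v}$ already lies in the first class. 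The second class is correctly enumerated by grouping its elements according to the subgroup $\langle\overline{v}\rangle=\langle\overline{d}\rangle$ they generate, where $d=\gcd(v,n)$: the subgroups strictly containing $\langle\overline{b}\rangle$ correspond bijectively to the divisors $d\mid b$ with $d\neq b$, and each $\langle\overline{d}\rangle$, being cyclic of order $n/d$, has exactly $\phi(n/d)$ generators. Your closing observation is also accurate: the hypotheses only guarantee $1<b<n$, and the same count gives the formula for any $\overline{a}\in\widetilde{\mathbb{Z}}_n$ even when $n$ is a prime power $p^\alpha$ with $\alpha\geq 2$, in which case it returns $n-1$, consistent with $\mathcal{G}(\mathbb{Z}_{p^\alpha})$ being complete.
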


Chakrabarty et al. have obtained the following basic properties of power graphs of finite groups.

\begin{lemma}[\cite{GhoshSensemigroups}]\label{CompleteCond}
Let $G$ be a finite group.
\begin{enumerate}[\rm(i)]
\item The power graph $\mathcal{G}(G)$ is always connected.
\item The power graph $\mathcal{G}(G)$ is complete if and only if $G$ is a
cyclic group of order $1$ or $p^\alpha$, for some prime number $p$ and for some $\alpha \in \mathbb{N}$.
\end{enumerate}
\end{lemma}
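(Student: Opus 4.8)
The plan is to handle the two parts separately, since part~(i) follows from a single structural observation while part~(ii) requires arguing both directions of the biconditional.

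For part~(i), the key point is that the identity element $e$ of $G$ is a universal vertex of $\mathcal{G}(G)$. Indeed, for every $x \in G$ we have $e = x^{\mathrm{ord}(x)}$, so $e$ is a power of $x$ and hence is adjacent to every other vertex. A graph possessing a vertex adjacent to all others is connected (its diameter is at most $2$), which gives the claim at once.

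For the reverse implication of part~(ii), suppose $G$ is trivial or cyclic of prime power order $p^\alpha$. The trivial case is immediate, so assume $G \cong \mathbb{Z}_{p^\alpha}$. I would use the fact that the subgroups of a cyclic $p$-group form a single chain $\langle e \rangle < H_1 < \cdots < H_\alpha = G$ ordered by inclusion. Given any two elements $x, y$, the cyclic subgroups $\langle x \rangle$ and $\langle y \rangle$ are members of this chain, hence comparable; if $\langle x \rangle \subseteq \langle y \rangle$, then $x \in \langle y \rangle$, so $x$ is a power of $y$ and the two vertices are adjacent. As $x, y$ were arbitrary, $\mathcal{G}(G)$ is complete.

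For the forward implication, assume $\mathcal{G}(G)$ is complete. First I would show $G$ is cyclic: choosing an element $g$ of maximal order and an arbitrary $x \in G$, completeness forces one of $x, g$ to be a power of the other, and in either case a comparison of orders (using that $\mathrm{ord}(g)$ is maximal) yields $x \in \langle g \rangle$, whence $G = \langle g \rangle$. It then remains to rule out that $n = |G|$ has two distinct prime divisors $p, q$: here I would invoke the existence of elements of orders $p$ and $q$ in the cyclic group $\mathbb{Z}_n$, observe that completeness makes them adjacent, and derive $p \mid q$ or $q \mid p$, contradicting that $p, q$ are distinct primes; hence $n = 1$ or $n = p^\alpha$. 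I expect this forward direction to be the main obstacle, specifically the step upgrading completeness to cyclicity, which must handle both orientations of the power relation and exploit maximality of $\mathrm{ord}(g)$ to conclude $\langle x \rangle \subseteq \langle g \rangle$ rather than merely that the two subgroups are comparable. Once cyclicity is established, the prime-power conclusion follows from the elementary divisibility contradiction above.
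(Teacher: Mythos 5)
Your proof is correct in all its parts: the identity element is indeed a universal vertex (giving connectedness), the chain structure of subgroups of a cyclic $p$-group gives completeness in one direction, and your two-step forward argument (maximal-order element forces cyclicity, then the divisibility contradiction rules out two distinct prime factors) is sound — in particular the delicate step is handled properly, since $\langle g \rangle \subseteq \langle x \rangle$ together with maximality of $\mathrm{ord}(g)$ forces $\langle g \rangle = \langle x \rangle$, not merely comparability. Note, however, that the paper itself offers no proof of this lemma: it is quoted verbatim from Chakrabarty, Ghosh and Sen \cite{GhoshSensemigroups} as background material, so there is no internal argument to compare against; your reconstruction is essentially the standard proof found in that original source.
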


 When $n$ is composite, the following lemma provides a characterization for which $\mathcal{\widetilde{G}}(\mathbb{Z}_n)$ is connected.

\begin{lemma}[{\cite[Proposition 2.4]{ConPowerGr17}}]\label{SepSetLemma}
Let $n \in \mathbb{N}$ be a composite number. Then $\mathcal{\widetilde{G}}(\mathbb{Z}_n)$ is disconnected if and only if $n$ is a product of two distinct primes.
\end{lemma}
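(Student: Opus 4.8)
The plan is to reduce the connectivity of $\mathcal{\widetilde{G}}(\mathbb{Z}_n)$ to a purely divisibility-theoretic question about the proper divisors of $n$. The key structural fact is that in $\mathbb{Z}_n$ each cyclic subgroup is the unique subgroup of its order, so for $\overline{a},\overline{b}\in\widetilde{\mathbb{Z}}_n$ one has $\overline{a}\in\langle\overline{b}\rangle$ exactly when $\operatorname{ord}(\overline{a})\mid\operatorname{ord}(\overline{b})$. Hence $\overline{a}$ and $\overline{b}$ are adjacent in $\mathcal{\widetilde{G}}(\mathbb{Z}_n)$ if and only if one of $\operatorname{ord}(\overline{a}),\operatorname{ord}(\overline{b})$ divides the other. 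Since $\operatorname{ord}(\overline{a})=n/\gcd(a,n)$, the orders occurring among the vertices of $\widetilde{\mathbb{Z}}_n$ are precisely the elements of $D=\{m:m\mid n,\ 1<m<n\}$, each $m\in D$ being realised by the nonempty clique $B_m$ of elements of order $m$. For $m\neq m'$, the blocks $B_m$ and $B_{m'}$ are completely joined when $m,m'$ are comparable under divisibility and carry no edges between them otherwise. Therefore $\mathcal{\widetilde{G}}(\mathbb{Z}_n)$ is connected if and only if the \emph{divisibility graph} $\Delta$ on vertex set $D$, in which $m\sim m'$ iff $m\mid m'$ or $m'\mid m$, is connected.

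First I would settle the ``if'' direction. If $n=pq$ for distinct primes $p,q$, then $D=\{p,q\}$ and neither divides the other, so $\Delta$, and hence $\mathcal{\widetilde{G}}(\mathbb{Z}_n)$, is disconnected.

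For the converse I would prove the contrapositive: if $n$ is composite and $n\neq pq$, then $\Delta$ is connected. Writing $n=p_1^{\alpha_1}\cdots p_r^{\alpha_r}$, I designate the maximal proper divisors $n/p_1,\dots,n/p_r$ as hubs; each lies in $D$ because $n$ is composite. The argument then has two steps. Every $d\in D$ is comparable to some hub: as $d<n$ there is an index $i$ with the exponent of $p_i$ in $d$ strictly below that in $n$, whence $d\mid n/p_i$, so $d$ is adjacent to $n/p_i$ in $\Delta$. Secondly, when $r\ge 2$ any two hubs $n/p_i,n/p_j$ are joined through the common divisor $n/(p_ip_j)$, which divides both and lies in $D$ exactly when $n>p_ip_j$. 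This inequality is the only place the hypothesis enters: if $r\ge 3$ then $n\ge p_ip_jp_k>p_ip_j$, while if $r=2$ the assumption $n\neq p_1p_2$ forces $\alpha_1+\alpha_2\ge 3$ and again $n>p_1p_2$. Thus $n/(p_ip_j)\in D$ links every pair of hubs, all hubs lie in one component, and since every vertex attaches to a hub, $\Delta$ is connected. (For $r=1$, i.e. $n$ a prime power, the single hub already absorbs every vertex, in accordance with $\mathcal{G}(\mathbb{Z}_n)$ being complete by \Cref{CompleteCond}.)

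I expect the reduction of the first paragraph to be the conceptual core: once adjacency is recognised as depending only on the divisibility relation among element orders, the problem collapses to analysing $\Delta$, and the remaining verifications are short. The only genuinely delicate point is the pairwise inequality $n>p_ip_j$, which is exactly where being a product of two distinct primes becomes the unique obstruction; care is needed to treat the squarefree three-prime case and the non-squarefree two-prime case separately, as indicated.
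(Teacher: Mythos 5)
Your proof is correct. Note, however, that this paper does not actually prove \Cref{SepSetLemma}: it is imported verbatim from \cite[Proposition 2.4]{ConPowerGr17}, so there is no in-paper argument to compare against; your write-up functions as a self-contained substitute for the citation. Your reduction is sound: in $\mathbb{Z}_n$ the subgroup of each order is unique, so adjacency in $\mathcal{\widetilde{G}}(\mathbb{Z}_n)$ depends only on divisibility of element orders, the order classes $B_m$ ($m\mid n$, $1<m<n$) are nonempty cliques, and two classes are completely joined or completely non-adjacent according to comparability of $m,m'$; hence connectivity of $\mathcal{\widetilde{G}}(\mathbb{Z}_n)$ is equivalent to connectivity of the divisibility graph $\Delta$ on the proper divisors. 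Both directions then check out: for $n=pq$ the set $D=\{p,q\}$ gives two isolated blocks (consistent with how the paper uses the lemma, where the components are induced by $\langle \overline{p}\rangle^*$ and $\langle \overline{q}\rangle^*$), and in all other composite cases your hub argument works, the only nontrivial point being $n/(p_ip_j)>1$ for every pair of hubs, which you correctly isolate as holding exactly when $n$ is not a product of two distinct primes (covering both the $r\geq 3$ case and the $r=2$, $\alpha_1+\alpha_2\geq 3$ case, with the prime-power case $r=1$ handled by completeness of $\mathcal{G}(\mathbb{Z}_{p^\alpha})$). No gaps.
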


\section{Equality of edge-connectivity and minimum degree of power graphs}
\label{s-eq-md-ec}

In this section, we show that the edge-connectivity and minimum degree of power graphs of finite groups are equal, and consequently present a way to find the minimum disconnecting sets in terms of the neighbourhoods of the vertices having minimum degree.

It was proved in \cite{whitney1932congruent} that $\kappa(\Gamma) \leq \kappa'(\Gamma) \leq \delta(\Gamma)$ for any finite simple graph $\Gamma$. In particular, we have the following for power graphs.

\begin{lemma}\label{ConDeglneqPower}
If $G$ is a finite group, then $\kappa(\mathcal{G}(G))\leq \kappa'(\mathcal{G}(G)) \leq \delta(\mathcal{G}(G))$.
\end{lemma}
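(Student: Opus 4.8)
The plan is to obtain this as an immediate specialization of Whitney's inequality $\kappa(\Gamma) \leq \kappa'(\Gamma) \leq \delta(\Gamma)$, which holds for every finite simple graph $\Gamma$ and is recalled in the paragraph just above the statement (from \cite{whitney1932congruent}). Consequently, the only thing that actually requires verification is that the power graph $\mathcal{G}(G)$ of a finite group $G$ lies within the scope of that theorem, namely that it is a finite simple graph; once this is confirmed, the conclusion follows by substituting $\Gamma = \mathcal{G}(G)$.

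First I would note that the vertex set of $\mathcal{G}(G)$ is $G$ itself, which is finite because $G$ is a finite group, so $\mathcal{G}(G)$ has finitely many vertices. Next, directly from the definition of the power graph, its edges join \emph{distinct} vertices (two distinct elements are adjacent precisely when one is a power of the other), so there are no loops; moreover adjacency is a symmetric relation on unordered pairs of elements, so no two vertices are joined by more than one edge, and hence there are no parallel edges. Therefore $\mathcal{G}(G)$ is a finite simple graph.

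With this observation in place, applying Whitney's inequality to $\Gamma = \mathcal{G}(G)$ directly yields $\kappa(\mathcal{G}(G)) \leq \kappa'(\mathcal{G}(G)) \leq \delta(\mathcal{G}(G))$, which is exactly the claim. There is no real obstacle to overcome here: the statement is a restatement of the general bound in the special case of power graphs, and the entire content of the argument is the routine check that the power graph satisfies the finiteness and simplicity hypotheses. It is worth recording separately only because it supplies the starting inequality against which the subsequent results of this section—establishing when these three parameters coincide—are measured.
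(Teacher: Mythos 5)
Your proposal is correct and follows exactly the paper's own route: the paper also obtains this lemma as an immediate specialization of Whitney's inequality $\kappa(\Gamma)\leq\kappa'(\Gamma)\leq\delta(\Gamma)$ for finite simple graphs, with the observation that $\mathcal{G}(G)$ is such a graph left implicit. Your explicit check of the finiteness and simplicity hypotheses is a harmless elaboration of the same argument.
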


 Let $G$ is a finite group. If $|G| \leq 2$, then trivially $\kappa'(\mathcal{G}(G))=\delta(\mathcal{G}(G))$. If $|G|\geq 3$, then every pair $x$, $y$ of distinct vertices is connected by the path $x,e,y$ of length $2$ in $\mathcal{G}(G)$. Combining this with the fact that if $\Gamma$ is a graph with $diam(\Gamma)\leq 2$, then $\kappa'(\Gamma)=\delta(\Gamma)$  (see \cite{plesnik1975critical}), we obtain the following.

\begin{theorem}\label{EdgeConDegEq}
If $G$ is a finite group, then $\kappa'(\mathcal{G}(G))=\delta(\mathcal{G}(G))$.
\end{theorem}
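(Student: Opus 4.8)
The plan is to reduce the statement to the sufficient condition of Plesnik \cite{plesnik1975critical}, namely that any finite simple graph $\Gamma$ with $\mathrm{diam}(\Gamma) \leq 2$ satisfies $\kappa'(\Gamma) = \delta(\Gamma)$. Since \Cref{ConDeglneqPower} already gives $\kappa'(\mathcal{G}(G)) \leq \delta(\mathcal{G}(G))$, the entire burden of the proof is to verify that $\mathcal{G}(G)$ has diameter at most $2$, after disposing of the two small trivial cases.

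First I would handle $|G| \leq 2$ directly: here $\mathcal{G}(G)$ is either the trivial graph or a single edge, and in both instances $\kappa'(\mathcal{G}(G))$ and $\delta(\mathcal{G}(G))$ coincide (both equal to $0$, respectively to $1$). For $|G| \geq 3$, the key observation is that the identity element $e$ of $G$ is adjacent to every other vertex of $\mathcal{G}(G)$: for any $g \in G$ we have $e = g^{\mathrm{ord}(g)}$, so $e$ is a power of $g$, and the two vertices are adjacent by the definition of the power graph. Consequently, given any two distinct vertices $x$ and $y$, the walk $x, e, y$ is a path of length at most $2$ joining them (of length $1$ when one of the two is $e$ itself), so every pair of vertices is at distance at most $2$ and hence $\mathrm{diam}(\mathcal{G}(G)) \leq 2$.

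Applying Plesnik's criterion to $\Gamma = \mathcal{G}(G)$ then yields $\kappa'(\mathcal{G}(G)) = \delta(\mathcal{G}(G))$, which completes the argument. In truth there is essentially no obstacle to surmount here: the only genuine content is the elementary fact that the identity serves as a universal neighbour, which forces the small diameter, and all the combinatorial difficulty is outsourced to the cited diameter-two criterion. The alternative route of establishing $\kappa' = \delta$ from first principles — by showing that any minimum edge cut must in fact isolate a single vertex of minimum degree, via an examination of the two sides of the cut — would be considerably more delicate, so channelling the proof through the diameter bound is clearly the efficient path and the one I would take.
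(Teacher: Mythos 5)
Your proposal is correct and follows exactly the paper's own argument: dispose of $|G| \leq 2$ trivially, observe that for $|G| \geq 3$ the identity is adjacent to every vertex so any two vertices are joined by the path $x, e, y$ giving $\mathrm{diam}(\mathcal{G}(G)) \leq 2$, and then invoke Plesnik's criterion that diameter at most $2$ forces $\kappa'(\Gamma) = \delta(\Gamma)$. There is no substantive difference between your proof and the one in the paper.
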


Let $G$ be a finite group, $|G| \geq 2$ and $x \in G$ be such that $\delta(\mathcal{G}(G)) = deg(x)$. Observe that $E[x, N(x)]$ is a disconnecting set of $\mathcal{G}(G)$. Moreover, it follows from \Cref{EdgeConDegEq} that $|E[x, N(x)]|=\deg(x)=\kappa'(\mathcal{G}(G))$. Thus we have the following lemma.

\begin{lemma}\label{MinDisconSet}
Let $G$ be a finite group, $|G| \geq 2$ and $x \in G$ be such that $\delta(\mathcal{G}(G))=deg(x)$. Then $E[x, N(x)]$ is a minimum disconnecting set of  $\mathcal{G}(G)$.
\end{lemma}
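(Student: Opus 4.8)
The plan is to assemble the two facts already recorded in the discussion preceding the statement: that $E[x,N(x)]$ disconnects the graph, and that its cardinality equals the edge-connectivity. Concretely, I would (1) confirm that $E[x,N(x)]$ is a disconnecting set, (2) count the edges it contains, and (3) compare that count with $\kappa'(\mathcal{G}(G))$ to deduce minimality.

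First I would argue that $E[x,N(x)]$ is a disconnecting set. By \Cref{CompleteCond}(i) the power graph $\mathcal{G}(G)$ is connected, so it has a single component, and since $|G|\geq 2$ there is at least one vertex besides $x$. The set $E[x,N(x)]$ consists of exactly the edges incident to $x$, so deleting it isolates $x$ while keeping every other vertex in $\mathcal{G}(G)-E[x,N(x)]$. Thus the number of components strictly increases (from $1$ to at least $2$), and by definition $E[x,N(x)]$ is a disconnecting set. Next I would count: each neighbour $y\in N(x)$ contributes the single edge joining $x$ and $y$, and distinct neighbours give distinct edges, so $|E[x,N(x)]|=|N(x)|=\deg(x)$. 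Invoking the hypothesis $\deg(x)=\delta(\mathcal{G}(G))$ together with \Cref{EdgeConDegEq}, which gives $\delta(\mathcal{G}(G))=\kappa'(\mathcal{G}(G))$, I conclude $|E[x,N(x)]|=\kappa'(\mathcal{G}(G))$.

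Finally, by the definition of edge-connectivity every disconnecting set of $\mathcal{G}(G)$ has at least $\kappa'(\mathcal{G}(G))$ edges. Since $E[x,N(x)]$ is a disconnecting set attaining this lower bound, it must be a minimum disconnecting set, which is the assertion. I do not expect a serious obstacle here, as the substantive content has been front-loaded into \Cref{EdgeConDegEq}; the only point that warrants explicit care is the very first step, where connectivity and the hypothesis $|G|\geq 2$ are both needed to be certain that isolating $x$ genuinely raises the component count.
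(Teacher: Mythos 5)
Your proof is correct and follows essentially the same route as the paper: observe that $E[x,N(x)]$ is a disconnecting set, note $|E[x,N(x)]|=\deg(x)=\delta(\mathcal{G}(G))=\kappa'(\mathcal{G}(G))$ by \Cref{EdgeConDegEq}, and conclude minimality. The paper states the first step as a bare observation, whereas you justify it via connectedness (\Cref{CompleteCond}(i)) and $|G|\geq 2$; this is a harmless elaboration, not a different argument.
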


\section{Minimum degree of $\mathcal{G}(\mathbb{Z}_n)$}
\label{s-cyclic}

In this section, we first give a sufficient condition for equality of degrees of elements of power graphs of finite groups. We then observe that  $\delta(\mathcal{G}(\mathbb{Z}_n))$ is the degree of one of the proper divisors of $n$ (cf. \Cref{MinDegVertex}). We show that $\phi(n) + 1$ is a sharp lower bound for $\delta(\mathcal{G}(\mathbb{Z}_n))$. Further, we obtain some inequalities involving degrees of various elements of $\mathbb{Z}_n$ (cf. \Cref{degcompare}). Subsequently, we determine $\delta(\mathcal{G}(\mathbb{Z}_n))$ when $n$ has two prime factors or $n$ is a product of at most four distinct primes (cf. \Cref{MinDegValue}). We conclude this section by giving two sharp upper bounds of $\delta(\mathcal{G}(\mathbb{Z}_n))$.

\begin{lemma}\label{OrderDeg}
Let $G$ be a  finite group and $x,y \in G$. If $\langle x \rangle =\langle y \rangle$, then $\deg(x)=\deg(y)$ in $\mathcal{G}(G)$.
\end{lemma}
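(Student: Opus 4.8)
The plan is to prove the statement by showing that $x$ and $y$ have essentially the same neighbourhood in $\mathcal{G}(G)$, so that their degrees agree. The case $x=y$ is trivial, so I would assume $x\neq y$ throughout. The central observation is that in a finite group the relation ``$u$ is a power of $v$'' is exactly the relation $u\in\langle v\rangle$: every power of $v$ lies in $\langle v\rangle$, and conversely every element of the cyclic group $\langle v\rangle$ is a positive power of $v$. Hence two distinct vertices $u,v$ are adjacent in $\mathcal{G}(G)$ precisely when $u\in\langle v\rangle$ or $v\in\langle u\rangle$, and I would phrase the whole argument in these subgroup-theoretic terms.

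The key step is to establish that for every vertex $z\in G$ with $z\neq x$ and $z\neq y$, the vertex $z$ is adjacent to $x$ if and only if $z$ is adjacent to $y$. I would prove this by splitting the adjacency of $z$ with $x$ into its two defining cases. If $z\in\langle x\rangle$, then since $\langle x\rangle=\langle y\rangle$ we get $z\in\langle y\rangle$, so $z$ is adjacent to $y$. If instead $x\in\langle z\rangle$, then $\langle x\rangle\subseteq\langle z\rangle$ because $\langle x\rangle$ is the smallest subgroup containing $x$; combined with $y\in\langle y\rangle=\langle x\rangle$ this yields $y\in\langle z\rangle$, so again $z$ is adjacent to $y$. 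The reverse implication follows by interchanging the roles of $x$ and $y$, using $\langle y\rangle=\langle x\rangle$ symmetrically. This gives the set equality $N(x)\setminus\{y\}=N(y)\setminus\{x\}$.

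To finish, I would note that $x$ and $y$ are themselves adjacent: since $\langle x\rangle=\langle y\rangle$ we have $y\in\langle x\rangle$, so $y\in N(x)$ and $x\in N(y)$. Writing $A=N(x)\setminus\{y\}=N(y)\setminus\{x\}$, we obtain $N(x)=A\cup\{y\}$ and $N(y)=A\cup\{x\}$ as disjoint unions, whence $\deg(x)=|A|+1=\deg(y)$. No single step presents a real obstacle; the only point requiring care is the direction $x\in\langle z\rangle$, where one must invoke the containment $\langle x\rangle\subseteq\langle z\rangle$ rather than directly relating $z$ and $y$, and then keep track of which vertex is a power of which so that the argument is genuinely symmetric in $x$ and $y$.
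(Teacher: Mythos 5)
Your proof is correct and follows essentially the same route as the paper: observe that $x$ and $y$ are adjacent to each other, show that every other vertex $z$ is adjacent to $x$ if and only if it is adjacent to $y$, and conclude the degrees agree. The only difference is that the paper states the equivalence for $z$ without proof, whereas you spell out the two adjacency cases (via $z\in\langle x\rangle=\langle y\rangle$ and via $\langle x\rangle\subseteq\langle z\rangle$), which is a welcome amount of detail but not a different argument.
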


\begin{proof}
Suppose $\langle x \rangle =\langle y \rangle$, so that $x$ and $y$ are powers of each other in $G$. In particular, $x$ and $y$ are adjacent in $\mathcal{G}(G)$. Further, any $z \in G$ distinct from $x, y$ is adjacent to $x$ if and only if it is adjacent to $y$ in $\mathcal{G}(G)$. Hence $\deg(x)=\deg(y)$.
\end{proof}

The converse need not be true though. For example, in $\mathcal{G} ( \mathbb{Z}_{12} )$, $\deg\left(\overline{2}\right) = \deg\left(\overline{6}\right)=9$, but $\langle\overline{2} \rangle \neq \langle \overline{6} \rangle$.\

 \begin{lemma}\label{MinDegVertex}
If $n \in \mathbb{N}$ is a composite number, then there exists $\overline{c} \in \mathbb{Z}_n$ satisfying $1<c<n$ and $c|n$ such that $\delta(\mathcal{G}(\mathbb{Z}_n))=\deg\left(\overline{c}\right)$.
\end{lemma}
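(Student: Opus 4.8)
The plan is to reduce the search for a minimum-degree vertex to the set of divisors of $n$, exploiting the fact that in a cyclic group the degree of a vertex is completely determined by the cyclic subgroup it generates. First I would observe that for any $\overline{a} \in \mathbb{Z}_n$, writing $d = \gcd(a,n)$, the element $\overline{a}$ has order $n/d$; since $\overline{d}$ also has order $n/d$ and $\mathbb{Z}_n$ possesses a unique subgroup of each order dividing $n$, it follows that $\langle \overline{a} \rangle = \langle \overline{d} \rangle$. \Cref{OrderDeg} then gives $\deg(\overline{a}) = \deg(\overline{d})$. Consequently every degree occurring in $\mathcal{G}(\mathbb{Z}_n)$ is of the form $\deg(\overline{d})$ for a divisor $d$ of $n$, so that $\delta(\mathcal{G}(\mathbb{Z}_n)) = \min\{\deg(\overline{d}) : d \mid n\}$.

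Next I would choose a vertex $\overline{a}$ realising the minimum degree and set $d = \gcd(a,n)$, so that $\deg(\overline{d}) = \delta(\mathcal{G}(\mathbb{Z}_n))$ with $d \mid n$. If $1 < d < n$, then $c = d$ is the required proper divisor and we are finished. The only other possibilities are $d = 1$ (which occurs exactly when $a$ is a generator) and $d = n$ (which occurs exactly when $\overline{a} = \overline{0}$); in both cases \Cref{szn-adj} yields $\deg(\overline{d}) = n-1$, and hence $\delta(\mathcal{G}(\mathbb{Z}_n)) = n-1$.

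To settle this boundary case I would use that $n-1$ is the largest degree attainable in a simple graph on $n$ vertices, so the equality $\delta(\mathcal{G}(\mathbb{Z}_n)) = n-1$ forces \emph{every} vertex to have degree $n-1$. Because $n$ is composite it admits some divisor $c$ with $1 < c < n$, and for this $c$ we obtain $\deg(\overline{c}) = n-1 = \delta(\mathcal{G}(\mathbb{Z}_n))$, again producing a proper divisor with minimum degree.

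The argument is short, and the one point demanding care is the initial reduction: one must justify $\langle \overline{a} \rangle = \langle \overline{d} \rangle$ via the \emph{uniqueness} of the subgroup of each order in a cyclic group, not merely the coincidence of orders. The degenerate case $d \in \{1, n\}$ corresponds precisely to $n$ being a prime power, where \Cref{CompleteCond} renders $\mathcal{G}(\mathbb{Z}_n)$ complete; this is the only other subtlety, and it is disposed of by the maximum-degree observation above.
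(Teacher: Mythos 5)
Your proof is correct and takes essentially the same route as the paper: both reduce to divisors of $n$ via the identity $\langle \overline{a} \rangle = \langle \overline{\gcd(a,n)} \rangle$ combined with \Cref{OrderDeg}. The paper simply avoids your boundary case up front, observing via \Cref{szn-adj} that every element of $\mathcal{S}(\mathbb{Z}_n)$ has the maximum degree $n-1$, so the minimum degree is already attained on the nonempty set $\widetilde{\mathbb{Z}}_n$, which forces $1 < \gcd(a,n) < n$ automatically.
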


\begin{proof}
Since $n$ is a composite number, $\widetilde{\mathbb{Z}}_n \neq \emptyset$. First of all, $\deg(\overline{a}) \leq n-1$ for all $\overline{a} \in \mathbb{Z}_n$, and by \Cref{szn-adj}, $\deg(\overline{b}) = n-1$ for all $\overline{b} \in \mathcal{S}(\mathbb{Z}_n)$.  So there exists $\overline{a} \in \widetilde{\mathbb{Z}}_n$ such that $\delta(\mathcal{G}(\mathbb{Z}_n))=\deg\left( \overline{a} \right)$. Now take $c =\gcd(a, n)$, so that $c | n$ and $1 < c < n$. Moreover, $\langle \overline{c} \rangle =\langle \overline{a} \rangle$ (cf. \cite[Theorem 4.2]{Gallian}), and hence by \Cref{OrderDeg}, $\delta(\mathcal{G}(\mathbb{Z}_n))=\deg\left(\overline{c}\right)$.
\end{proof}

The next lemma is a consequence of \Cref{CompleteCond}(ii).

\begin{lemma}\label{MinDegpPower}
For a finite group $G$, $\delta(\mathcal{G}(G))=|G|-1$ if and only if $G$ is a cyclic group of order $1$ or $p^\alpha$ for some prime number $p$ and $\alpha \in \mathbb{N}$.
\end{lemma}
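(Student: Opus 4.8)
The plan is to reduce the statement to the completeness criterion in \Cref{CompleteCond}(ii) via the elementary observation that, for a simple graph, having minimum degree equal to one less than the number of vertices is equivalent to being complete. Since $\mathcal{G}(G)$ is a simple graph on $|G|$ vertices, every vertex satisfies $\deg(v) \leq |G|-1$, and the minimum degree $\delta(\mathcal{G}(G))$ is by definition the smallest such degree. The key point to record is therefore that $\delta(\mathcal{G}(G)) = |G|-1$ forces every vertex to attain the maximum possible degree $|G|-1$.

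First I would prove the forward implication. Assuming $\delta(\mathcal{G}(G)) = |G|-1$, I would argue that since $\deg(v) \leq |G|-1$ for all $v \in G$ while the minimum over all $v$ already equals $|G|-1$, we must have $\deg(v) = |G|-1$ for every $v$. This means each vertex is adjacent to all of the other $|G|-1$ vertices, so $\mathcal{G}(G)$ is complete. Applying \Cref{CompleteCond}(ii) then immediately yields that $G$ is a cyclic group of order $1$ or $p^\alpha$ for some prime $p$ and some $\alpha \in \mathbb{N}$.

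For the converse, I would simply run the same chain of equivalences in reverse: if $G$ is cyclic of order $1$ or $p^\alpha$, then \Cref{CompleteCond}(ii) guarantees $\mathcal{G}(G)$ is complete, whence every vertex has degree $|G|-1$ and in particular $\delta(\mathcal{G}(G)) = |G|-1$.

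There is no substantive obstacle here; the statement is essentially a restatement of \Cref{CompleteCond}(ii) in the language of minimum degree. The only thing to be careful about is justifying the step ``minimum degree $=|G|-1$ implies completeness'' cleanly, which rests on the trivial upper bound $\deg(v)\leq |G|-1$ valid in any simple graph on $|G|$ vertices. Once that bridge is in place, both directions follow directly, so the proof will be a short two-line argument invoking the cited lemma.
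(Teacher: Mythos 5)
Your proof is correct and is exactly the argument the paper intends: the paper states this lemma as an immediate consequence of \Cref{CompleteCond}(ii), relying on the same observation that $\delta(\mathcal{G}(G))=|G|-1$ is equivalent to completeness of the (simple) graph $\mathcal{G}(G)$. Your write-up simply makes that bridge explicit, so there is nothing to add.
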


\begin{theorem}\label{MinDegBasic}
For $n \in \mathbb{N}$, we have the following:
\begin{enumerate}[\rm(i)]
\item If $n$ is composite, then $\delta(\mathcal{G}(\mathbb{Z}_n))=\phi(n)+1+\delta(\mathcal{\widetilde{G}}(\mathbb{Z}_n))$. Consequently, $\delta(\mathcal{G}(\mathbb{Z}_n)) \geq \phi(n)+1$.
\item $\delta(\mathcal{G}(\mathbb{Z}_n)) = \phi(n)+1$ if and only if $n=2p$ for some prime $p\geq 3$.
\end{enumerate}
\end{theorem}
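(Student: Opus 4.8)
The plan is to prove (i) by separating, for a vertex $\overline a\in\widetilde{\mathbb Z}_n$, its neighbours lying in $\mathcal S(\mathbb Z_n)$ from those lying in $\widetilde{\mathbb Z}_n$, and to prove (ii) by turning the equality into a statement about an isolated vertex of $\widetilde{\mathcal G}(\mathbb Z_n)$.

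For (i), I would first record that $|\mathcal S(\mathbb Z_n)|=\phi(n)+1$, since $\mathcal S(\mathbb Z_n)$ consists of $\overline 0$ together with the $\phi(n)$ generators of $\mathbb Z_n$. By \Cref{szn-adj} each element of $\mathcal S(\mathbb Z_n)$ is adjacent to every other vertex, so for any $\overline a\in\widetilde{\mathbb Z}_n$ the whole of $\mathcal S(\mathbb Z_n)$ is contained in $N(\overline a)$, while the remaining neighbours of $\overline a$ are exactly its neighbours inside the induced subgraph $\widetilde{\mathcal G}(\mathbb Z_n)$. Hence
$$\deg_{\mathcal G(\mathbb Z_n)}(\overline a)=\bigl(\phi(n)+1\bigr)+\deg_{\widetilde{\mathcal G}(\mathbb Z_n)}(\overline a)\qquad\text{for all }\overline a\in\widetilde{\mathbb Z}_n.$$
Since $n$ is composite, $\widetilde{\mathbb Z}_n\neq\emptyset$, and by \Cref{MinDegVertex} the minimum degree of $\mathcal G(\mathbb Z_n)$ is attained at a vertex of $\widetilde{\mathbb Z}_n$. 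Taking the minimum of the displayed identity over $\overline a\in\widetilde{\mathbb Z}_n$ gives $\delta(\mathcal G(\mathbb Z_n))=\phi(n)+1+\delta(\widetilde{\mathcal G}(\mathbb Z_n))$, and the bound $\delta(\mathcal G(\mathbb Z_n))\geq\phi(n)+1$ follows because $\delta(\widetilde{\mathcal G}(\mathbb Z_n))\geq 0$.

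For (ii), the identity in (i) shows that $\delta(\mathcal G(\mathbb Z_n))=\phi(n)+1$ is equivalent to $\delta(\widetilde{\mathcal G}(\mathbb Z_n))=0$, that is, to $\widetilde{\mathcal G}(\mathbb Z_n)$ possessing a vertex of degree $0$. For the ``if'' direction, take $n=2p$ with $p\geq 3$ prime and consider the unique involution $\overline p$; its powers are $\overline 0$ and $\overline p$, and the only elements that have $\overline p$ as a power are the generators, since the involution lies only in the subgroups of order $2$ and $2p$. Thus every neighbour of $\overline p$ in $\mathcal G(\mathbb Z_{2p})$ belongs to $\mathcal S(\mathbb Z_{2p})$, so $\overline p$ is isolated in $\widetilde{\mathcal G}(\mathbb Z_{2p})$ and $\delta(\widetilde{\mathcal G}(\mathbb Z_{2p}))=0$.

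For the ``only if'' direction I would first dispose of the degenerate cases: for $n=1$ and for $n$ prime (and, more generally, for $n$ a prime power) the graph $\mathcal G(\mathbb Z_n)$ is complete by \Cref{CompleteCond}, and a direct comparison of $n-1$ with $\phi(n)+1$ settles these. Assuming then that $n$ is composite and not a prime power, a degree-$0$ vertex makes $\widetilde{\mathcal G}(\mathbb Z_n)$ disconnected, so \Cref{SepSetLemma} forces $n=p_1p_2$ for distinct primes $p_1<p_2$. I would then describe $\widetilde{\mathcal G}(\mathbb Z_{p_1p_2})$ explicitly as the disjoint union of the clique on the $\phi(p_1)$ elements of order $p_1$ and the clique on the $\phi(p_2)$ elements of order $p_2$ (there being no edges between elements of coprime orders), and observe that an isolated vertex can occur only when one of these cliques is a single vertex, i.e. when $\phi(p_1)=1$; as $p_1<p_2$, this forces $p_1=2$ and hence $n=2p_2$. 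I expect this last structural step to be the main obstacle: identifying the two components of $\widetilde{\mathcal G}(\mathbb Z_{p_1p_2})$ precisely and confirming that the only way to produce a degree-$0$ vertex is the involution arising when $p_1=2$, so that $n=2p$ with $p\geq 3$ is genuinely the only possibility.
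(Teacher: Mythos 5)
Your proof of (i) and the main structural part of (ii) follow the paper's own argument: the degree identity $\deg_{\mathcal G(\mathbb Z_n)}(\overline a)=\phi(n)+1+\deg_{\widetilde{\mathcal G}(\mathbb Z_n)}(\overline a)$ for $\overline a\in\widetilde{\mathbb Z}_n$ combined with \Cref{MinDegVertex}, the reduction of the equality in (ii) to $\delta(\widetilde{\mathcal G}(\mathbb Z_n))=0$, the appeal to \Cref{SepSetLemma}, and the description of $\widetilde{\mathcal G}(\mathbb Z_{p_1p_2})$ as a disjoint union of cliques of sizes $p_1-1$ and $p_2-1$ are all exactly what the paper does, and those steps are correct.

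The genuine gap is your one-line disposal of the prime powers: ``a direct comparison of $n-1$ with $\phi(n)+1$ settles these'' is false. Carry out the comparison: for $n=p^\alpha$ the power graph is complete, so $\delta(\mathcal G(\mathbb Z_n))=n-1$, and $n-1=\phi(n)+1$ holds iff $n-\phi(n)=2$, i.e.\ iff $p^{\alpha-1}=2$, i.e.\ iff $n=4$. Indeed $\mathcal G(\mathbb Z_4)\cong K_4$ has $\delta=3=\phi(4)+1$, yet $4$ is not of the form $2p$ with $p\geq 3$. So $n=4$ is an outright counterexample to the ``only if'' direction of (ii) as stated: your step is not a repairable oversight but the precise point where the claim breaks, and no argument can close it. (Interestingly, the paper's own proof fails on the same number, at a different step: from $\delta(\widetilde{\mathcal G}(\mathbb Z_n))=0$ it infers that $\widetilde{\mathcal G}(\mathbb Z_n)$ is disconnected, which is wrong when $\widetilde{\mathcal G}(\mathbb Z_n)$ is the one-vertex trivial graph --- connected by the paper's own conventions and by the statement of \Cref{SepSetLemma} itself --- and that happens exactly when $n=4$.) What your argument actually proves, once the prime-power comparison is done honestly, is: $\delta(\mathcal G(\mathbb Z_n))=\phi(n)+1$ if and only if $n=2p$ for some prime $p$, equivalently iff $n=4$ or $n=2p$ with $p\geq 3$ prime.
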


\begin{proof}

\noindent
(i)  Let $n \in \mathbb{N}$ be a composite number. In view of \Cref{MinDegVertex}, it is enough to consider only elements of $\widetilde{\mathbb{Z}}_n$. Any $\overline{a} \in \widetilde{\mathbb{Z}}_n$ is adjacent to all elements of $\mathcal{S}(\mathbb{Z}_n)$. Since $|\mathcal{S}(\mathbb{Z}_n)| = \phi(n)+1$, we have $\deg_{\mathcal{G}(\mathbb{Z}_n)}(\overline{a})=\deg_{\mathcal{\widetilde{G}}(\mathbb{Z}_n)}(\overline{a})+ \phi(n)+1$. Thus the proof follows.

\noindent
(ii) Let $n=2p$ for some prime $p \geq 3$. Then by \Cref{SepSetLemma}, $\mathcal{\widetilde{G}}(\mathbb{Z}_n)$ is disconnected, and its component induced by $\langle \overline{p} \rangle^*$ has $\overline{p}$ as its only vertex. Then $\delta(\mathcal{\widetilde{G}}(\mathbb{Z}_n))=0$, and hence by (i), $\delta(\mathcal{G}(\mathbb{Z}_n)) = \phi(n)+1$.

Conversely, let $\delta(\mathcal{G}(\mathbb{Z}_n)) = \phi(n)+1$. If $n$ is prime, then $\delta(\mathcal{G}(\mathbb{Z}_n))=n-1 \neq \phi(n)+1$. So $n$ is a composite number. Then by (i), $\delta(\mathcal{\widetilde{G}}(\mathbb{Z}_n))=0$, and hence $\mathcal{\widetilde{G}}(\mathbb{Z}_n)$ is disconnected. Accordingly, by \Cref{SepSetLemma}, $n$ is a product of two distinct primes; say $n=pq$. It is easy to see that subgraphs induced by $\langle p \rangle^*$ and $\langle q \rangle^*$ are the only components of $\mathcal{\widetilde{G}}(\mathbb{Z}_n)$. If $p,q \geq 3$, then $|\langle p \rangle^*|,|\langle q \rangle^*| \geq 2$, so that $\delta(\mathcal{\widetilde{G}}(\mathbb{Z}_n))\geq 1$; a contradiction. So exactly one of $p$ or $q$ is $2$, say $q=2$. Hence, $n=2p$.
\end{proof}

 We now obtain certain relations between the degrees of vertices of $\mathcal{G}(\mathbb{Z}_n)$.

\begin{proposition}\label{degcompare}
 Let $n=p_1^{\alpha_1}p_2^{\alpha_2}\ldots p_r^{\alpha_r}$, $r \geq 2$, $p_1 < p_2 < \cdots < p_r$ are primes and $\alpha_i \in \mathbb{N}$ for $1 \leq i \leq r$. Then the following inequalities hold in $\mathcal{G}(\mathbb{Z}_n)$:
 \begin{enumerate}[\rm(i)]
 \item $\deg\left(\overline{p_1^{\alpha_1}}\right) \geq \deg\left(\overline{p_r^{\alpha_r}}\right)$.
 \item $\deg\left(\overline{p_i^{\gamma}}\right) \geq \deg\left(\overline{p_i^{\beta}}\right)$ for all $1 \leq i \leq r$ and $1 \leq \gamma < \beta \leq \alpha_i$.
  \item $\deg\left(\overline{p_i^\beta}\right) \geq \deg\left(\overline{p_j^\beta}\right)$ for all $1 \leq i < j \leq r$ and $1 \leq \beta \leq \min \{\alpha_i, \alpha_j\}$.
  \item $\deg\left(\overline{p_1^{\beta_1}p_2^{\beta_2}\ldots p_r^{\beta_r}}\right) \geq \deg\left(\overline{p_2^{\beta_2}\ldots p_r^{\beta_r}}\right)$ for $\displaystyle\sum_{i=1}^r \beta_i < \sum_{i=1}^r \alpha_i$, where $1 \leq \beta_i \leq \alpha_i$ for all $1 \leq i \leq r$.
 \end{enumerate}
\end{proposition}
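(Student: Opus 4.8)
The plan is to reduce every comparison to the explicit degree formula of \Cref{verdeg}, which applies throughout since $r\ge 2$ means $n$ is not a prime power. Each element occurring in (i)--(iv) has the form $\overline{t}$ for a divisor $t\mid n$ with $1<t<n$, hence lies in $\widetilde{\mathbb{Z}}_n$; because $\gcd(t,n)=t$ and, by \Cref{OrderDeg}, the degree depends only on the generated cyclic subgroup, I may use $\deg(\overline{t})=\frac{n}{t}+\sum_{d\mid t,\,d\neq t}\phi(n/d)-1$ for every such $t$. I set $g(t):=\deg(\overline{t})+1=D(t)+U(t)$, where $D(t)=n/t=\lvert\langle\overline{t}\rangle\rvert$ and $U(t)=\sum_{d\mid t,\,d\neq t}\phi(n/d)$ counts the elements $\overline{x}$ with $\langle\overline{t}\rangle\subsetneq\langle\overline{x}\rangle$, and I record the simplification $\phi(n/p_i^{k})=\phi(n)/p_i^{k}$ valid for $0\le k<\alpha_i$. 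Comparing $g$-values is equivalent to comparing degrees.

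Parts (iii) and (ii) are the easy cases. For (iii), with $t=p_i^{\beta}$, $t'=p_j^{\beta}$, $p_i<p_j$ and $\beta\le\min\{\alpha_i,\alpha_j\}$, both pieces dominate termwise: $D(t)=n/p_i^{\beta}>n/p_j^{\beta}=D(t')$ and $U(t)=\phi(n)\sum_{k=0}^{\beta-1}p_i^{-k}\ge\phi(n)\sum_{k=0}^{\beta-1}p_j^{-k}=U(t')$, so $g(t)>g(t')$. For (ii), with $t=p_i^{\gamma}$, $t'=p_i^{\beta}$ and $\gamma<\beta\le\alpha_i$, the two pieces move oppositely; writing $D(t)-D(t')$ and $U(t')-U(t)$ in closed form, they share the common positive factor $p_i^{-\gamma}\bigl(1-p_i^{-(\beta-\gamma)}\bigr)$, and after cancelling it the required inequality $D(t)-D(t')\ge U(t')-U(t)$ collapses to $n\ge\phi(n)\frac{p_i}{p_i-1}=n\prod_{l\neq i}\frac{p_l-1}{p_l}$, which holds strictly because $r\ge 2$ forces at least one factor below $1$.

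For (i) I first establish the closed form $g(p_i^{\alpha_i})=nQ_i+\frac{n(1-Q_i)}{p_i^{\alpha_i}}$, where $Q_i=\prod_{l\neq i}\frac{p_l-1}{p_l}$ and $nQ_i=\phi(n)\frac{p_i}{p_i-1}$, obtained from the geometric sum for $U$ together with the simplification above. Discarding the positive term $\frac{n(1-Q_1)}{p_1^{\alpha_1}}$ and using $p_r^{\alpha_r}\ge p_r$ gives $g(p_1^{\alpha_1})-g(p_r^{\alpha_r})\ge n\bigl[(Q_1-Q_r)-\tfrac{1-Q_r}{p_r}\bigr]$. Writing $Q_1=R\frac{p_r-1}{p_r}$ and $Q_r=R\frac{p_1-1}{p_1}$ with $R=\prod_{l\neq 1,r}\frac{p_l-1}{p_l}$, the bracket is nonnegative exactly when $R(p_r-1)\ge p_1$, which I prove by induction on the number of primes exceeding $p_1$: peeling off the largest one costs a factor $\frac{p_r-1}{p_{r-1}}\ge 1$ (since $p_r>p_{r-1}$ gives $p_r-1\ge p_{r-1}$), and the induction bottoms out at $p_2-1\ge p_1$.

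Part (iv) is the main obstacle. Comparing $g(m)$ and $g(m')$ with $m=p_1^{\beta_1}m'$ and $m'=p_2^{\beta_2}\cdots p_r^{\beta_r}$ (here $\sum_i\beta_i<\sum_i\alpha_i$ guarantees $m<n$), the pieces $D,U$ again oppose and no termwise bound survives, so I pass to the multiplicative form $A(t):=\sum_{d\mid t}\phi(n/d)=\prod_i S_i(\beta_i)$ with $S_i(\beta_i)=\sum_{k=0}^{\beta_i}\phi(p_i^{\alpha_i-k})$, which telescopes to $p_i^{\alpha_i}-p_i^{\alpha_i-\beta_i-1}$ when $\beta_i<\alpha_i$ and to $p_i^{\alpha_i}$ when $\beta_i=\alpha_i$. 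Using $g(t)=\frac{n}{t}+A(t)-\phi(n/t)$ and the fact that $m,m'$ share all factors with index $i\ge 2$, I expand $g(m)-g(m')$, factor out the positive quantity $\prod_{i\ge 2}p_i^{\alpha_i-\beta_i}$, and find that in each case the sign is governed by the single inequality $\prod_{i\ge 2}c_i>p_1$, where $c_i:=S_i(\beta_i)/p_i^{\alpha_i-\beta_i}$. The engine is elementary: each $c_i\ge 1$ (equivalently $S_i(\beta_i)\ge p_i^{\alpha_i-\beta_i}$, reducing to $p_i^{\beta_i+1}\ge p_i+1$, and trivial when $\beta_i=\alpha_i$), while the single factor satisfies $c_2\ge p_2-\frac{1}{p_2}>p_2-1\ge p_1$, so $\prod_{i\ge 2}c_i\ge c_2>p_1$. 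The genuinely delicate part is the bookkeeping in this expansion: one must separate the subcases $\beta_1<\alpha_1$ and $\beta_1=\alpha_1$, verifying that the $p_1$-dependent terms assemble into the clean factor $p_1^{\beta_1}-1$ in the former and into a slightly different combination in the latter, both of which reduce to the same inequality $\prod_{i\ge 2}c_i>p_1$.
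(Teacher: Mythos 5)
Your proposal is correct and follows essentially the same route as the paper: every comparison is reduced to the explicit degree formula of \Cref{verdeg}, with the same case splits ($r=2$ versus $r>2$ in (i), and $\beta_1=\alpha_1$ versus $\beta_1<\alpha_1$ in (iv)) and the same crucial number-theoretic input $p_{i+1}-1\ge p_i$; indeed your reduction of (i) to $R(p_r-1)\ge p_1$ is exactly the paper's final inequality $\prod_{i=2}^{r}(p_i-1)\ge\prod_{i=1}^{r-1}p_i$, and your reduction of (iv) to $\prod_{i\ge 2}c_i>p_1$ plays the role of the paper's $\tau\ge 0$. The only difference is organizational -- you package the divisor sums multiplicatively (closed forms $Q_i$, $S_i(\beta_i)$, $c_i$) where the paper runs chains of inequalities on the sums -- and the bookkeeping you sketch in (iv) does assemble into the claimed factor $p_1^{\beta_1}-1$ (respectively the analogous combination when $\beta_1=\alpha_1$), so there is no gap.
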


\begin{proof}

\noindent
(i) Let $m=\dfrac{n}{p_1^{\alpha_1}p_2^{\alpha_2}}$.
    \begin{align*}
    & \deg\left(\overline{p_1^{\alpha_1}}\right) - \deg\left(\overline{p_r^{\alpha_r}}\right)\\
     & = \frac{n}{p_1^{\alpha_1}}+\sum \limits_{k=0}^{\alpha_1-1}\phi \left( \frac{n}{p_1^{k}}\right)  - \left \{\frac{n}{p_r^{\alpha_r}} +\sum \limits_{k=0}^{\alpha_r-1}\phi \left( \frac{n}{p_r^{k}}\right) \right \}\\
     & = \phi (m) \left [\phi(p_r^{\alpha_r}) \sum \limits_{k=0}^{\alpha_1-1}\phi \left( p_1^{\alpha_1-k}\right) - \phi(p_1^{\alpha_1}) \sum \limits_{k=0}^{\alpha_r-1}\phi \left( p_r^{\alpha_r-k}\right) \right ] \\
     & \qquad\qquad   + m \left(p_r^{\alpha_r}-p_1^{\alpha_1}\right) \\
     \end{align*}

     \begin{align*}
     & = \phi(m) \left [(p_r^{\alpha_r}-p_r^{\alpha_r-1})(p_1^{\alpha_1}-1)  - (p_1^{\alpha_1}-p_1^{\alpha_1-1}) (p_r^{\alpha_r}-1) \right ]\\
     & \qquad\qquad   + m\{p_1^{\alpha_1}(p_r^{\alpha_r}-1)- p_r^{\alpha_r}(p_1^{\alpha_1}-1)\}\\
      & = (p_r^{\alpha_r}-1)\left [  p_1^{\alpha_1-1}\phi(m) +p_1^{\alpha_1} \{m-\phi(m)\} \right ]\\
        & \qquad\qquad  - (p_1^{\alpha_1}-1) \left [ p_r^{\alpha_r-1}\phi(m) + p_r^{\alpha_r}\{m-\phi(m)\} \right ]\\
           & \geq (p_r^{\alpha_r}-1)\left [  p_1^{\alpha_1-1}\phi(m) +p_1^{\alpha_1} \{m-\phi(m)\} \right ]\\
        & \qquad\qquad  - p_1^{\alpha_1} \left [ p_r^{\alpha_r-1}\phi(m) + p_r^{\alpha_r}\{m-\phi(m)\} \right ]\\
      & = (p_r^{\alpha_r}p_1^{\alpha_1-1}-p_1^{\alpha_1} p_r^{\alpha_r-1})\phi(m) - \left [  p_1^{\alpha_1-1}\phi(m) +p_1^{\alpha_1} \{m-\phi(m)\} \right ] \\
      & = p_1^{\alpha_1-1}\left [p_r^{\alpha_r-1}(p_r-p_1)\phi(m) - \left [\phi(m) +p_1 \{m-\phi(m)\} \right ] \right ]\\
      & \geq p_1^{\alpha_1-1}\left [(p_r-p_1)\phi(m) - \left [\phi(m) +p_1 \{m-\phi(m)\} \right ] \right ]\\
      & =p_1^{\alpha_1-1} \left \{(p_r-1)\phi(m) - p_1m\right \} \numberthis \label{DegEqZn}
    \end{align*}

Now, if $r=2$, then $m=\dfrac{n}{p_1^{\alpha_1}p_2^{\alpha_2}}=1$. Hence, from \eqref{DegEqZn}, \[\deg\left(\overline{p_1^{\alpha_1}}\right) - \deg\left(\overline{p_r^{\alpha_r}}\right) \geq p_1^{\alpha_1-1} \left(p_r-1 - p_1\right) \geq 0.\]

If $r>2$, then $m=\dfrac{n}{p_1^{\alpha_1}p_r^{\alpha_r}}=\displaystyle\prod_{i=2}^{r-1}p_i^{\alpha_i}$.
Hence, from \eqref{DegEqZn},
\begin{align*}
 \deg\left(\overline{p_1^{\alpha_1}}\right) - \deg\left(\overline{p_r^{\alpha_r}}\right) \geq & \prod_{i=1}^{r-1}p_i^{\alpha_i-1} \left\{\prod_{i=2}^{r}(p_i-1) - \prod_{i=1}^{r-1}p_i \right \} \geq 0,\\
& \text{ since } p_{i+1}-1 \geq p_i \text{ for all } 1 \leq i \leq r-1.
\end{align*}

\noindent
(ii)
$\deg\left(\overline{p_i^{\gamma}}\right) - \deg\left(\overline{p_i^{\beta}}\right)$
\begin{align*}
& = \frac{n}{p_i^{\gamma}} + \sum \limits_{k=0}^{\gamma-1}\phi \left( \frac{n}{p_i^{k}}\right) - \frac{n}{p_i^{\beta}} - \sum \limits_{k= 0}^{\beta-1}\phi \left( \frac{n}{p_i^{k}}\right) \\
  & = \frac{n}{p_i^{\gamma}}- \frac{n}{p_i^{\beta}} - \sum \limits_{k= \gamma}^{\beta-1}\phi \left( \frac{n}{p_i^{k}}\right) \\
  & = \frac{n}{p_i^{\alpha_i}} \left( p_i^{\alpha_i-\gamma}- p_i^{\alpha_i-\beta} \right) - \phi\left(\frac{n}{p_i^{\alpha_i}}\right) \sum \limits_{k=\gamma}^{\beta-1}\phi \left( p_i^{\alpha_i-k}\right) \\
  & = \frac{n}{p_i^{\alpha_i}} \left( p_i^{\alpha_i-\gamma}- p_i^{\alpha_i-\beta} \right) - \phi\left(\frac{n}{p_i^{\alpha_i}}\right) \left( p_i^{\alpha_i-\gamma}- p_i^{\alpha_i-\beta} \right)\\
  & = \left( p_i^{\alpha_i-\gamma}- p_i^{\alpha_i-\beta} \right) \left \{\frac{n}{p_i^{\alpha_i}} - \phi\left(\frac{n}{p_i^{\alpha_i}}\right)\right \} \geq 0 \hspace{5pt} (\text{since } \gamma < \beta)
\end{align*}

\noindent
(iii) $\deg\left(\overline{p_i^\beta}\right) - \deg\left(\overline{p_j^\beta}\right) = \dfrac{n}{p_i^\beta}- \dfrac{n}{p_j^\beta} + \sum \limits_{k=0}^{\beta-1} \left \{\phi \left( \dfrac{n}{p_i^{k}}\right) - \phi \left( \dfrac{n}{p_j^k}\right) \right \}.$

Since $p_i<p_j$, we have $\dfrac{n}{p_i^{\beta}} > \dfrac{n}{p_j^{\beta}}$. Further $\alpha_i, \alpha_j \geq \beta$, so for all $0 \leq k \leq \beta-1$, we have $\phi \left( \dfrac{n}{p_i^{k}}\right)- \phi \left( \dfrac{n}{p_j^{k}}\right)=  \left( \dfrac{n}{p_i^{k}}- \dfrac{n}{p_j^{k}}\right) \prod \limits_{l=1}^{r} \left(1- \dfrac{1}{p_l}\right) \geq 0$. Thus the proof follows.

\noindent
(iv) $\deg\left(\overline{p_1^{\beta_1}p_2^{\beta_2}\ldots p_r^{\beta_r}}\right) - \deg\left(\overline{p_2^{\beta_2}\ldots p_r^{\beta_r}}\right)$
\begin{align*}
& = \frac{n}{p_1^{\beta_1}p_2^{\beta_2}\ldots p_r^{\beta_r}} + \sum \limits_{\substack{ 0 \leq \sum \limits_{i=1}^r \gamma_i < \sum \limits_{i=1}^r \beta_i ,\\ 0 \leq \gamma_i \leq \beta_i \forall 1 \leq i \leq r}}\phi \left( \frac{n}{p_1^{\gamma_1}p_2^{\gamma_2}\ldots p_r^{\gamma_r}}\right)\\
&\quad\quad\quad\quad -  \left \{ \frac{n}{p_2^{\beta_2}\ldots p_r^{\beta_r}} + \sum \limits_{\substack{ 0 \leq \sum \limits_{i=2}^r \gamma_i < \sum \limits_{i=2}^r \beta_i,\\ 0 \leq \gamma_i \leq \beta_i \forall 2 \leq i \leq r}}\phi \left( \frac{n}{p_2^{\gamma_2}\ldots p_r^{\gamma_r}}\right) \right \} \\
 & = \frac{n}{p_1^{\beta_1}p_2^{\beta_2}\ldots p_r^{\beta_r}} + \sum \limits_{0 \leq \gamma_i \leq \beta_i \forall 1 \leq i \leq r }\phi \left( \frac{n}{p_1^{\gamma_1}p_2^{\gamma_2}\ldots p_r^{\gamma_r}}\right) -\phi \left( \frac{n}{p_1^{\beta_1}p_2^{\beta_2}\ldots p_r^{\beta_r}}\right) \\
 &\quad\quad\quad\quad - \left \{ \frac{n}{p_2^{\beta_2}\ldots p_r^{\beta_r}} + \sum \limits_{0 \leq \gamma_i \leq \beta_i \forall 2 \leq i \leq r }\phi \left( \frac{n}{p_2^{\gamma_2}\ldots p_r^{\gamma_r}}\right) - \phi \left( \frac{n}{p_2^{\beta_2}\ldots p_r^{\beta_r}}\right) \right \}\\
  \end{align*}
 \begin{align*}
 & = \sum \limits_{\substack{1 \leq \gamma_1 \leq \beta_1,\\ 0 \leq \gamma_i \leq \beta_i \forall 2 \leq i \leq r}}\phi \left( \frac{n}{p_1^{\gamma_1}p_2^{\gamma_2}\ldots p_r^{\gamma_r}}\right)+  \phi(m) \left \{\phi (p_1^{\alpha_1})-\phi (p_1^{\alpha_1-\beta_1})\right \} \\
 & \quad\quad\quad\quad + m \left(p_1^{\alpha_1-\beta_1}-p_1^{\alpha_1}\right) \left ( \text{taking } m =p_2^{\alpha_2-\beta_2}\ldots p_r^{\alpha_r-\beta_r} \right)\\
 & \geq  \left \{ \sum \limits_{0 \leq \gamma_i \leq \beta_i \forall 2 \leq i \leq r} \phi\left(p_2^{\alpha_2-\gamma_2}\ldots p_r^{\alpha_r-\gamma_r}\right) \right \}\sum \limits_{1 \leq \gamma_1 \leq \beta_1} \phi \left(p_1^{\alpha_1-\gamma_1}\right) +m \left(p_1^{\alpha_1-\beta_1}-p_1^{\alpha_1}\right)\\
& \geq   \phi\left(p_2^{\alpha_2}\ldots p_r^{\alpha_r}\right)\sum \limits_{1 \leq \gamma_1 \leq \beta_1} \phi \left(p_1^{\alpha_1-\gamma_1}\right) +m \left(p_1^{\alpha_1-\beta_1}-p_1^{\alpha_1}\right)\\
&= \prod_{i=2}^r \left \{ p_i^{\alpha_i-1}(p_i-1) \right \} \sum \limits_{1 \leq \gamma_1 \leq \beta_1} \phi \left(p_1^{\alpha_1-\gamma_1}\right) - m \left(p_1^{\alpha_1}-p_1^{\alpha_1-\beta_1}\right)\\
&\geq m \left \{ \prod_{i=2}^r(p_i-1)\sum \limits_{1 \leq \gamma_1 \leq \beta_1} \phi \left(p_1^{\alpha_1-\gamma_1}\right) - \left(p_1^{\alpha_1}-p_1^{\alpha_1-\beta_1}\right)\right \}
\end{align*}

Thus it is enough to show that \[\tau := \prod_{i=2}^r(p_i-1)\sum \limits_{1 \leq \gamma_1 \leq \beta_1} \phi \left(p_1^{\alpha_1-\gamma_1}\right) - \left(p_1^{\alpha_1}-p_1^{\alpha_1-\beta_1}\right) \geq 0.\]

We have $\beta_1 \leq \alpha_1$. First take $\alpha_1=\beta_1$. Then $\sum \limits_{1 \leq \gamma_1 \leq \beta_1} \phi \left(p_1^{\alpha_1-\gamma_1}\right)=p_1^{\alpha_1-1}$, and we have

\begin{align*}
\tau & =\prod_{i=2}^r(p_i-1) p_1^{\alpha_1-1} - \left(p_1^{\alpha_1}-1\right)\\
& \geq (p_2-1)p_1^{\alpha_1-1} -p_1^{\alpha_1}+1\\
      & \geq p_1^{\alpha_1} -p_1^{\alpha_1}+1 > 0.\\
      \end{align*}

Now we take $\alpha_1 > \beta_1$. In this case, $\sum \limits_{1 \leq \gamma_1 \leq \beta_1} \phi \left(p_1^{\alpha_1-\gamma_1}\right)=p_1^{\alpha_1-1}-p_1^{\alpha_1-\beta_1-1}$. Hence,

\begin{align*}
 \tau &=(p_2-1)\ldots (p_r-1)\left(p_1^{\alpha_1-1}-p_1^{\alpha_1-\beta_1-1}\right) - \left(p_1^{\alpha_1}-p_1^{\alpha_1-\beta_1}\right)\\
 &=\left(p_1^{\alpha_1-1}-p_1^{\alpha_1-\beta_1-1}\right) \left \{(p_2-1)\ldots (p_r-1) - p_1 \right \} \geq 0.\\
\end{align*}

\end{proof}

\begin{theorem}\label{MinDegValue}
Let $n \in \mathbb{N}$ and $p_1 < p_2 < p_3 < p_4$ be prime numbers.
\begin{enumerate}[\rm(i)]
\item If $n=p_1^{\alpha_1} p_2^{\alpha_2}$, $\alpha_1, \alpha_2 \in \mathbb{N}$, then $\overline{p_2^{\alpha_2}}$ has the minimum degree among all vertices in $\mathcal{G}(\mathbb{Z}_n)$, and $\delta(\mathcal{G}(\mathbb{Z}_n)) = (p_2^{\alpha_2}-1) \phi(p_1^{\alpha_1})+ p_1^{\alpha_1} -1$.
\item If $n=p_1 p_2 p_3$, then $\overline{p_3}$ has the minimum degree among all vertices in $\mathcal{G}(\mathbb{Z}_n)$, and $\delta(\mathcal{G}(\mathbb{Z}_n)) = \phi(n) + p_1p_2 -1.$

\item Let $n = p_1 p_2 p_3p_4$. If $n$ is odd or $p_4 \geq p_3 + \displaystyle \frac{2(p_3-1)}{p_2-1}$, then $\overline{p_4}$ has the minimum degree among all vertices in $\mathcal{G}(\mathbb{Z}_n)$, and $\delta(\mathcal{G}(\mathbb{Z}_n)) = \phi(n) + p_1p_2p_3-1$. Otherwise, $\overline{p_3p_4}$ has the minimum degree among all vertices in $\mathcal{G}(\mathbb{Z}_n)$, and $\delta(\mathcal{G}(\mathbb{Z}_n)) = (p_2-1)(p_3p_4+1)+1$.
\end{enumerate}
\end{theorem}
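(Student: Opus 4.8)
The plan is to apply \Cref{MinDegVertex}, which tells us that $\delta(\mathcal{G}(\mathbb{Z}_n))$ is realised at $\overline{c}$ for some divisor $c$ of $n$ with $1<c<n$, and then to minimise $\deg(\overline{c})$ over all such divisors by combining \Cref{verdeg} with the comparisons of \Cref{degcompare}. For the squarefree cases (ii) and (iii) it will be convenient to record that, since $\phi(n/d)=\phi(n)\big/\prod_{p\mid d}(p-1)$ for every $d\mid n$, \Cref{verdeg} rewrites the degree of a divisor $c$ of $n$ as
\[
\deg(\overline{c})=\frac{n}{c}+\phi(n)\sum_{\substack{d\mid c,\ d\neq c}}\frac{1}{\prod_{p\mid d}(p-1)}-1,
\]
where $n/c$ is the product of those primes dividing $n$ but not $c$.

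For (i) I would show that among the proper divisors $c=p_1^{\beta_1}p_2^{\beta_2}$ the minimiser is forced onto $\overline{p_2^{\alpha_2}}$. If $\beta_1,\beta_2\geq 1$, then $c<n$ gives $\beta_1+\beta_2<\alpha_1+\alpha_2$, so \Cref{degcompare}(iv) yields $\deg(\overline{c})\geq\deg(\overline{p_2^{\beta_2}})$; hence it suffices to examine the pure prime powers $\overline{p_1^{\beta_1}}$ and $\overline{p_2^{\beta_2}}$. \Cref{degcompare}(ii) then pushes each family to its top power, and \Cref{degcompare}(i) selects $\overline{p_2^{\alpha_2}}$. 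A direct evaluation of \Cref{verdeg} at $b=p_2^{\alpha_2}$, using $\sum_{k=1}^{\alpha_2}\phi(p_2^{k})=p_2^{\alpha_2}-1$, then produces $(p_2^{\alpha_2}-1)\phi(p_1^{\alpha_1})+p_1^{\alpha_1}-1$.

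For (ii) and (iii) the organising idea is a \emph{monotonicity principle}: within the class of divisors having a fixed number $k$ of distinct prime factors, both $n/c$ and the reciprocal sum above are minimised simultaneously by taking $c$ to be the product of the $k$ largest prime factors of $n$, since that choice omits the smallest primes (shrinking $n/c$) and uses the largest primes (shrinking every reciprocal term). For $n=p_1p_2p_3$ this singles out $\overline{p_3}$ among single primes and $\overline{p_2p_3}$ among products of two, and a short computation with the displayed formula gives $\deg(\overline{p_2p_3})\geq\deg(\overline{p_3})$ (the new $\phi$-terms dominate the drop $p_1p_2\to p_1$ in $n/c$), so $\overline{p_3}$ wins and $\delta=\phi(n)+p_1p_2-1$. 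For $n=p_1p_2p_3p_4$ the principle leaves exactly three candidates, $\overline{p_4}$, $\overline{p_3p_4}$ and $\overline{p_2p_3p_4}$, and the same estimate shows $\overline{p_2p_3p_4}$ is always beaten by $\overline{p_3p_4}$ (the term $\phi(n)/(p_2-1)=(p_1-1)(p_3-1)(p_4-1)$ swamps the loss $p_1(p_2-1)$ in $n/c$), so the global minimum is $\min\{\deg(\overline{p_4}),\deg(\overline{p_3p_4})\}$.

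The decisive step is then comparing $\overline{p_4}$ with $\overline{p_3p_4}$. Setting $D=\deg(\overline{p_3p_4})-\deg(\overline{p_4})$, the displayed formula gives
\[
D=(p_1-1)(p_2-1)(p_3+p_4-2)-p_1p_2(p_3-1),
\]
so $\overline{p_4}$ is the minimiser exactly when $D\geq 0$. When $n$ is odd ($p_1\geq 3$) I would use $p_3+p_4-2>2(p_3-1)$ together with $2(p_1-1)(p_2-1)\geq p_1p_2$ (equivalently $(p_1-2)(p_2-2)\geq 2$) to get $D>0$ unconditionally. When $n$ is even ($p_1=2$), solving $D\geq 0$ for $p_4$ rearranges precisely to the stated threshold $p_4\geq p_3+2(p_3-1)/(p_2-1)$; in the complementary range $\overline{p_3p_4}$ wins, and substituting $p_1=2$ into $\deg(\overline{p_3p_4})$ collapses it to $(p_2-1)(p_3p_4+1)+1$. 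I expect the main obstacle to be arranging the casework so that the candidate divisors are provably exhaustive and so that the even/odd dichotomy emerges cleanly from the single inequality $D\geq 0$; the monotonicity principle is what keeps the divisor bookkeeping manageable.
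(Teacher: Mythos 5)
Your proposal is correct; part (i) follows the paper's own argument almost verbatim, but your treatment of the squarefree cases (ii) and (iii) is organized genuinely differently. The paper never isolates a monotonicity principle: it works pairwise from \Cref{verdeg}, proving $\deg\left(\overline{p_ip_jp_k}\right)\geq\deg\left(\overline{p_jp_k}\right)$ for all $i<j<k$ and $\deg\left(\overline{p_ip_j}\right)\geq\deg\left(\overline{p_j}\right)$ for all $i<j$, where the latter requires a permutation analysis over which omitted prime equals $2$ (subcases $p_i=p_2$ versus $p_i=p_3$), with the single failure $\deg\left(\overline{p_3p_4}\right)<\deg\left(\overline{p_4}\right)$ yielding the threshold in \eqref{MinDegEq5}. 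Your route instead rewrites \Cref{verdeg} for squarefree $n$ as $\deg(\overline{c})=\frac{n}{c}+\phi(n)\sum_{d\mid c,\, d\neq c}\frac{1}{\prod_{p\mid d}(p-1)}-1$ and observes that, level by level (fixed number of prime factors), both summands are simultaneously minimized by the product of the largest primes; this collapses the candidate list to $\overline{p_3},\overline{p_2p_3}$ (resp.\ $\overline{p_4},\overline{p_3p_4},\overline{p_2p_3p_4}$) and eliminates the permutation bookkeeping entirely. Your decisive quantity $D=(p_1-1)(p_2-1)(p_3+p_4-2)-p_1p_2(p_3-1)$ is algebraically identical to the paper's \eqref{MinDegEq3} specialized to $(i,j)=(3,4)$, $\{k,l\}=\{1,2\}$, and your derivation of the threshold $p_4\geq p_3+2(p_3-1)/(p_2-1)$ and of the value $(p_2-1)(p_3p_4+1)+1$ both check out, as do your cross-level comparisons (e.g.\ $\deg(\overline{p_2p_3p_4})-\deg(\overline{p_3p_4})\geq (p_1-1)(p_3-1)(p_4-1)-p_1(p_2-1)>0$). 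The trade-off: the paper's pairwise inequalities need no auxiliary lemma, only repeated computation from \Cref{verdeg}; your approach front-loads one principle that does require a short proof --- the order-preserving bijection between $k$-subsets of the prime factors under which $n/c$ shrinks and every reciprocal term $1/\prod_{p\mid d}(p-1)$ weakly decreases --- and in exchange reduces the entire case analysis of (iii) to the single inequality $D\geq 0$, from which the odd/even dichotomy falls out cleanly.
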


\begin{proof}
In view of \Cref{MinDegVertex}, in order to determine $\delta(\mathcal{G}(\mathbb{Z}_n))$, it is sufficient to compare degree of vertices $\overline{c}$, where $c > 1$ is a proper divisor of $n$.

\noindent
(i) Consider $\beta_1,\beta_2 \in \mathbb{N}$ with $1 \leq \beta_i \leq \alpha_i$ for $i=1,2$.

By \Cref{degcompare}(ii),(i), we have $\deg\left(\overline{p_1^{\beta_1}}\right) \geq \deg\left(\overline{p_1^{\alpha_1}}\right) \geq \deg\left(\overline{p_2^{\alpha_2}}\right)$.

By \Cref{degcompare}(ii), we have $\deg\left(\overline{p_2^{\beta_2}}\right) \geq \deg\left(\overline{p_2^{\alpha_2}}\right)$.

By \Cref{degcompare}(iv),(ii), we have $\deg\left(\overline{p_1^{\beta_1}p_2^{\beta_2}}\right) \geq \deg\left(\overline{p_2^{\beta_2}}\right) \geq \deg\left(\overline{p_2^{\alpha_2}}\right)$.

Thus $\overline{p_2^{\alpha_2}}$ has the minimum degree among all vertices in $\mathcal{G}(\mathbb{Z}_n)$, and by \Cref{verdeg}, $\delta(\mathcal{G}(\mathbb{Z}_n)) = (p_2^{\alpha_2}-1) \phi(p_1^{\alpha_1})+ p_1^{\alpha_1} -1$.\\

\noindent
(ii) Let $i,j,k$ be a permutation of $1,2,3$ with $i<j$.
\begin{align*}
\deg\left(\overline{p_ip_j}\right)-\deg\left(\overline{p_j}\right) &= p_k+\phi(p_ip_k)+\phi(p_jp_k)-p_ip_k\\
&= (p_i-1)(p_k-1)+(p_j-1)(p_k-1)-p_k(p_i-1)\\
&= (p_j-1)(p_k-1)-(p_i-1) \geq 0,  \text{ since } p_i < p_j.\\
\end{align*}

Further, by \Cref{degcompare}(iii), $\deg\left(\overline{p_1}\right)\geq \deg\left(\overline{p_2}\right) \geq \deg\left(\overline{p_3}\right)$. Hence $\overline{p_3}$ has the minimum degree among all vertices in $\mathcal{G}(\mathbb{Z}_n)$. Consequently, by \Cref{verdeg}, $\delta(\mathcal{G}(\mathbb{Z}_n)) = \deg\left(\overline{p_3}\right) = \phi(n)+p_1p_2-1.$\\

\noindent
(iii) Let $i,j,k,l$ be a permutation of $1,2,3,4$.\\

For $i<j<k$, we have\\

$\deg\left(\overline{p_ip_jp_k}\right) - \deg\left(\overline{p_jp_k}\right)$
\begin{align*}
& = p_l+\sum \limits_{\substack{d|p_ip_jp_k\\ d \neq p_ip_jp_k}}\phi \left( \frac{n}{d} \right)- \left \{p_ip_l+\sum \limits_{\substack{d|p_jp_k\\ d \neq p_jp_k}}\phi \left( \frac{n}{d} \right) \right \}\\
&= p_l+ \phi\left(\frac{n}{p_ip_j}\right)+\phi\left(\frac{n}{p_ip_k}\right)+ \phi\left(\frac{n}{p_jp_k}\right)+\phi\left(\frac{n}{p_i}\right) - p_ip_l\\
&= (p_l-1)\left \{(p_k-1)+(p_j-1)+(p_i-1)+(p_j-1)(p_k-1)\right \}-p_l(p_i-1)\\
&= (p_l-1)\left \{(p_k-1)+(p_j-1)+(p_j-1)(p_k-1)\right \}-(p_i-1)\\
&\geq (p_l-1)(p_j-1)-(p_i-1) \geq 0, \hspace{4pt} \text{ since } p_i<p_j.
\end{align*}

Now take $i<j$ with no condition on $k$ and $l$.
\begin{align*}
\deg\left(\overline{p_ip_j}\right) & -\deg\left(\overline{p_j}\right) = p_kp_l+\phi(p_ip_kp_l)+\phi(p_jp_kp_l)-p_ip_kp_l\\
&= (p_i-1)(p_k-1)(p_l-1)+(p_j-1)(p_k-1)(p_l-1)-p_lp_k(p_i-1)\\
&= (p_j-1)(p_k-1)(p_l-1)-(p_i-1)(p_k +p_l-1) \numberthis \label{MinDegEq3}\\
\end{align*}
Since $k$ and $l$ can be interchanged in \eqref{MinDegEq3}, without loss of generality, let $p_k < p_l$.

If $n$ is odd, then $p_k > 2$, and hence

\begin{align*}
&\deg\left(\overline{p_ip_j}\right)-\deg(\overline{p_j})\\
& \geq (p_i-1) \{(p_k-1)(p_l-1)-(p_k +p_l-1)\} \hspace{4pt} (\text{since } p_i<p_j)\\
& = (p_i-1) \{(p_k-2)(p_l-1)-p_k\} \\
& \geq (p_i-1) \{(p_l-1)-p_k\} \geq 0, \text{since } p_k > 2 \text{ and } p_k < p_l.
\end{align*}

Now let $n$ be even i.e. $p_1 = 2$. If $p_k > 2$, then from \eqref{MinDegEq3}, $\deg\left(\overline{p_ip_j}\right) \geq \deg\left(\overline{p_j}\right)$ as shown above.  Otherwise, $p_k = 2$, so that \eqref{MinDegEq3} becomes

\begin{equation}\label{MinDegEq4}
\deg\left(\overline{p_ip_j}\right)  -\deg\left(\overline{p_j}\right) = (p_j-1)(p_l-1)-(p_i-1)(p_l+1)
\end{equation}

In \eqref{MinDegEq4}, let $p_i \neq p_3$. Since $i < j$, $p_i$ cannot be $p_4$. Moreover, $p_k=p_1 =2$, so we have $p_i = p_2$. As a result, $p_l > p_2$. Then from \eqref{MinDegEq4},

\begin{align*}
\deg\left(\overline{p_ip_j}\right)  -\deg\left(\overline{p_j}\right) & =(p_l-1) \{ (p_j-1)-(p_2-1)\} - 2(p_2-1)\\
& \geq 2(p_l-1) - 2(p_2-1) \text{ (since } p_j -p_2 \geq 2)\\
& = 2(p_l- p_2) > 0  \text{ (since } p_l >p_2)
\end{align*}

Now take $p_i = p_3$ in \eqref{MinDegEq4}. Then $p_j = p_4$. We already have $p_k = 2$, and since $p_k < p_l$, we have $p_l=p_2$. Then from \eqref{MinDegEq4},\\
$\deg\left(\overline{p_ip_j}\right)  -\deg\left(\overline{p_j}\right)  = \deg\left(\overline{p_3p_4}\right)  -\deg\left(\overline{p_4}\right) = (p_4-1)(p_2-1)-(p_3-1)(p_2+1)$, and hence

\begin{equation}\label{MinDegEq5}
\deg\left(\overline{p_3p_4}\right) \geq \deg\left(\overline{p_4}\right) \Leftrightarrow p_4 \geq p_3 + \displaystyle \frac{2(p_3-1)}{p_2-1}.
\end{equation}

\noindent
\emph{Case 1:}  $n$ is odd or $p_4 \geq p_3 + \displaystyle \frac{2(p_3-1)}{p_2-1}$

As shown above, for all $1 \leq i<j<k \leq 4$,
\begin{equation}\label{Eq4prime}
\left (\overline{p_ip_jp_k}\right) \geq \deg\left(\overline{p_jp_k}\right) \geq \deg \left( \overline{p_k} \right).
\end{equation}
Further, it follows from \Cref{degcompare}(iii) that
\begin{equation}\label{Eq4prime2}
\deg\left(\overline{p_1}\right)\geq \deg\left(\overline{p_2}\right) \geq \deg\left(\overline{p_3}\right)\geq \deg\left(\overline{p_4}\right).
\end{equation}
 So we conclude that $\overline{p_4}$ has the minimum degree among all vertices in $\mathcal{G}(\mathbb{Z}_n)$. Consequently, by \Cref{verdeg}, $\delta(\mathcal{G}(\mathbb{Z}_n)) = \deg\left(\overline{p_4}\right) = \phi(n) + p_1p_2p_3-1$.

 \vspace{7pt}

\noindent
\emph{Case 2:} $n$ is even and $p_4 < p_3 + \displaystyle \frac{2(p_3-1)}{p_2-1}$

Then from \eqref{MinDegEq5}, $\deg\left(\overline{p_3p_4}\right) < \deg\left(\overline{p_4}\right)$, whereas all other inequalities in \eqref{Eq4prime}  and \eqref{Eq4prime2} hold. Thus $\overline{p_3p_4}$ has the minimum degree among all vertices in $\mathcal{G}(\mathbb{Z}_n)$. Thus by \Cref{verdeg}, $\delta(\mathcal{G}(\mathbb{Z}_n)) = (p_2-1)(p_3p_4+1)+1$.
\end{proof}

In view of \Cref{OrderDeg} and \Cref{MinDisconSet}, we have the following corollary of \Cref{MinDegValue}.

\begin{corollary}
Let $n \in \mathbb{N}$ and $p_1 < p_2 < p_3 < p_4$ be prime numbers.
\begin{enumerate}[\rm(i)]
\item If $n=p_1^{\alpha_1} p_2^{\alpha_2}$, $\alpha_1, \alpha_2 \in \mathbb{N}$, then for any $\overline{a} \in \left[ \overline{p_2^{\alpha_2}} \right]$,\\ $E \left[\overline{a}, \left \langle \overline{p_2^{\alpha_2}} \right \rangle \cup \bigcup \limits_{i=0}^{\alpha_2-1} \left[ \overline{p_2^i}\right]-\overline{a} \right]$ is a minimum disconnecting set of $\mathcal{G}(\mathbb{Z}_n)$.
\item If $n=p_1 p_2 p_3$, then for any $\overline{a} \in \left[ \overline{p_3} \right]$, $E \left[\overline{a}, \left \langle \overline{p_3} \right \rangle \cup \left [ \overline{1} \right ]-\overline{a} \right]$ is a minimum disconnecting set of $\mathcal{G}(\mathbb{Z}_n)$.

\item Let $n = p_1 p_2 p_3p_4$. If $n$ is odd or $p_4 \geq p_3 + \displaystyle \frac{2(p_3-1)}{p_2-1}$, then for any $\overline{a} \in \left[ \overline{p_4} \right]$, $E \left[\overline{a}, \left \langle \overline{p_4} \right \rangle \cup \left [ \overline{1} \right ]-\overline{a} \right]$ is a minimum disconnecting set of $\mathcal{G}(\mathbb{Z}_n)$. Otherwise, for any $\overline{b} \in \left[ \overline{p_3p_4} \right]$, $E \left[\overline{b}, \left \langle \overline{p_3p_4} \right \rangle \cup \left [ \overline{p_3} \right ] \cup \left [ \overline{p_4} \right ] \cup \left [ \overline{1} \right ] -\overline{b} \right]$ is a minimum disconnecting set of $\mathcal{G}(\mathbb{Z}_n)$.
\end{enumerate}
\end{corollary}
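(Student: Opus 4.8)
The plan is to combine \Cref{MinDegValue} with \Cref{OrderDeg} and \Cref{MinDisconSet}: the first tells us which vertex attains the minimum degree in each case, the second tells us that every generator of the cyclic subgroup it generates attains the same degree, and the third then turns any such minimum-degree vertex $\overline{a}$ into the minimum disconnecting set $E[\overline{a}, N(\overline{a})]$. Thus the only real work is to write out the neighbourhood $N(\overline{a})$ explicitly and check that it coincides with the set displayed in the statement after the deletion of $\overline{a}$.

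To this end I would first record the adjacency rule. In $\mathcal{G}(\mathbb{Z}_n)$ two distinct vertices $\overline{x}, \overline{y}$ are adjacent precisely when one lies in the cyclic subgroup generated by the other, and since $\langle \overline{x} \rangle$ is the unique subgroup of order $n/\gcd(x,n)$, the relation $\overline{x} \in \langle \overline{y} \rangle$ is the same as $\gcd(y,n) \mid \gcd(x,n)$. Writing $[\overline{e}]$ for the set of generators of $\langle \overline{e} \rangle$ and setting $d = \gcd(a,n)$, this splits the neighbourhood into the vertices sitting inside $\langle \overline{a} \rangle$ and the vertices whose subgroup contains $\langle \overline{a} \rangle$, giving
\[
N(\overline{a}) \cup \{\overline{a}\} = \langle \overline{a} \rangle \cup \bigcup_{e \mid d} [\overline{e}].
\]
Because the top term $[\overline{d}] = [\overline{a}]$ consists of generators of $\langle \overline{a} \rangle$ and hence already lies in $\langle \overline{a} \rangle$, only the classes $[\overline{e}]$ with $e$ a \emph{proper} divisor of $d$ contribute anything new.

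With this formula the proof becomes pure bookkeeping against \Cref{MinDegValue}. In part (i) the minimum is at $\overline{p_2^{\alpha_2}}$, so $d = p_2^{\alpha_2}$ and the proper divisors $p_2^0, \dots, p_2^{\alpha_2-1}$ produce exactly $\bigcup_{i=0}^{\alpha_2-1} [\overline{p_2^{i}}]$. In part (ii) (minimum at $\overline{p_3}$) and the first branch of part (iii) (minimum at $\overline{p_4}$) the generating divisor is prime, so the only proper divisor is $1$ and the extra contribution is $[\overline{1}]$. In the second branch of part (iii) the minimum is at $\overline{p_3 p_4}$, whose proper divisors $1, p_3, p_4$ contribute $[\overline{1}] \cup [\overline{p_3}] \cup [\overline{p_4}]$. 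In each case $N(\overline{a})$ is the displayed set with $\overline{a}$ (resp. $\overline{b}$) removed, and \Cref{OrderDeg} guarantees that $\delta(\mathcal{G}(\mathbb{Z}_n)) = \deg(\overline{a})$ persists for every $\overline{a}$ in the relevant class, so \Cref{MinDisconSet} applies to each of them.

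The only step needing genuine care, rather than calculation, is avoiding overcounting when assembling $N(\overline{a})$: the generators of $\langle \overline{a} \rangle$ lie both in the block $\langle \overline{a} \rangle$ and in the top divisor class, and $\overline{0}$ always lies in $\langle \overline{a} \rangle$; keeping $\langle \overline{a} \rangle$ intact and adjoining only the classes of \emph{proper} divisors of $d$ is exactly what reconciles the two descriptions and reproduces the sets written in the statement.
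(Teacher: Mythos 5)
Your proposal is correct and follows exactly the route the paper intends: the paper derives this corollary in one line from \Cref{MinDegValue}, \Cref{OrderDeg} and \Cref{MinDisconSet}, and your argument is that same chain with the neighbourhood computation $N(\overline{a}) \cup \{\overline{a}\} = \langle \overline{a} \rangle \cup \bigcup_{e \mid d,\, e \neq d} [\overline{e}]$ (for $d = \gcd(a,n)$) written out explicitly. That bookkeeping, which the paper leaves implicit, is carried out correctly in all four cases.
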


\begin{proposition}\label{mindeg}
 Let $n=p_1^{\alpha_1}p_2^{\alpha_2}\ldots p_r^{\alpha_r}$, $r \geq 2$, $p_1 < p_2 < \cdots < p_r$ be prime numbers and $\alpha_i \in \mathbb{N}$ for $1 \leq i \leq r$. Let
\begin{equation}\label{mindegEq}
\eta_1(n)= \frac{n}{p_r^{\alpha_r}} + (p_r^{\alpha_r}-1) \phi \left( \frac{n}{p_r^{\alpha_r}}\right)-1,
\end{equation}
and
\begin{equation}\label{mindegEq2}
\eta_2(n)= \frac{n}{p_{r-1}p_r} +\phi(n)+ \phi \left( \frac{n}{p_r}\right) + \phi \left( \frac{n}{p_{r-1}}\right)-1.
\end{equation}
Then $\eta_1(n)$ and $\eta_2(n)$ are sharp upper bounds of $\delta(\mathcal{G}(\mathbb{Z}_n))$.
\end{proposition}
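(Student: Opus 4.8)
The plan is to recognise $\eta_1(n)$ and $\eta_2(n)$ as the degrees of two explicit vertices of $\mathcal{G}(\mathbb{Z}_n)$ and then exploit the minimality of $\delta$. First I would prove that $\eta_1(n) = \deg(\overline{p_r^{\alpha_r}})$ and $\eta_2(n) = \deg(\overline{p_{r-1}p_r})$ using \Cref{verdeg}. Granting this, since $\delta(\mathcal{G}(\mathbb{Z}_n))$ is by definition the least of all vertex degrees while $\eta_1(n)$ and $\eta_2(n)$ are degrees of genuine vertices, the inequalities $\delta(\mathcal{G}(\mathbb{Z}_n)) \le \eta_1(n)$ and $\delta(\mathcal{G}(\mathbb{Z}_n)) \le \eta_2(n)$ are immediate, which settles the upper-bound assertions.

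For $\eta_1$, note $\gcd(p_r^{\alpha_r}, n) = p_r^{\alpha_r}$ (as $\alpha_r \geq 1$), so \Cref{verdeg} gives $\deg(\overline{p_r^{\alpha_r}}) = \frac{n}{p_r^{\alpha_r}} + \sum_{k=0}^{\alpha_r-1}\phi(n/p_r^{k}) - 1$. For $0 \le k \le \alpha_r-1$ one has $n/p_r^{k} = (n/p_r^{\alpha_r})\cdot p_r^{\alpha_r-k}$ with the two factors coprime and $\alpha_r-k \ge 1$, so multiplicativity yields $\phi(n/p_r^{k}) = \phi(n/p_r^{\alpha_r})\,\phi(p_r^{\alpha_r-k})$; the telescoping identity $\sum_{j=1}^{\alpha_r}\phi(p_r^{j}) = p_r^{\alpha_r}-1$ then collapses the sum and produces exactly \eqref{mindegEq}. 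For $\eta_2$, whenever $n \neq p_1 p_2$ the element $\overline{p_{r-1}p_r}$ lies in $\widetilde{\mathbb{Z}}_n$ with $\gcd(p_{r-1}p_r, n) = p_{r-1}p_r$, whose proper divisors are precisely $1$, $p_{r-1}$ and $p_r$; \Cref{verdeg} then reads off \eqref{mindegEq2} at once. The single edge case is $n = p_1 p_2$ (forcing $r = 2$ and $\alpha_1 = \alpha_2 = 1$), where $\overline{p_{r-1}p_r} = \overline{0}$; here a short direct computation shows $\eta_2(n) = n-1$, and since $\delta(\mathcal{G}(\mathbb{Z}_n)) \le n-1$ always, the bound persists.

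It then remains to establish sharpness, i.e. to exhibit $n$ attaining each bound. For $\eta_1$ I would take $n = p_1^{\alpha_1}p_2^{\alpha_2}$ (so $r = 2$, $p_r = p_2$): by \Cref{MinDegValue}(i) the vertex $\overline{p_2^{\alpha_2}}$ realises the minimum degree, whence $\delta(\mathcal{G}(\mathbb{Z}_n)) = \deg(\overline{p_r^{\alpha_r}}) = \eta_1(n)$. For $\eta_2$ I would take $n = p_1 p_2 p_3 p_4$ even with $p_4 < p_3 + \frac{2(p_3-1)}{p_2-1}$, for instance $n = 2\cdot 3\cdot 5\cdot 7$ (here $7 < 5 + 4 = 9$): by Case~2 of \Cref{MinDegValue}(iii) the vertex $\overline{p_3 p_4}$ realises the minimum degree, so $\delta(\mathcal{G}(\mathbb{Z}_n)) = \deg(\overline{p_{r-1}p_r}) = \eta_2(n)$.

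The bulk of the work is the \Cref{verdeg} bookkeeping in the first step, in particular the telescoping simplification yielding $\eta_1$ and the isolation of the three relevant divisors for $\eta_2$, all of which is routine. The only genuine subtlety is the degenerate case $n = p_1 p_2$ for $\eta_2$, where the nominal vertex $\overline{p_{r-1}p_r}$ collapses to $\overline{0}$ and the bound must be argued separately; beyond this there is no real obstacle, since the upper-bound property is automatic once the $\eta_i$ are identified as vertex degrees, and sharpness is inherited directly from \Cref{MinDegValue}.
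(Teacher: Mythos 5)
Your proposal is correct and takes essentially the same route as the paper: identify $\eta_1(n)$ and $\eta_2(n)$ as the degrees of the vertices $\overline{p_r^{\alpha_r}}$ and $\overline{p_{r-1}p_r}$ via \Cref{verdeg}, conclude the upper bounds immediately, and obtain sharpness from \Cref{MinDegValue}(i) and \Cref{MinDegValue}(iii). Your separate treatment of the degenerate case $n = p_1p_2$, where $\overline{p_{r-1}p_r} = \overline{0}$ lies outside $\widetilde{\mathbb{Z}}_n$ so that \Cref{verdeg} does not literally apply, is a small refinement that the paper's proof silently skips.
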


\begin{proof}
By \Cref{verdeg},
\begin{equation}\label{mindegUB}
\deg\left(\overline{p_r^{{\alpha_r}}}\right) = \displaystyle \frac{n}{p_r^{{\alpha_r}}} + \sum \limits_{i=0}^{\alpha_r-1}\phi \left( \displaystyle \frac{n}{p_r^i} \right)-1=\displaystyle \frac{n}{p_r^{\alpha_r}} + (p_r^{\alpha_r}-1) \phi \left( \displaystyle \frac{n}{p_r^{\alpha_r}}\right)-1,
\end{equation}

and

\begin{equation}\label{mindegUB2}
\deg(\overline{p_{r-1}p_r})=\frac{n}{p_{r-1}p_r} +\phi(n)+ \phi \left( \frac{n}{p_r}\right) + \phi \left( \frac{n}{p_{r-1}}\right)-1.
\end{equation}

Thus $\eta_1(n)$ and $\eta_2(n)$ are upper bounds of $\delta(\mathcal{G}(\mathbb{Z}_n))$. Moreover, it follows from \Cref{MinDegValue}(i),(ii),(iii) that the bound $\eta_1(n)$ is sharp, and it follows from \Cref{MinDegValue}(iii) that the bound $\eta_2(n)$ is sharp.
  \end{proof}

Here is a consequence of \Cref{ConDeglneqPower} and \Cref{mindeg}.
\begin{corollary}
For $n \in \mathbb{N}$, $\eta_1(n)$ and $\eta_2(n)$ are upper bounds of  $\kappa(\mathcal{G}(\mathbb{Z}_n))$.
\end{corollary}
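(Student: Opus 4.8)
The plan is to obtain the two bounds on $\kappa(\mathcal{G}(\mathbb{Z}_n))$ by composing inequalities that are already available, so that no fresh computation is required. From \Cref{ConDeglneqPower}, applied to the cyclic group $\mathbb{Z}_n$, we have
\[
\kappa(\mathcal{G}(\mathbb{Z}_n)) \leq \kappa'(\mathcal{G}(\mathbb{Z}_n)) \leq \delta(\mathcal{G}(\mathbb{Z}_n)),
\]
and in particular $\kappa(\mathcal{G}(\mathbb{Z}_n)) \leq \delta(\mathcal{G}(\mathbb{Z}_n))$. On the other side, \Cref{mindeg} supplies the two upper estimates $\delta(\mathcal{G}(\mathbb{Z}_n)) \leq \eta_1(n)$ and $\delta(\mathcal{G}(\mathbb{Z}_n)) \leq \eta_2(n)$.

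First I would simply chain these by transitivity of $\leq$. Since $\kappa(\mathcal{G}(\mathbb{Z}_n)) \leq \delta(\mathcal{G}(\mathbb{Z}_n))$ and $\delta(\mathcal{G}(\mathbb{Z}_n)) \leq \eta_1(n)$, we get $\kappa(\mathcal{G}(\mathbb{Z}_n)) \leq \eta_1(n)$; the identical argument with $\eta_2(n)$ in place of $\eta_1(n)$ yields $\kappa(\mathcal{G}(\mathbb{Z}_n)) \leq \eta_2(n)$. This is exactly the assertion of the corollary, and since all the substantive work has already been carried out in \Cref{ConDeglneqPower} and \Cref{mindeg}, there is no genuine obstacle to overcome here.

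The only point that deserves a word of care is the admissible range of $n$. The quantities $\eta_1(n)$ and $\eta_2(n)$ are defined in \Cref{mindeg} under the hypothesis $r \geq 2$, that is, when $n$ has at least two distinct prime divisors; in particular $\eta_2(n)$ involves $p_{r-1}$ and is meaningless for a prime power. The statement is therefore to be read for such $n$, where \Cref{mindeg} applies verbatim. If one wishes to include the prime-power case, only $\eta_1$ is defined, and there $\mathcal{G}(\mathbb{Z}_n)$ is complete by \Cref{CompleteCond}, whence $\eta_1(n) = n-1 = \kappa(\mathcal{G}(\mathbb{Z}_n))$ and the bound holds with equality. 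In every case the conclusion follows purely formally from the chain of inequalities above.
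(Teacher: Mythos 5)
Your proposal is correct and takes essentially the same route as the paper: the corollary is stated there as an immediate consequence of \Cref{ConDeglneqPower} (which gives $\kappa(\mathcal{G}(\mathbb{Z}_n)) \leq \delta(\mathcal{G}(\mathbb{Z}_n))$) and \Cref{mindeg} (which gives $\delta(\mathcal{G}(\mathbb{Z}_n)) \leq \eta_1(n)$ and $\delta(\mathcal{G}(\mathbb{Z}_n)) \leq \eta_2(n)$), i.e.\ exactly the transitivity chain you describe. Your closing remark on the admissible range of $n$ (the quantities $\eta_1(n)$, $\eta_2(n)$ being defined only for $r \geq 2$) is a sensible caveat that the paper leaves implicit, but it does not change the argument.
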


\section{Minimum degree of power graphs of abelian $p$-group, $D_n$ and $Q_n$}
\label{s-grps}

In this section, we find the minimum degree and minimum disconnecting sets of abelian $p$-groups, dihedral groups and dicyclic groups in respective subsections.

\subsection{Abelian $p$-groups}
\label{s-pgr}

By \cite[Theorem 11.1]{Gallian}, a finite abelian group $G$ is isomorphic to an unique direct product of cyclic groups of prime power order.  In this product, let $\sigma(G)$ be the number of cyclic groups and $\tau(G)$ be order of the smallest cyclic group.

\begin{theorem}\label{DegAbelianp}
Let $G$ be an abelian $p$-group for some prime $p$, then $\delta(\mathcal{G}(G))=\tau(G)-1$.
\end{theorem}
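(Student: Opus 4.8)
The plan is to realize the minimum degree as the degree of a specific, well-chosen element of $G$ and then compute that degree directly. Since $G$ is an abelian $p$-group, by the structure theorem it is isomorphic to a direct product of cyclic $p$-groups; write $\tau = \tau(G)$ for the order of the smallest cyclic factor, so $\tau = p^\beta$ for some $\beta \geq 1$. The key structural fact about power graphs of $p$-groups is that two elements $x,y$ are adjacent if and only if one generates a subgroup containing the other, which for $p$-power orders means one of $\langle x \rangle, \langle y \rangle$ is contained in the other. First I would identify an element achieving the minimum degree: a natural candidate is a generator $x$ of a cyclic direct factor of order $\tau$, which has order $p^\beta$. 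The guiding heuristic is that elements of smallest order tend to have the fewest neighbours, since a neighbour $y$ must satisfy $\langle y \rangle \subseteq \langle x \rangle$ or $\langle x \rangle \subseteq \langle y \rangle$, and few subgroups sit inside a small cyclic group.

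\textbf{Counting the neighbours of $x$.} For such an $x$ of order $p^\beta$, I would split the neighbours into two classes. The neighbours $y$ with $\langle y \rangle \subseteq \langle x \rangle$ are exactly the nonidentity elements of the cyclic group $\langle x \rangle$ of order $p^\beta$, contributing the $p^\beta - 1$ elements of $\langle x \rangle \setminus \{e\}$ other than $x$ itself, plus the identity. The neighbours $y$ with $\langle x \rangle \subsetneq \langle y \rangle$ are the elements whose cyclic subgroup properly contains $\langle x \rangle$; the crucial claim is that for $x$ chosen as a generator of a smallest cyclic direct factor, \emph{no} such $y$ exists, because no cyclic subgroup of $G$ properly contains the cyclic factor $\langle x \rangle$ of minimal order $\tau$ — any larger cyclic subgroup would have to absorb $\langle x \rangle$ into a single cyclic group, contradicting the direct-product decomposition when $\tau$ is the smallest factor. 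Granting this, $\deg(x) = |\langle x\rangle| - 1 = \tau - 1$, giving the upper bound $\delta(\mathcal{G}(G)) \leq \tau - 1$.

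\textbf{The matching lower bound.} For the reverse inequality, I would show every element $z \in G$ has $\deg(z) \geq \tau - 1$. Take any nonidentity $z$ of order $p^\gamma$; its cyclic subgroup $\langle z \rangle$ has order $p^\gamma \geq p^\beta = \tau$ whenever $z$ lies outside the smallest factor, and more importantly every element has order at least... here the cleaner route is: each $z \neq e$ is adjacent to all $p^{\gamma} - 1$ other nonidentity elements of $\langle z \rangle$ together with $e$, so $\deg(z) \geq |\langle z\rangle| - 1 = p^\gamma - 1 \geq \tau - 1$ provided $p^\gamma \geq \tau$. The potential gap is an element of order smaller than $\tau$, but no nonidentity element can have order less than the smallest cyclic factor's exponent; I would verify that the minimal element order in $G$ equals $p$ times nothing smaller than forced by the factors, and in fact the correct invariant is that $\tau$ equals the smallest \emph{factor} order, so elements of order $p < \tau$ can occur. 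Thus the honest lower-bound argument must use more than $\langle z \rangle$: it must add the neighbours lying in larger cyclic overgroups of $\langle z \rangle$, and count them to reach $\tau - 1$.

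\textbf{Anticipated main obstacle.} The hard part will be the lower bound when $z$ has order strictly less than $\tau$ (for instance an order-$p$ element when $\tau = p^\beta$ with $\beta \geq 2$). For such $z$ I must carefully count the neighbours $y$ with $\langle z \rangle \subsetneq \langle y \rangle$, i.e. elements of larger cyclic subgroups passing through $\langle z \rangle$, and show these supply enough extra neighbours to push $\deg(z)$ up to at least $\tau - 1$. I expect to handle this by invoking \Cref{OrderDeg} to reduce to one representative per cyclic subgroup and then a direct comparison of degrees across generators of the various cyclic factors, showing the generator of a smallest factor is genuinely the minimizer; the subgroup-counting for small-order elements, where many overgroups exist, is where the real care is needed.
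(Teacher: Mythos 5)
Your upper bound is correct and is essentially the paper's own argument: a generator $x$ of a smallest cyclic direct factor satisfies $N(x) = \langle x \rangle \setminus \{x\}$, because such an $x$ cannot be a proper power --- if $x = y^k$ with $\langle x \rangle \subsetneq \langle y \rangle$, then $p \mid k$, and projecting onto the factor $\langle x \rangle$ in a decomposition $G = \langle x \rangle \times K$ yields $p \mid 1$, a contradiction. This gives $\delta(\mathcal{G}(G)) \leq \tau(G) - 1$.

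The lower bound, however, contains a genuine gap, one you flag yourself but never close. Whenever every cyclic factor has order at least $p^2$, the group has elements of order $p < \tau(G)$ (for instance $(\overline{p^{\alpha_1-1}}, \overline{0}, \ldots, \overline{0})$), and for these the inequality $\deg(z) \geq |\langle z \rangle| - 1$ falls short of $\tau(G)-1$. Your proposed repair --- invoke \Cref{OrderDeg} and compare degrees across generators of the cyclic factors --- cannot work as stated: \Cref{OrderDeg} only equates the degrees of elements generating the \emph{same} cyclic subgroup, so it gives no information about an element of order $p$, which generates no direct factor; and ``showing the generator of a smallest factor is the minimizer'' presupposes a lower bound on the degrees of the small-order elements, which is exactly what is missing. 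The idea that closes the gap (and is the heart of the paper's proof) is to embed every element in a single large cyclic subgroup rather than to count its cyclic overgroups: writing $x = (\overline{a_1}, \ldots, \overline{a_r}) \in \mathbb{Z}_{p^{\alpha_1}} \times \cdots \times \mathbb{Z}_{p^{\alpha_r}}$ with $a_i = c_i p^{\beta_i}$ and $\gcd(c_i, p) = 1$ for each nonzero component, put $\beta_s = \min\{\beta_i : \overline{a_i} \neq \overline{0}\}$ and let $y$ have components $\overline{c_i p^{\beta_i - \beta_s}}$ (and $\overline{0}$ wherever $\overline{a_i} = \overline{0}$). Then $x = p^{\beta_s} y \in \langle y \rangle$; the $s$-th component of $y$ is $\overline{c_s}$ with $\gcd(c_s, p) = 1$, so $o(y) \geq p^{\alpha_s} \geq \tau(G)$; and $\langle y \rangle$ induces a clique in $\mathcal{G}(G)$ since it is cyclic of prime-power order. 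Hence $\deg(x) \geq o(y) - 1 \geq \tau(G) - 1$ for every nonidentity $x$ (and trivially for $x = e$), which uniformly handles precisely the small-order elements on which your sketch stalls.
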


\begin{proof}
We have $G \cong H:=\mathbb{Z}_{p^{\alpha_1}} \times \mathbb{Z}_{p^{\alpha_2}} \times \ldots \times \mathbb{Z}_{p^{\alpha_r}}$ for some $r \in \mathbb{N}$, prime $p$ and $\alpha_i \in \mathbb{N}$ for all $1 \leq i \leq r$. Take $\alpha_t = \displaystyle\min_{1\leq i \leq r} \alpha_i$, so that $\rho(G)=p^{\alpha_t} - 1$.

 Since isomorphic groups have isomorphic power graphs, it is enough to show that $\delta(\mathcal{G}(H))=p^{\alpha_t}-1$. If $r=1$, the proof follows trivially. So for the rest of the proof, set $r \geq 2$.

Let $x =(\overline{a_1},\overline{a_2},\ldots,\overline{a_r}) \in H$. For any $1 \leq i \leq r$, if $\overline{a_i} \neq \overline{0}$, then $a_i=c_ip^{\beta_i}$ for some $c_i \in \mathbb{N}$,  $(c_i,p)=1$ and $\beta_i \in \mathbb{N} \cup \{0\}$. Take $\beta_s=\min\{\beta_i \mid 1 \leq i \leq r, \overline{a_i} \neq \overline{0}\}$, and define $y=\left(\overline{b_1},\overline{b_2},\ldots,\overline{b_r}\right)$ by
\begin{equation}
b_i = \begin{cases} c_ip^{\beta_i-\beta_s} &\mbox{if } \overline{a_i} \neq \overline{0} \\
0 & \mbox{if } \overline{a_i} = \overline{0} \end{cases}
\end{equation}

Then $\overline{b_s}=\overline{c_s}$, and hence $o(\overline{b_s})=p^{\alpha_s}$. Since $o(y)= \text{lcm}(o(\overline{b_1}),o(\overline{b_2}),\ldots,o(\overline{b_r}))$ \cite[Theorem 8.1]{Gallian}, we get  $o(y)\geq p^{\alpha_s}$. Additionally, $o(y)$ is a prime power, so that $\langle y \rangle$ is a clique \cite{GhoshSensemigroups}. Thus, since $x \in \langle y \rangle$, we have $\deg(x)\geq p^{\alpha_s}-1\geq p^{\alpha_t}-1$. Therefore,
\begin{equation}\label{DegAbelianpEq}
\delta(\mathcal{G}(H)) \geq p^{\alpha_t}-1.
\end{equation}

Now consider $z \in H$ with all components $\overline{0}$ except $t^{th}$, which is $\overline{1}$. Then $\langle z \rangle=\prod_{i=1}^{r}K_i$, where $K_t=\mathbb{Z}_{p^{\alpha_t}}$ and $K_i=\langle \overline{0} \rangle$ for all $1 \leq i \leq r$, $i \neq t$.
Thus $\deg(z) \geq p^{\alpha_t}-1$. We next show that $z$ is not adjacent to any element of $\in H - \langle z \rangle$.  If possible let $z$ is adjacent to some $w \in H - \langle z \rangle$. As $w \notin \langle z \rangle$, there exists $a \in \mathbb{N}$ such that $z=aw$. If $(a,p)=1$, then $\langle w \rangle=\langle z \rangle$; which is not possible, so that $p|a$. Let $\overline{d_t}$ be the $t^{th}$ component of $w$. Then $a\overline{d_t}=\overline{1}$ and hence $ad_t+bp^{\alpha_t}=1$ for some $b \in \mathbb{Z}$. Since $p|a$, we have $p|1$, which is not possible. So $z$ is not adjacent to $w$. So $\deg(z)=p^{\alpha_t}-1$ and consequently, $\delta(\mathcal{G}(H))\leq p^{\alpha_t}-1$.

Thus we conclude from the above inequality and \eqref{DegAbelianpEq} that
$$\delta(\mathcal{G}(H))= p^{\alpha_t}-1.$$
This completes the proof of the theorem.
\end{proof}

\begin{theorem}\label{DisAbelianp}
Let $G$ be an abelian $p$-group for some prime $p$. Let $\psi : G \to \mathbb{Z}_{p^{\alpha_1}} \times \mathbb{Z}_{p^{\alpha_2}} \times \ldots \times \mathbb{Z}_{p^{\alpha_r}}$ be an isomorphism and $\tau(G)=p^{\alpha_t}$. If $g \in G$ is such that all components of $\psi(g)$ are $\overline{0}$ except $t^{th}$, say $\overline{a}$, satisfying $\gcd(a,p)=1$, then $E[g,\psi^{-1}(\langle \psi(g) \rangle)-g]$ is a minimum disconnecting set of $\mathcal{G}(G)$.
\end{theorem}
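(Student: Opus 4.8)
The plan is to invoke \Cref{MinDisconSet}, for which it suffices to establish two facts: that $g$ is a vertex of minimum degree in $\mathcal{G}(G)$, and that its neighbourhood is exactly $N(g)=\psi^{-1}(\langle\psi(g)\rangle)-g$. Since $\psi$ is a group isomorphism it induces an isomorphism of the power graphs $\mathcal{G}(G)\cong\mathcal{G}(H)$, where $H=\mathbb{Z}_{p^{\alpha_1}}\times\cdots\times\mathbb{Z}_{p^{\alpha_r}}$. Writing $h=\psi(g)$, I would therefore reduce the whole statement to proving, inside $\mathcal{G}(H)$, that $N(h)=\langle h\rangle-h$ and that $\deg(h)=\delta(\mathcal{G}(H))$, and then transport the conclusion back through $\psi^{-1}$.

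The core step is computing $N(h)$. Because $\gcd(a,p)=1$, the $t$th component $\overline{a}$ generates $\mathbb{Z}_{p^{\alpha_t}}$, so $o(h)=p^{\alpha_t}$, which is a prime power; hence $\langle h\rangle$ is a clique in $\mathcal{G}(H)$ (as in the proof of \Cref{DegAbelianp}, via \Cref{CompleteCond}(ii)), which gives the inclusion $\langle h\rangle-h\subseteq N(h)$. For the reverse inclusion I would mimic the non-adjacency argument in the proof of \Cref{DegAbelianp}. Suppose $h$ were adjacent to some $w\in H-\langle h\rangle$. Since $w\notin\langle h\rangle$, $w$ is not a multiple of $h$, so adjacency forces $h=kw$ for some $k\in\mathbb{N}$. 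If $\gcd(k,p)=1$ then, as $o(w)$ is a power of $p$, we would get $\langle h\rangle=\langle kw\rangle=\langle w\rangle$, contradicting $w\notin\langle h\rangle$; hence $p\mid k$. Comparing $t$th components in $h=kw$ gives $k\,\overline{d_t}=\overline{a}$ in $\mathbb{Z}_{p^{\alpha_t}}$, that is $kd_t+bp^{\alpha_t}=a$ for some $b\in\mathbb{Z}$, where $\overline{d_t}$ is the $t$th component of $w$. As $p\mid k$ and $\alpha_t\geq 1$, this forces $p\mid a$, contradicting $\gcd(a,p)=1$. Thus $N(h)=\langle h\rangle-h$.

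It then remains to assemble the pieces. From $N(h)=\langle h\rangle-h$ and $|\langle h\rangle|=o(h)=p^{\alpha_t}$ I obtain $\deg(h)=p^{\alpha_t}-1=\tau(G)-1$, which equals $\delta(\mathcal{G}(G))=\delta(\mathcal{G}(H))$ by \Cref{DegAbelianp}; so $h$ is a vertex of minimum degree (note $g$ is nontrivial, so $|G|=|H|\geq 2$ and \Cref{MinDisconSet} applies). Applying \Cref{MinDisconSet} in $\mathcal{G}(H)$, the set $E[h,N(h)]=E[h,\langle h\rangle-h]$ is a minimum disconnecting set of $\mathcal{G}(H)$, and carrying it back along the power-graph isomorphism $\psi$ shows $E[g,\psi^{-1}(\langle\psi(g)\rangle)-g]$ is a minimum disconnecting set of $\mathcal{G}(G)$. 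The main obstacle is the reverse inclusion $N(h)\subseteq\langle h\rangle$ in the second paragraph; the remainder is bookkeeping through the isomorphism together with direct appeals to \Cref{DegAbelianp} and \Cref{MinDisconSet}.
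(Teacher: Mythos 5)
Your proposal is correct and takes essentially the same route as the paper: show $N(\psi(g))=\langle\psi(g)\rangle-\psi(g)$ via the clique and non-adjacency arguments from \Cref{DegAbelianp}, identify $\deg(g)=\tau(G)-1=\delta(\mathcal{G}(G))$, and invoke \Cref{MinDisconSet}, transporting everything through the isomorphism $\psi$. The only difference is that you explicitly re-run the non-adjacency argument with $\overline{a}$, $\gcd(a,p)=1$, in place of $\overline{1}$ --- a detail the paper compresses into the phrase \enquote{following the proof of \Cref{DegAbelianp}}, whose cited proof literally treats only the component $\overline{1}$ --- so your write-up is, if anything, more complete.
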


\begin{proof}
Take $\psi(g)=z$. Following the proof of \Cref{DegAbelianp}, $N(z)=\langle z \rangle-z$. Then, $\psi$ being an isomorphism, $N(g)=\psi^{-1}(\langle z \rangle)-g$. Thus by \Cref{MinDisconSet}, $E[g,\psi^{-1}(\langle z \rangle)-g]$ is a minimum disconnecting set of $\mathcal{G}(G)$.
\end{proof}

\subsection{Dihedral groups}
\label{s-Dn}

For a positive integer $n \geq 3$, the \emph{dihedral group} $D_n$ \cite{Dummit} is a finite group of order $2n$ having presentation

\begin{equation}\label{DihedralEq}
D_n=\left \langle a,b \mid a^n=b^2=e, ab=ba^{-1} \right \rangle,
\end{equation}
where $e$ is the identity element of $D_n$.

 In the next theorem, we find the minimum degree and cut-edge of $\mathcal{G}(D_n)$.

\begin{theorem}\label{DihedralDegree}
For $n\geq 3$, $\delta(\mathcal{G}(D_n))=1$. Moreover, for any $ 0 \leq i < n$, edge between $e$ and $a^ib$ is a cut-edge of $\mathcal{G}(D_n)$.
\end{theorem}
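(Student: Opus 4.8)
The plan is to exploit the fact that every reflection in $D_n$ is an involution, so that its only powers are the identity and itself. First I would recall the standard description of the group: the $n$ rotations $e, a, a^2, \ldots, a^{n-1}$ constitute the cyclic subgroup $\langle a \rangle$, while the $n$ reflections are $a^i b$ for $0 \leq i < n$. Using the relation $ab = ba^{-1}$ one computes $(a^i b)^2 = a^i (b a^i) b = a^i (a^{-i} b) b = b^2 = e$, so every reflection $a^i b$ has order exactly $2$.

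Next I would determine the degree of a fixed reflection $a^i b$ in $\mathcal{G}(D_n)$. The set of powers of $a^i b$ is precisely $\{e, a^i b\}$, so among the vertices that are powers of $a^i b$, the only one distinct from $a^i b$ is $e$; hence $a^i b$ is adjacent to $e$. It then remains to check that no vertex $v \notin \{e, a^i b\}$ has $a^i b$ as one of its powers. This is the key step, and it splits into two cases. If $v$ is a rotation $a^m$, then every power $v^k = a^{mk}$ again lies in $\langle a \rangle$ and is therefore a rotation, never equal to the reflection $a^i b$. If $v$ is a reflection $a^j b$ with $j \neq i$, then the only nonidentity power of $v$ is $v$ itself, so $v^k \in \{e, a^j b\}$ can never equal $a^i b$. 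Consequently the unique neighbour of $a^i b$ is $e$, giving $\deg(a^i b) = 1$, and thus $N(a^i b) = \{e\}$.

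From this the value of the minimum degree follows at once. By \Cref{CompleteCond}(i) the graph $\mathcal{G}(D_n)$ is connected, and it has $2n \geq 6 > 1$ vertices, so no vertex is isolated and every degree is at least $1$. Combining this lower bound with $\deg(a^i b) = 1$ yields $\delta(\mathcal{G}(D_n)) = 1$, as claimed.

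Finally, for the cut-edge assertion I would argue that since $N(a^i b) = \{e\}$, the only edge incident to $a^i b$ is the edge joining $e$ and $a^i b$; deleting it leaves $a^i b$ as an isolated vertex and thereby increases the number of components, so that edge is a cut-edge. Equivalently, this is exactly the statement that $E[a^i b, N(a^i b)]$ is a minimum disconnecting set, which is \Cref{MinDisconSet} applied to the minimum-degree vertex $x = a^i b$. The only genuinely delicate point is verifying that distinct reflections cannot be powers of one another, but this is immediate once one observes that each reflection has order exactly $2$; no serious computational obstacle arises.
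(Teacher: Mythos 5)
Your proposal is correct and follows essentially the same route as the paper: verify $(a^ib)^2=e$ so that $\langle a^ib\rangle=\{e,a^ib\}$, conclude that $e$ is the unique neighbour of each reflection, use connectedness of $\mathcal{G}(D_n)$ for the lower bound $\delta\geq 1$, and observe that deleting the edge $\{e,a^ib\}$ isolates $a^ib$. The only difference is cosmetic: you spell out explicitly the two-case check (rotations versus other reflections) that the paper compresses into the decomposition $D_n=\langle a\rangle\cup\bigcup_{i=0}^{n-1}\langle a^ib\rangle$.
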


\begin{proof}
From the presentation of $D_n$,  $\langle a \rangle=\{e, a, a^2,\ldots, a^{n-1}\}$. For any $ 0 \leq i < n$, $(a^ib)^2=e$, so that $\langle a^ib \rangle=\{e, a^ib\}$. Thus

 \begin{equation}
D_n=\langle a \rangle \cup \bigcup \limits_{i=0}^{n-1}\langle a^ib \rangle.
\end{equation}

 Then for any $0 \leq i < n$, the only vertex adjacent to $a^ib$ is $e$ and hence $\deg(a^ib)=1$. As $\mathcal{G}(D_n)$ is connected, $\deg(x) \geq 1$ for all $x \in D_n$. Hence $\delta(\mathcal{G}(D_n))=1$. Additionally, the edge between $e$ and $a^ib$ is a cut-edge of $\mathcal{G}(D_n)$ for all $0 \leq i < n$.
\end{proof}
\vspace{8pt}

\subsection{Dicyclic groups}
\label{s-Qn}

For a positive integer $n \geq 2$, the \emph{dicyclic group} $Q_n$ \cite{Dummit} is a finite group of order $4n$ having presentation
\begin{equation}\label{DicyclicEq}
Q_n=\left \langle a, b \mid a^{2n}=e, a^n=b^2, ab=ba^{-1} \right \rangle,
\end{equation}
where $e$ is the identity element of $Q_n$.

We first show by induction that $(a^ib)^2=a^n$ for all $0 \leq i \leq 2n-1$. As $b^2=a^n$, it is trivially true for $i=0$. Let it be true for $i=k$, where $0 \leq k \leq 2n-2$. Then for $i=k+1$, $(a^{k+1}b)^2=a^{k+1}ba^{k+1}b=a^{k}ba^{-1}a^{k+1}b=(a^kb)^2=a^n$, by induction hypothesis.

 Now for any $0 \leq i \leq n-1$, we have $(a^ib)^3=a^na^ib=a^{n+i}b$ and $(a^{n+i}b)^3=a^na^{n+i}b=a^ib$. Thus
\begin{equation}\label{EqQn}
\langle a^ib \rangle=\langle a^{n+i}b \rangle=\{ e, a^ib, a^n, a^{n+i}b \} \text{ for all } 0 \leq i \leq n-1.
\end{equation}

Therefore, any element of $Q_n - \langle a \rangle$ can be written as $a^ib$ for some $0 \leq i \leq 2n-1$. Subsequently, we have
\begin{equation}\label{EqQn2}
Q_n=\langle a \rangle \cup \bigcup \limits_{i=0}^{n-1}\langle a^ib \rangle
\end{equation}

In the next theorem, we find the minimum degree and minimum disconnecting sets of $\mathcal{G}(Q_n)$.

\begin{theorem}\label{DicyclicMinDeg}
For $n\geq 2$, $\delta(\mathcal{G}(Q_n))=3$. Moreover, for any $0 \leq i \leq n-1$, $E[a^ib, \{ e, a^n, a^{n+i}b \}]$ and $E[a^{n+i}b, \{ e, a^n, a^ib \}]$ are minimum disconnecting sets of $\mathcal{G}(D_n)$.
\end{theorem}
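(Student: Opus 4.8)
The plan is to first pin down the degree of the elements $a^jb$ lying outside $\langle a\rangle$, then show that every element of $\langle a\rangle$ has degree at least $3$, so that the minimum degree is exactly $3$ and is attained precisely on the vertices $a^jb$.

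First I would record that each $a^jb$ has order $4$: since $(a^jb)^2=a^n$ and $o(a)=2n$ forces $o(a^n)=2$, we get $(a^jb)^4=a^{2n}=e$ while $(a^jb)^2=a^n\neq e$. Thus $\langle a^jb\rangle$ is the cyclic group of order $4$ exhibited in \eqref{EqQn}, whose power graph is complete by \Cref{CompleteCond}(ii); in particular $a^jb$ is adjacent to the three other members of $\langle a^jb\rangle$. The key step is then to show that $a^jb$ has \emph{no} neighbour outside $\langle a^jb\rangle$. For $a^ib$ (with $0\le i\le n-1$) to be adjacent to a vertex $z$, either $z$ is a power of $a^ib$, forcing $z\in\langle a^ib\rangle$, or $a^ib$ is a power of $z$, i.e.\ $a^ib\in\langle z\rangle$. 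In the latter case $z\notin\langle a\rangle$ (otherwise $\langle z\rangle\subseteq\langle a\rangle$, contradicting $a^ib\notin\langle a\rangle$), so $z=a^mb$ for some $m$; but every such element has order $4$, so $\langle z\rangle$ has order $4$ and $a^ib\in\langle z\rangle$ forces $\langle z\rangle=\langle a^ib\rangle$. Either way $z\in\langle a^ib\rangle$, whence $N(a^ib)=\{e,a^n,a^{n+i}b\}$ and $\deg(a^ib)=3$. The identical computation, using $(a^{n+i}b)^3=a^ib$, yields $N(a^{n+i}b)=\{e,a^n,a^ib\}$.

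Next I would verify that every element of $\langle a\rangle$ has degree at least $3$. The identity $e$ is adjacent to all $4n-1$ other vertices, and $a^n=(a^jb)^2$ is a power of each of the $2n$ elements $a^jb$, so both have degree far above $3$. For any remaining $a^k$ with $a^k\notin\{e,a^n\}$, the analysis above shows $a^k$ is a power of no $a^jb$ and no $a^jb$ is a power of it, so all its neighbours lie in $\langle a\rangle$; hence its degree in $\mathcal{G}(Q_n)$ equals its degree in $\mathcal{G}(\langle a\rangle)\cong\mathcal{G}(\mathbb{Z}_{2n})$. Since $2n\ge 4$ is even and therefore composite, \Cref{MinDegBasic}(i) gives $\delta(\mathcal{G}(\mathbb{Z}_{2n}))\ge\phi(2n)+1\ge 3$. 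Combining the two cases shows $\delta(\mathcal{G}(Q_n))=3$, attained exactly on the elements $a^jb$.

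Finally, the disconnecting-set claim follows at once from \Cref{MinDisconSet}: taking $x=a^ib$ (respectively $x=a^{n+i}b$), a vertex of minimum degree $3$, the set $E[x,N(x)]$ is a minimum disconnecting set of $\mathcal{G}(Q_n)$, and the neighbourhoods computed above are precisely $\{e,a^n,a^{n+i}b\}$ and $\{e,a^n,a^ib\}$. I expect the only genuine subtlety to be the key step of ruling out neighbours of $a^jb$ outside its own order-$4$ subgroup; this hinges on the structural fact that every element of $Q_n-\langle a\rangle$ has order exactly $4$, so no such element can be embedded in a strictly larger cyclic subgroup.
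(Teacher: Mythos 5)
Your proof is correct and takes essentially the same route as the paper: determine that $N(a^ib)=\langle a^ib\rangle\setminus\{a^ib\}=\{e,a^n,a^{n+i}b\}$ so these vertices have degree exactly $3$, bound the degree of every element of $\langle a\rangle$ below by $\phi(2n)+1\geq 3$ via \Cref{MinDegBasic}, and invoke \Cref{MinDisconSet} to get the minimum disconnecting sets. Your order-$4$ argument ruling out neighbours of $a^ib$ outside $\langle a^ib\rangle$ is simply a more explicit justification of the step the paper reads off directly from its decomposition \eqref{EqQn2}.
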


\begin{proof}
We follow the presentation of $Q_n$ in \eqref{DicyclicEq}. Let $x \in \langle a \rangle$. Since $o(a)=2n$, by \Cref{MinDegBasic}, $\deg(x)\geq \phi(2n)+1$. For $m > 2$, $\phi(m)$ is an even integer \cite[Theorem 7.4]{burton2006elementary}. So, in particular $\deg(x) \geq \phi(2n)+1\geq 3$.

Now let $y \in Q_n - \langle a \rangle$. As we have already observed, $y=a^ib$ for some $0 \leq i \leq 2n-1$. So from \eqref{EqQn2}, the only vertices adjacent to $y$ in $\mathcal{G}(Q_n)$ are elements of $\langle y \rangle-\{ y \}$, and from  \eqref{EqQn}, $\deg(y)=3$. Thus we conclude that $\delta(\mathcal{G}(Q_n))=3$.

Let $0 \leq i \leq n-1$. From the structure of $\mathcal{G}(Q_n)$, $N(a^ib) = \{ e, a^n, a^{n+i}b \}$ and $N(a^{n+i}b) = \{ e, a^n, a^ib \}$. Hence, by \Cref{MinDisconSet}, $E[a^ib, \{ e, a^n, a^{n+i}b \}]$ and $E[a^{n+i}b, \{ e, a^n, a^ib \}]$ are minimum disconnecting sets of $\mathcal{G}(D_n)$.
\end{proof}

\section{Equality of connectivity and minimum degree of power graphs}
\label{s-equal}

In this section, we investigate the equality of connectivity and minimum degree for power graphs of finite groups. We first discuss some necessary conditions required for the equality of connectivity and minimum degree of power graphs of finite groups (cf. \Cref{ConDegEq}), and find the minimum degree when the equality holds for cyclic groups (cf. \Cref{ConDegEqZn1}). We then supply a necessary and sufficient condition for the equality of connectivity and minimum degree of power graph of finite cyclic groups (cf. \Cref{ConDegEqZn2}). Subsequently, by using the minimum degrees obtained in \Cref{s-grps}, we address the equality for abelian $p$-groups, dihedral groups and dicyclic groups.

 We define a relation $\approx$ on $G$ as $x \approx y$ if $\langle x \rangle = \langle y \rangle$. Notice that it is an equivalence relation. For $x \in G$, the equivalence class of $x$ under $\approx$ is simply called the $\approx$-class of $x$ and is denoted by $[x]$. We need the following result on minimal separating sets of $\mathcal{G}(G)$.

\begin{lemma}[{\cite[Theorem 2.10]{ConPowerGr17}}]\label{MinSepUnion}
Let $G$ be a group and $T$ be a minimal separating set of $\mathcal{G}(G)$. Then for any $x \in G$, either $[x] \subseteq T$ or $[x] \cap T = \emptyset$.
\end{lemma}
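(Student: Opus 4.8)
The plan is to fix a minimal separating set $T$ of $\mathcal{G}(G)$ and an arbitrary $x \in G$, and to show that the $\approx$-class $[x]$ is treated ``all or nothing'' by $T$. Recall that $[x] = \{y \in G : \langle y \rangle = \langle x \rangle\}$, and by \Cref{OrderDeg} all elements of $[x]$ have the same degree; more importantly, the defining feature I want to exploit is that elements of a single $\approx$-class have \emph{identical neighbourhoods outside the class itself}. Concretely, if $y \approx x$ then $\langle y \rangle = \langle x \rangle$, so $x$ and $y$ are powers of one another and hence adjacent, and moreover any $z \in G \setminus [x]$ is adjacent to $x$ if and only if it is adjacent to $y$ (this is exactly the adjacency-equivalence observed in the proof of \Cref{OrderDeg}). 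Thus the whole class $[x]$ forms a clique, and every vertex of $[x]$ has the same set of neighbours among the vertices lying outside $[x]$.

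The key step is to argue by contradiction. Suppose $[x] \cap T \neq \emptyset$ and $[x] \not\subseteq T$; pick $u \in [x] \cap T$ and $v \in [x] \setminus T$. I claim $T \setminus \{u\}$ is already a separating set, contradicting the minimality of $T$. To see this, consider the graph $\mathcal{G}(G) - (T \setminus \{u\})$, which still contains $u$ (and $v$). Because $u$ and $v$ lie in the same $\approx$-class, they are adjacent, and every neighbour of $u$ outside $[x]$ is also a neighbour of $v$ outside $[x]$; all remaining vertices of $[x]$ are either still present or removed, but $u$ is adjacent to all of them as well (the class is a clique). Hence in $\mathcal{G}(G) - (T \setminus \{u\})$ every vertex adjacent to $u$ is also adjacent to $v$ (with $v$ itself playing the role for $u$). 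Therefore $u$ cannot be a cut vertex of this graph: any path through $u$ can be rerouted through $v$, so deleting $u$ as well does not increase the number of components. This shows that $\mathcal{G}(G) - (T \setminus \{u\})$ and $\mathcal{G}(G) - T$ have the same number of components, so $T \setminus \{u\}$ separates $\mathcal{G}(G)$, contradicting that $T$ is minimal.

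The main obstacle is making the ``reroute through $v$'' argument airtight, i.e. verifying that removing the single vertex $u$ from the already-reduced graph $\mathcal{G}(G) - (T \setminus \{u\})$ genuinely does not disconnect it. The cleanest way is to establish the substitution principle precisely: in $\mathcal{G}(G) - (T \setminus \{u\})$ we have $N(u) \setminus \{v\} \subseteq N(v) \cup \{v\}$ is not quite what is needed; rather I want $\big(N(u) \cap V(\mathcal{G}(G)-(T\setminus\{u\}))\big) \setminus \{v\} \subseteq N(v)$. This follows because neighbours of $u$ outside $[x]$ coincide with neighbours of $v$ outside $[x]$, and neighbours of $u$ inside $[x]$ that survive are all neighbours of $v$ too (clique property), while $v$ is adjacent to $u$. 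Granting this containment, any walk that passes through $u$ can replace the subwalk $w - u - w'$ by $w - v - w'$, so connectivity between any two vertices other than $u$ is unaffected by deleting $u$; and $u$ itself stays connected via its neighbour $v$. This completes the contradiction and hence the proof that for every $x$, either $[x] \subseteq T$ or $[x] \cap T = \emptyset$.
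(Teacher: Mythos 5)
This lemma is never proved in the paper: it is imported verbatim from the authors' earlier work \cite{ConPowerGr17} (Theorem 2.10 there), so there is no in-paper argument to compare against. Your proof is correct and self-contained, and it rests on exactly the right mechanism: two elements $u,v$ with $\langle u\rangle=\langle v\rangle$ are adjacent and have identical neighbourhoods outside $\{u,v\}$ (they are twins), so in $\mathcal{G}(G)-(T\setminus\{u\})$ every edge at $u$ other than $uv$ can be rerouted through $v$. Your walk-substitution step makes this airtight: deleting $u$ from $\mathcal{G}(G)-(T\setminus\{u\})$ neither merges nor splits components (the component of $u$ retains $v$ and stays connected), hence $\mathcal{G}(G)-(T\setminus\{u\})$ has the same number of components as $\mathcal{G}(G)-T$, which is disconnected; thus $T\setminus\{u\}$ is a separating set properly contained in $T$, contradicting minimality. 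This twin-vertex argument is, in substance, the proof given in the cited reference, so you have not taken a genuinely different route --- you have reconstructed the missing one. One scope remark: your claim that a class $[x]$ is a clique with common outside neighbourhood uses the fact that two generators of the same cyclic subgroup are positive powers of each other, which is valid for finite groups (the setting of this paper and of \Cref{OrderDeg}) but can fail for elements of infinite order; since the statement as quoted says only ``group'', you should state explicitly that $G$ is finite.
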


 Let $G$ be a cyclic group of prime power order. Then it follows from \Cref{CompleteCond}(ii) that $\kappa(\mathcal{G}(G))=\kappa'(\mathcal{G}(G))=\delta(\mathcal{G}(G))=n-1$. The next theorem give some necessary conditions of the concerned equality for groups that are not cyclic groups of prime power order.

\begin{theorem}\label{ConDegEq}
Let $G$ be a finite group and $\kappa(\mathcal{G}(G))=\delta(\mathcal{G}(G))$. If $G$ is not a cyclic group of prime power order and $\delta(\mathcal{G}(G))=\deg(v)$, then the following hold:
\begin{enumerate}[\rm(i)]
\item $N(v)$ is a minimum separating set of  $\mathcal{G}(G)$.
\item The order of $v$ is $2$ in $G$. Consequently, $G$ is of even order.
\end{enumerate}
\end{theorem}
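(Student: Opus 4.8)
The plan is to handle the two parts in order, obtaining (ii) as the payoff of (i) combined with the structural constraint on minimal separating sets recorded in \Cref{MinSepUnion}. The governing observation at the outset is that, since $G$ is not a cyclic group of prime power order, \Cref{MinDegpPower} yields the \emph{strict} inequality $\delta(\mathcal{G}(G)) < |G|-1$. In particular the minimum-degree vertex $v$ is not adjacent to every other vertex, and $v \ne e$, since the identity is a power of every element and hence satisfies $\deg(e) = |G|-1$.

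For part (i), I would verify directly that $N(v)$ separates $\mathcal{G}(G)$. Deleting $N(v)$ leaves $v$ isolated; and because $|N(v)| = \delta(\mathcal{G}(G)) < |G|-1$, the set $G \setminus (N(v)\cup\{v\})$ is nonempty, so $\mathcal{G}(G) - N(v)$ has at least two vertices with $v$ isolated and is therefore disconnected. Thus $N(v)$ is a separating set of size $\delta(\mathcal{G}(G))$. The hypothesis $\kappa(\mathcal{G}(G)) = \delta(\mathcal{G}(G))$ says no separating set can have fewer than $|N(v)|$ vertices, so $N(v)$ is in fact a minimum separating set.

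For part (ii), the key move is to note that a minimum separating set is automatically minimal (a strictly smaller separating subset would violate minimum cardinality), so \Cref{MinSepUnion} applies to $T = N(v)$. I would invoke it with $x = v$. On one hand, every $y \in [v]$ with $y \ne v$ satisfies $\langle y\rangle = \langle v\rangle$, hence $y$ is a power of $v$ and so $y \in N(v)$; that is, $[v]\setminus\{v\} \subseteq N(v)$. On the other hand $v \notin N(v)$, so $[v]\not\subseteq N(v)$, and the dichotomy of \Cref{MinSepUnion} then forces $[v]\cap N(v)=\emptyset$. Combining these two memberships gives $[v]=\{v\}$, i.e.\ $\langle v\rangle$ has a unique generator, so $\phi(o(v))=1$ and hence $o(v)\in\{1,2\}$. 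The case $o(v)=1$ is ruled out because it would make $v=e$ with $\deg(v)=|G|-1$, contradicting $\delta(\mathcal{G}(G))<|G|-1$. Therefore $o(v)=2$, and the existence of an element of order $2$ forces $|G|$ to be even by Lagrange's theorem.

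The separation argument in (i) and the identity-degree count are routine; the conceptual heart, and the step I expect to require the most care, is recognizing that $N(v)$ is not merely a separating set but a \emph{minimal} one, so that the dichotomy of \Cref{MinSepUnion} can be turned on the class $[v]$ to pin down $\phi(o(v))=1$.
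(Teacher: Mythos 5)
Your proposal is correct and follows essentially the same route as the paper's proof: show $N(v)$ is a separating set of size $\delta(\mathcal{G}(G))=\kappa(\mathcal{G}(G))$ (hence minimum, hence minimal), then apply \Cref{MinSepUnion} with $[v]\setminus\{v\}\subseteq N(v)$ and $v\notin N(v)$ to force $\lvert[v]\rvert=1$, so $\phi(o(v))=1$, rule out $o(v)=1$ via the identity's full degree, and finish with Lagrange's theorem. The only cosmetic difference is that you enter through \Cref{MinDegpPower} to get $\delta(\mathcal{G}(G))<|G|-1$ up front, whereas the paper derives the same fact from \Cref{CompleteCond}(ii) by contradiction inside part (i).
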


\begin{proof}
(i) Let $|G|=n$. By \Cref{CompleteCond}, $\mathcal{G}(G)$ is not a complete graph. Let $v \in G$ be the vertex such that $\delta(\mathcal{G}(G)) = \deg(v)$.

If possible let $N(v)=G-v$. Then $\delta(\mathcal{G}(G))=\deg(v)=n-1$, which implies $\deg(x)=n-1$ for all $x \in G$. This is not possible, as it would mean that $\mathcal{G}(G)$ is a complete graph. So $N(v) \neq G-v$, i.e. there exists at least one vertex $u$ non-adjacent to $v$ in $\mathcal{G}(G)$. Thus there does not exist any path from $u$ to $v$ in $\mathcal{G}(G)-N(v)$, and hence $N(v)$ is a separating set of $\mathcal{G}(G)$. Further, $|N(v)|=\delta(\mathcal{G}(G))=\kappa(\mathcal{G}(G))$. Thus we conclude that $N(v)$ is a minimum separating set of $\mathcal{G}(G)$.

\noindent
(ii) From the proof of (i), $N(v)$ is a minimal separating set of $\mathcal{G}(G)$. So it follows from \Cref{MinSepUnion} that either $[v] \subseteq N(v)$ or $[v] \cap N(v) = \emptyset$. However, $[v]-\{ v \} \subseteq N(v)$ and $v \notin N(v)$. This is possible only when $[v]-\{ v \}= \emptyset$, i.e. $\big|[v]\big|=1$. So, as $\big|[v]\big|=\phi(o(v))$, we have $o(v)=1$ or $o(v)=2$. If $o(v)=1$, then $v=e$, where $e$ is the identity element of $G$. But $N(e)=G-e$, which will in turn implies that $\mathcal{G}(G)$ is complete. Hence we have $o(v)=2$. Moreover, order of an element divides order of the group in a finite group, so that $|G|$ is even.
\end{proof}

\begin{lemma}\label{EulerSum}
Let $n \in \mathbb{N}$, $p$ be a prime factor of $n$ and $\alpha$ be the largest integer such that $p^\alpha | n$. Then for any integer $\beta$ satisfying $1 \leq \beta \leq \alpha$,
\begin{equation*}
\sum \limits_{d \bigm | \frac{n}{p^\beta}}\phi \left( \frac{n}{d} \right)=n - \frac{n}{p^{\alpha-\beta+1}}.
\end{equation*}

\end{lemma}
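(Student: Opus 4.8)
The plan is to reindex the sum by the involution $d \mapsto n/d$ on the divisor lattice of $n$ and then evaluate the result by complementary counting against the classical identity $\sum_{d \mid N}\phi(d) = N$. First I would write $n = p^{\alpha}m$ with $\gcd(m,p) = 1$, so that $n/p^{\beta} = p^{\alpha-\beta}m$. As $d$ ranges over the divisors of $n/p^{\beta}$, the quantity $D := n/d$ ranges over precisely those divisors of $n$ that are divisible by $p^{\beta}$: indeed, $d \mid n/p^{\beta}$ forces $d \mid n$, whence $D = n/d$ divides $n$, while $D/p^{\beta} = (n/p^{\beta})/d$ is then an integer, so $p^{\beta} \mid D$; conversely, any divisor $D$ of $n$ with $p^{\beta} \mid D$ arises this way from $d = n/D \mid n/p^{\beta}$. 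Hence the sum in question equals $\sum_{D \mid n,\, p^{\beta} \mid D} \phi(D)$.

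Next I would pass to the complement. The divisors $D$ of $n$ with $p^{\beta} \nmid D$ are exactly those whose $p$-adic valuation is at most $\beta - 1$; since $n = p^{\alpha}m$, these are precisely the divisors of $p^{\beta-1}m = n/p^{\alpha-\beta+1}$. Therefore
\begin{equation*}
\sum_{\substack{D \mid n \\ p^{\beta} \mid D}} \phi(D) = \sum_{D \mid n} \phi(D) - \sum_{D \mid \frac{n}{p^{\alpha-\beta+1}}} \phi(D).
\end{equation*}
Applying the identity $\sum_{d \mid N}\phi(d) = N$ to each term on the right, with $N = n$ and $N = n/p^{\alpha-\beta+1}$ respectively, yields $n - n/p^{\alpha-\beta+1}$, which is the claimed value.

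I expect the only delicate points to be the two combinatorial identifications of index sets: first, that $d \mapsto n/d$ sends the divisors of $n/p^{\beta}$ bijectively onto the divisors of $n$ that are multiples of $p^{\beta}$, and second, that a divisor of $n$ fails to be a multiple of $p^{\beta}$ exactly when it divides $n/p^{\alpha-\beta+1}$. Both reduce to bookkeeping with $p$-adic valuations once $n$ is written as $p^{\alpha}m$, and the hypothesis $1 \leq \beta \leq \alpha$ guarantees that $\alpha - \beta + 1 \geq 1$, so that $n/p^{\alpha-\beta+1}$ is a genuine positive integer dividing $n$. As an alternative to the complementary-counting step, one could substitute $d = p^{i}e$ with $0 \leq i \leq \alpha-\beta$ and $e \mid m$, factor $\phi(n/d) = \phi(p^{\alpha-i})\phi(m/e)$ by multiplicativity of $\phi$ (note $\alpha - i \geq \beta \geq 1$, so this factorization is valid), use $\sum_{e \mid m}\phi(m/e) = m$, and telescope $\sum_{j=\beta}^{\alpha}(p^{j} - p^{j-1}) = p^{\alpha} - p^{\beta-1}$; this gives the same answer $m(p^{\alpha} - p^{\beta-1}) = n - n/p^{\alpha-\beta+1}$.
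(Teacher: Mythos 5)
Your proposal is correct, and your primary argument takes a genuinely different route from the paper's. The paper sets $m = n/p^{\alpha}$, decomposes each divisor of $n/p^{\beta}$ as $p^{i}d$ with $0 \leq i \leq \alpha-\beta$ and $d \mid m$, factors $\phi\bigl(n/(p^{i}d)\bigr) = \phi(p^{\alpha-i})\,\phi(m/d)$ by multiplicativity, applies $\sum_{d \mid m}\phi(m/d) = m$ once, and telescopes $\sum_{j=\beta}^{\alpha}\phi(p^{j}) = p^{\alpha}-p^{\beta-1}$; this is precisely the ``alternative'' you sketch in your final sentence. Your main route instead pushes the sum through the involution $d \mapsto n/d$, identifying it with $\sum_{D \mid n,\, p^{\beta}\mid D}\phi(D)$, and then removes the complementary divisors --- exactly the divisors of $n/p^{\alpha-\beta+1}$ --- by two applications of Gauss's identity $\sum_{d \mid N}\phi(d) = N$. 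Both of the index-set identifications you flag as the delicate points are correct, and your valuation bookkeeping (including the role of the hypothesis $1 \leq \beta \leq \alpha$) is sound. What your route buys: it needs no explicit multiplicativity of $\phi$ and no geometric telescoping, only Gauss's identity plus divisor-lattice bookkeeping, so it is arguably the cleaner and more conceptual proof of this particular identity. What the paper's route buys: its decompose-and-factor template is the same one driving the other Euler-phi computations in the paper (for instance in the proof of \Cref{degcompare}), so it keeps the computational toolkit uniform across the paper.
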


\begin{proof}
Taking $m=\dfrac{n}{p^\alpha}$, we have
\begin{align*}
 \sum \limits_{d|\frac{n}{p^\beta}} \phi \left( \frac{n}{d} \right) & =  \sum \limits_{d|m }\phi \left( \frac{n}{d} \right) +  \sum \limits_{d|m }\phi \left( \frac{n}{pd} \right) + \ldots +  \sum \limits_{d|m }\phi \left( \frac{n}{p^{\alpha-\beta} d} \right)\\
&=  \left \{ \phi(p^\alpha) + \phi(p^{\alpha-1}) + \ldots + \phi(p^\beta) \right \} \sum \limits_{d|m }\phi \left( \frac{m}{d} \right)\\
&= (p^\alpha - p^{\beta-1})m\\
& =n - \frac{n}{p^{\alpha-\beta+1}}
\end{align*}

\end{proof}

In the next theorem, when $\kappa(\mathcal{G}(\mathbb{Z}_n))$ and $\delta(\mathcal{G}(\mathbb{Z}_n))$ are equal, we find their common value, and also the  minimum separating set and minimum disconnecting set of  $\mathcal{G}(\mathbb{Z}_n)$.
\begin{theorem}\label{ConDegEqZn1}
If $n \in \mathbb{N}$ is not a prime power and $\kappa(\mathcal{G}(\mathbb{Z}_n))=\delta(\mathcal{G}(\mathbb{Z}_n))=k$ (say), then $k=deg \left ( \overline{\frac{n}{2}} \right )=n - \frac{n}{2^\alpha}$, where $\alpha$ is the largest integer such that $2^\alpha | n$.
\end{theorem}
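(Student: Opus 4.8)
Assume $n$ is not a prime power and that the equality $\kappa(\mathcal{G}(\mathbb{Z}_n))=\delta(\mathcal{G}(\mathbb{Z}_n))=k$ holds. By \Cref{ConDegEq}(ii), applied to $G=\mathbb{Z}_n$ (which is not a cyclic group of prime power order, since $n$ is not a prime power), any vertex $v$ achieving the minimum degree must have order $2$ in $\mathbb{Z}_n$. Now the additive group $\mathbb{Z}_n$ has a unique element of order $2$ precisely when $n$ is even, namely $\overline{n/2}$, and no element of order $2$ when $n$ is odd. Since \Cref{ConDegEq}(ii) guarantees such an element exists, $n$ must be even, and the unique minimum-degree vertex is $v=\overline{n/2}$. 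Hence $k=\delta(\mathcal{G}(\mathbb{Z}_n))=\deg\left(\overline{n/2}\right)$, and it remains only to compute this degree.

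**Computing the degree via \Cref{verdeg}.**
The plan is to apply \Cref{verdeg} directly to $\overline{a}$ with $a=n/2$. First I would verify the hypotheses: $n$ is not a prime power (given), and $\overline{n/2}\in\widetilde{\mathbb{Z}}_n$ since $\overline{n/2}\notin\mathcal{S}(\mathbb{Z}_n)$ (as $\gcd(n/2,n)=n/2>1$). Let $\alpha$ be the largest integer with $2^\alpha\mid n$. Computing $b=\gcd(n/2,n)=n/2$, \Cref{verdeg} yields
\begin{equation*}
\deg\left(\overline{\tfrac{n}{2}}\right)=\frac{n}{b}+\sum_{d\mid b,\,d\neq b}\phi\left(\frac{n}{d}\right)-1=2+\sum_{d\mid \frac{n}{2},\,d\neq \frac{n}{2}}\phi\left(\frac{n}{d}\right)-1.
\end{equation*}

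**Evaluating the divisor sum.**
The main computational step is to evaluate $\sum_{d\mid n/2}\phi(n/d)$ and strip off the $d=n/2$ term. Here \Cref{EulerSum} does the heavy lifting: taking $\beta=1$ (so that $n/p^\beta=n/2$) and $p=2$ gives
\begin{equation*}
\sum_{d\mid \frac{n}{2}}\phi\left(\frac{n}{d}\right)=n-\frac{n}{2^{\alpha}}.
\end{equation*}
The excluded term $d=n/2$ contributes $\phi\!\left(n/(n/2)\right)=\phi(2)=1$. Subtracting this and inserting into the expression above gives
\begin{equation*}
\deg\left(\overline{\tfrac{n}{2}}\right)=2+\left(n-\frac{n}{2^{\alpha}}-1\right)-1=n-\frac{n}{2^{\alpha}},
\end{equation*}
which is the claimed value of $k$.

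**Where the difficulty lies.**
The genuinely substantive step is the invocation of \Cref{ConDeg Eq}(ii) to force $o(v)=2$ and thereby pin down $v=\overline{n/2}$ uniquely; everything after that is a routine but careful bookkeeping exercise in Euler's phi function, for which \Cref{verdeg} and \Cref{EulerSum} are tailor-made. The only subtlety to watch is the correct handling of the $d=n/2$ boundary term in the divisor sum (and confirming $n$ must be even so that $\overline{n/2}$ is genuinely the order-$2$ element), but no essentially new idea is required beyond assembling the two cited lemmas.
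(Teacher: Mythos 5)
Your proposal is correct and follows essentially the same route as the paper: invoke \Cref{ConDegEq}(ii) to force the minimum-degree vertex to have order $2$, use $\phi(2)=1$ (equivalently, uniqueness of the order-$2$ element $\overline{n/2}$ in an even cyclic group) to identify it, then compute $\deg\left(\overline{\tfrac{n}{2}}\right)$ via \Cref{verdeg} and evaluate the divisor sum with \Cref{EulerSum} at $p=2$, $\beta=1$. The only cosmetic difference is that you spell out explicitly that $n$ must be even, which the paper leaves implicit in the ``consequently $G$ is of even order'' clause of \Cref{ConDegEq}(ii).
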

\begin{proof}
Notice that there exists a vertex, say $\overline{a} \in \mathbb{Z}_n$, such that $\deg(\overline{a})=k$. Then by \Cref{ConDegEq}(ii), $o(\overline{a})=2$. Moreover, $o \left (\overline{\displaystyle \frac{n}{2}} \right)=2$, and if $d|n$, number of elements of order $d$ is $\phi(d)$ \cite[Theorem 4.4]{Gallian}. Since $\phi(2)=1$, $\overline{\displaystyle \frac{n}{2}}$ is the only element of order $2$ in $\mathbb{Z}_n$ and hence $\overline{a}=\overline{\displaystyle \frac{n}{2}}$. Furthermore, from \Cref{verdeg},
\begin{align*}
k = deg \left ( \overline{\frac{n}{2}} \right ) & = 1 + \sum \limits_{d|\frac{n}{2},d \neq \frac{n}{2}}\phi \left( \frac{n}{d} \right)\\
& = 1 + \sum \limits_{d|\frac{n}{2} }\phi \left( \frac{n}{d} \right)-\phi(2)\\
& = \sum \limits_{d|\frac{n}{2} }\phi \left( \frac{n}{d} \right)\\
& = n - \frac{n}{2^\alpha} \hspace{5pt} (\text{by } \Cref{EulerSum})
\end{align*}
\end{proof}

The following corollary is a consequence of \Cref{MinDisconSet}, \Cref{ConDegEq}(i) and \Cref{ConDegEqZn1}.

\begin{corollary}\label{MinSetZn}
Let $n \in \mathbb{N}$ and $n$ is not a prime power. If $\kappa(\mathcal{G}(\mathbb{Z}_n))=\delta(\mathcal{G}(\mathbb{Z}_n))$, then $\{ \overline{0} \} \cup \displaystyle \bigcup_{a \mid \frac{n}{2}, a \neq \frac{n}{2}} [\overline{a}]$, say $A$, is a minimum separating set and $E\left[\overline{\frac{n}{2}}, A\right]$ is a minimum disconnecting set of  $\mathcal{G}(\mathbb{Z}_n)$.
\end{corollary}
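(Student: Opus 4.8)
The plan is to obtain the corollary by assembling the three cited results, the only genuine computation being to make the neighbourhood of $\overline{n/2}$ explicit. I would start by checking that the hypotheses of \Cref{ConDegEq} and \Cref{ConDegEqZn1} are met: since $n$ is not a prime power, $\mathbb{Z}_n$ is not a cyclic group of prime power order, so by \Cref{CompleteCond}(ii) its power graph is non-complete and \Cref{ConDegEq} applies; and the assumption $\kappa(\mathcal{G}(\mathbb{Z}_n))=\delta(\mathcal{G}(\mathbb{Z}_n))$ is precisely what both theorems require. \Cref{ConDegEqZn1} then identifies the minimum-degree vertex as $v=\overline{n/2}$, the unique element of order $2$, with $\delta(\mathcal{G}(\mathbb{Z}_n))=\deg(\overline{n/2})$.

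Fixing $v=\overline{n/2}$ as this minimum-degree vertex, \Cref{ConDegEq}(i) gives at once that $N(\overline{n/2})$ is a minimum separating set, while \Cref{MinDisconSet} gives that $E[\overline{n/2},N(\overline{n/2})]$ is a minimum disconnecting set. Thus the whole corollary collapses to the single set identity $N(\overline{n/2})=A$, where $A=\{\overline{0}\}\cup\bigcup_{a\mid n/2,\,a\neq n/2}[\overline{a}]$; once this is established, substituting $A$ for $N(\overline{n/2})$ in the two conclusions finishes the proof.

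To verify $N(\overline{n/2})=A$ I would work through adjacency in terms of generated subgroups. A vertex $\overline{y}$ distinct from $\overline{n/2}$ is adjacent to $\overline{n/2}$ exactly when $\overline{n/2}\in\langle\overline{y}\rangle$ or $\overline{y}\in\langle\overline{n/2}\rangle$. Because $\langle\overline{n/2}\rangle=\{\overline{0},\overline{n/2}\}$, the second possibility contributes only $\overline{0}$. For the first, writing $b=\gcd(y,n)$ so that $\langle\overline{y}\rangle=\langle\overline{b}\rangle$ is the set of multiples of $b$, the condition $\overline{n/2}\in\langle\overline{b}\rangle$ is equivalent to $b\mid n/2$. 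Hence $N(\overline{n/2})=\{\overline{0}\}\cup\{\overline{y}:y\neq n/2,\ \gcd(y,n)\mid n/2\}$. On the other side, since $[\overline{a}]=\{\overline{y}:\gcd(y,n)=a\}$ for a divisor $a$ of $n$, the union in $A$ is exactly $\{\overline{y}:\gcd(y,n)\mid n/2,\ \gcd(y,n)\neq n/2\}$. The two descriptions differ only in the residual class with $\gcd(y,n)=n/2$; but that class equals $[\overline{n/2}]=\{\overline{n/2}\}$ since $\phi(2)=1$, and this single vertex is excluded on both sides, from $A$ because the index $a=n/2$ is dropped and from $N(\overline{n/2})$ because $y\neq n/2$ is imposed. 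Therefore $N(\overline{n/2})=A$.

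The argument is chiefly bookkeeping, and I do not foresee a serious obstacle. The one place that rewards care is this last identity: I must confirm that the only vertex $\overline{y}$ with $\gcd(y,n)=n/2$ is $\overline{n/2}$ itself, so that omitting the index $a=n/2$ from $A$ deletes precisely the vertex $v$ and nothing more; this in turn rests on $\overline{n/2}$ being the unique element of order $2$ in $\mathbb{Z}_n$, which is exactly what \Cref{ConDegEqZn1} provides.
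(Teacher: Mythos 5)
Your proposal is correct and follows exactly the route the paper intends: the corollary is stated there as an immediate consequence of \Cref{ConDegEqZn1} (identifying the minimum-degree vertex as $\overline{n/2}$), \Cref{ConDegEq}(i) (making $N(\overline{n/2})$ a minimum separating set), and \Cref{MinDisconSet} (making $E\left[\overline{n/2},N(\overline{n/2})\right]$ a minimum disconnecting set). Your explicit verification of the identity $N(\overline{n/2})=A$, including the observation that the only element with $\gcd(y,n)=n/2$ is $\overline{n/2}$ itself because $\phi(2)=1$, is precisely the bookkeeping the paper leaves implicit, and it is carried out correctly.
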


\begin{lemma}[{\cite[Theorem 2.27]{ConPowerGr17}}]\label{ConnValue}
If $n=p^\alpha q^\beta$, $p,q$ are distinct primes and $\alpha,\beta \in \mathbb{N}$, then $\kappa(\mathcal{G}(\mathbb{Z}_n)) = \phi(n)+p^{\alpha-1}q^{\beta-1}$.
\end{lemma}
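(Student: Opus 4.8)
The plan is to exploit the ``blow-up'' structure of $\mathcal{G}(\mathbb{Z}_n)$. Writing $n=p^\alpha q^\beta$, recall that in a cyclic group two elements are adjacent exactly when the order of one divides the order of the other, that the $\phi(d)$ elements of a fixed order $d\mid n$ form a clique, and that two such cliques are completely joined precisely when one order divides the other. Thus $\mathcal{G}(\mathbb{Z}_n)$ is the blow-up of the divisibility (comparability) graph of the divisor lattice of $n$ — a product of two chains — the node $d$ being replaced by a clique of size $\phi(d)$. Since by \Cref{MinSepUnion} every minimal separating set is a union of $\approx$-classes, and an $\approx$-class is exactly the set of elements of a given order, a minimum separating set corresponds to a set of divisors whose deletion disconnects the comparability graph, at cost $\sum\phi(d)$ over the deleted divisors. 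I would run the whole argument at this node level.

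For the upper bound I take $C=\{x:o(x)=n\}\cup\langle\overline{pq}\rangle$, i.e. the $\phi(n)$ generators together with the subgroup of order $p^{\alpha-1}q^{\beta-1}$ (equivalently, the elements whose order divides $p^{\alpha-1}q^{\beta-1}$). After deleting $C$ the surviving orders are exactly $p^\alpha q^{j}$ with $j\le\beta-1$ and $p^{i}q^\beta$ with $i\le\alpha-1$; a divisor of the first kind and a divisor of the second kind are always incomparable, so the survivors split into two parts, one containing the elements of order $p^\alpha$ and the other those of order $q^\beta$, whence $C$ is a separating set. Using $\sum_{i=0}^{\alpha-1}\phi(p^i)=p^{\alpha-1}$ and $\sum_{j=0}^{\beta-1}\phi(q^j)=q^{\beta-1}$, its size is $|C|=\phi(n)+\sum_{i=0}^{\alpha-1}\sum_{j=0}^{\beta-1}\phi(p^i)\phi(q^j)=\phi(n)+p^{\alpha-1}q^{\beta-1}$, giving $\kappa(\mathcal{G}(\mathbb{Z}_n))\le\phi(n)+p^{\alpha-1}q^{\beta-1}$.

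For the lower bound I first note that $\overline0$ and every generator are adjacent to all other vertices (\Cref{szn-adj}); hence if one of them survived a deletion the remaining graph would stay connected, so every separating set must contain all of $\mathcal{S}(\mathbb{Z}_n)$, accounting for $\phi(n)+1$ vertices. To recover the full bound I would use Menger's theorem, arguing that $\kappa(\mathcal{G}(\mathbb{Z}_n))$ is realized by the incomparable pair $u=\overline{q^\beta}$ (order $p^\alpha$) and $v=\overline{p^\alpha}$ (order $q^\beta$), and exhibiting $\phi(n)+p^{\alpha-1}q^{\beta-1}$ internally disjoint $u$–$v$ paths. The length-two paths through $\overline0$ and through each generator supply $\phi(n)+1$ of them; through each of the remaining $p^{\alpha-1}q^{\beta-1}-1$ nonidentity $w\in\langle\overline{pq}\rangle$ I route one further path $u-x-w-y-v$, taking $x$ of order $p^\alpha q^{t}$ and $y$ of order $p^{s}q^\beta$ when $o(w)=p^sq^t$ (with the obvious length-three path when $w$ has prime-power order). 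Because these auxiliary vertices have order strictly below $n$ in one coordinate, they avoid the generators and $\overline0$.

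The main obstacle is exactly this last step: choosing all the auxiliary vertices simultaneously so that the paths are pairwise internally disjoint, and, in the same spirit, checking that every other incomparable pair of orders admits at least as many disjoint paths (so that the minimum over pairs is attained at $u,v$). Both reduce to comparing, for each intermediate order, the number of path-requests with the available $\phi$-count; the inequalities $p^\alpha-p^{\alpha-1}\ge p^{\alpha-1}-1$ and $q^\beta-q^{\beta-1}\ge q^{\beta-1}$ make a Hall-type condition hold pool by pool, but organizing this bookkeeping uniformly across all pairs is the delicate part. Combining the two bounds yields $\kappa(\mathcal{G}(\mathbb{Z}_n))=\phi(n)+p^{\alpha-1}q^{\beta-1}$.
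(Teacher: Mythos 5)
First, a contextual note: the paper never proves \Cref{ConnValue}; it is imported verbatim from \cite[Theorem 2.27]{ConPowerGr17}, so your attempt can only be judged on its own merits. Much of it is sound: adjacency in $\mathcal{G}(\mathbb{Z}_n)$ is indeed governed by divisibility of orders, \Cref{MinSepUnion} does reduce minimum separating sets to weighted node cuts of the divisor grid, and your cut --- the $\phi(n)$ generators together with $\langle \overline{pq}\rangle$, i.e.\ all elements of order dividing $p^{\alpha-1}q^{\beta-1}$ --- does disconnect the surviving vertices (orders $p^\alpha q^j$, $j<\beta$, versus orders $p^iq^\beta$, $i<\alpha$, are pairwise incomparable), with the correct count. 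So $\kappa(\mathcal{G}(\mathbb{Z}_n))\le\phi(n)+p^{\alpha-1}q^{\beta-1}$ is fully established.

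The genuine gap is the lower bound, and it is not mere bookkeeping. Menger's theorem gives $\kappa$ as the \emph{minimum} of the local connectivities over all non-adjacent pairs, so a system of $\phi(n)+p^{\alpha-1}q^{\beta-1}$ internally disjoint paths joining the single pair $u=\overline{q^\beta}$, $v=\overline{p^\alpha}$ bounds only $\kappa(u,v)$; it does not preclude a cheaper cut separating some other pair. (For that one pair your pool inequalities do work: the demand on the class of order $p^\alpha q^t$ is $p^{\alpha-1}\phi(q^t)\le\phi(p^\alpha)\phi(q^t)$, and similarly on $p^sq^\beta$, so the disjoint system exists.) That local connectivity genuinely varies from pair to pair is visible already at $n=12$: elements of orders $4$ and $3$ admit exactly $6=\kappa(\mathcal{G}(\mathbb{Z}_{12}))$ disjoint paths, whereas elements of orders $2$ and $3$ admit $7$ (their common neighbours are $\overline{0}$, the two elements of order $6$ and the four generators). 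Hence the step you flag as ``the delicate part'' --- that the minimum over pairs is attained at $(u,v)$ --- is precisely the content of the theorem, and it is left unproved; what you actually prove is only $\kappa\ge\phi(n)+1$ (every separating set contains $\mathcal{S}(\mathbb{Z}_n)$, by \Cref{szn-adj}), which settles the case $\alpha=\beta=1$ but nothing beyond it. The natural way to close the gap is the grid-level reduction you set up and then abandon in favour of element-level Menger: using \Cref{MinSepUnion}, it suffices to show that every set $D$ of divisors of $n$ containing $1$ and $n$ whose deletion disconnects the comparability graph of the divisor grid satisfies $\sum_{d\in D}\phi(d)\ge\phi(n)+p^{\alpha-1}q^{\beta-1}$; this is a clean finite statement, but it still requires an argument covering an arbitrary disconnected remainder, not just the one produced by your cut.
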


We now obtain an equivalent condition for the equality of connectivity and minimum degree of power graph of $\mathcal{G}(\mathbb{Z}_n)$ in terms $n$.

\begin{theorem}\label{ConDegEqZn2}
For $n \in \mathbb{N}$, $\kappa(\mathcal{G}(\mathbb{Z}_n))=\delta(\mathcal{G}(\mathbb{Z}_n))$ if and only if $n=p^\alpha$ for some prime $p$ and $\alpha \in \mathbb{N}$ or $n=2q^\beta$ for some prime $q>2$ and $\beta \in \mathbb{N}$.
\end{theorem}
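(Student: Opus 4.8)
The plan is to prove both implications, handling the two admissible families $n=p^\alpha$ and $n=2q^\beta$ by explicit evaluation of $\kappa$ and $\delta$, and excluding every other $n$ by means of the necessary conditions already in hand. For sufficiency, if $n=p^\alpha$ then $\mathcal{G}(\mathbb{Z}_n)$ is complete by \Cref{CompleteCond}(ii), so $\kappa=\delta=n-1$. If $n=2q^\beta$ with $q>2$ prime, I would compute both quantities directly: \Cref{ConnValue} (with the prime $2$ carrying exponent $1$) gives $\kappa(\mathcal{G}(\mathbb{Z}_n))=\phi(n)+q^{\beta-1}=q^\beta$, while \Cref{MinDegValue}(i) gives $\delta(\mathcal{G}(\mathbb{Z}_n))=(q^\beta-1)\phi(2)+2-1=q^\beta$, so the two agree.

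For necessity, assume $\kappa(\mathcal{G}(\mathbb{Z}_n))=\delta(\mathcal{G}(\mathbb{Z}_n))$ and that $n$ is not a prime power. Then $\mathbb{Z}_n$ is not cyclic of prime power order, so \Cref{ConDegEq}(ii) forces $n$ to be even; write $n=2^{\alpha}p_2^{\alpha_2}\cdots p_r^{\alpha_r}$ with $2<p_2<\cdots<p_r$ and $r\geq 2$. The argument splits on $r$. When $r=2$, i.e.\ $n=2^{\alpha}q^{\beta}$, the same two formulas yield $\kappa=2^{\alpha-1}q^{\beta}$ and $\delta=2^{\alpha-1}q^{\beta}+2^{\alpha-1}-1$, whose difference is $2^{\alpha-1}-1$. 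Hence $\kappa=\delta$ forces $\alpha=1$, that is $n=2q^{\beta}$, exactly as claimed.

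The crux is to show that $r\geq 3$ cannot occur. By \Cref{ConDegEqZn1}, the equality $\kappa=\delta$ forces the minimum degree to be attained at the unique order-$2$ element $\overline{n/2}$, whose degree equals $n-n/2^{\alpha}$ by \Cref{verdeg} and \Cref{EulerSum}. To contradict this I would compare $\overline{n/2}$ with $\overline{p_r^{\alpha_r}}$, whose degree is the sharp upper bound $\eta_1(n)$ of \Cref{mindeg} and therefore satisfies $\deg(\overline{p_r^{\alpha_r}})\geq \delta$. Writing $L=n/p_r^{\alpha_r}$ and letting $L'=L/2^{\alpha}$ be the odd part of $L$ (which exceeds $1$ precisely because $r\geq 3$), a direct expansion gives
\[
\deg\!\left(\overline{\tfrac{n}{2}}\right)-\deg\!\left(\overline{p_r^{\alpha_r}}\right)=2^{\alpha-1}\bigl(p_r^{\alpha_r}-1\bigr)\bigl(2L'-\phi(L')\bigr)-p_r^{\alpha_r}L'+1.
\]

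I expect establishing the positivity of this expression to be the main obstacle, and the cleanest route is to split on $\alpha$. For $\alpha\geq 2$ the factor $2^{\alpha-1}\geq 2$ together with $2L'-\phi(L')\geq L'$ makes the leading term at least $2(p_r^{\alpha_r}-1)L'\geq p_r^{\alpha_r}L'$, so the whole difference is positive outright. For $\alpha=1$ the expression collapses to $L'(p_r^{\alpha_r}-2)-(p_r^{\alpha_r}-1)\phi(L')+1$; here I would use $\phi(L')\leq L'(1-1/p_2)$, which reduces positivity to the elementary inequality $p_r^{\alpha_r}\geq p_2+1$, itself immediate since $p_r>p_2$. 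In either case $\deg(\overline{p_r^{\alpha_r}})<\deg(\overline{n/2})$, hence $\delta<\deg(\overline{n/2})$, contradicting \Cref{ConDegEqZn1}. Thus $r\geq 3$ is impossible, and the only surviving possibilities are $n=p^{\alpha}$ and $n=2q^{\beta}$, completing the characterization.
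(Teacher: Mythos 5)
Your proposal is correct, and its necessity argument takes a genuinely different route from the paper's, so it is worth comparing. Both proofs share the same outer skeleton: sufficiency via \Cref{CompleteCond}(ii) for $n=p^\alpha$ and via \Cref{ConnValue} together with \Cref{MinDegValue}(i) for $n=2q^\beta$ (identical to the paper), and necessity launched from \Cref{ConDegEq}(ii). But the decompositions diverge. The paper writes $n=2^{\alpha_1}p_2^{\alpha_2}\cdots p_r^{\alpha_r}$, invokes \Cref{ConDegEqZn1} once and for all, and compares $\overline{n/2}$ against the two cofactor divisors $\overline{n/2^{\alpha_1}}$ and $\overline{n/(2p_2^{\alpha_2})}$; these are chosen so that \Cref{EulerSum} makes each difference factor exactly, as $\bigl(\tfrac{n}{2^{\alpha_1}}-1\bigr)\bigl(2^{\alpha_1-1}-1\bigr)$ and $1+p_2^{\alpha_2-1}\bigl\{\tfrac{m}{p_2^{\alpha_2}}-(p_2+1)\bigr\}$, killing $\alpha_1>1$ first and then $r\geq 3$, with no estimates at all. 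You instead split on $r$: for $r=2$ you bypass \Cref{ConDegEqZn1} entirely and just subtract the closed formulas of \Cref{ConnValue} and \Cref{MinDegValue}(i), obtaining the exact defect $\delta-\kappa=2^{\alpha-1}-1$ (a small dividend: this quantifies the failure of equality for all $n=2^\alpha q^\beta$ with $\alpha\geq 2$, which the paper's contradiction argument does not give); for $r\geq 3$ you compare $\overline{n/2}$ with the small divisor $\overline{p_r^{\alpha_r}}$, whose degree is $\eta_1(n)$ from \Cref{mindeg}. I verified your identity $\deg\bigl(\overline{n/2}\bigr)-\deg\bigl(\overline{p_r^{\alpha_r}}\bigr)=2^{\alpha-1}(p_r^{\alpha_r}-1)(2L'-\phi(L'))-p_r^{\alpha_r}L'+1$ (for instance $n=60$ gives $45-27=18$, matching), and both positivity subcases are sound: for $\alpha\geq 2$, the bound $2L'-\phi(L')\geq L'$ yields a difference at least $(p_r^{\alpha_r}-2)L'+1>0$; for $\alpha=1$, the bound $\phi(L')\leq L'(1-1/p_2)$ (valid because $p_2\mid L'$ precisely when $r\geq 3$) reduces positivity to $p_r^{\alpha_r}\geq p_2+1$, which holds since $p_r>p_2$. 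The trade-off is clear: the paper's comparison vertices make every step an exact factorization, while your $r\geq 3$ branch needs a coarser $\phi$-estimate and a subcase split on $\alpha$; in exchange, your $r=2$ branch is lighter than the paper's, and your structure disposes of all $r\geq 3$ in one stroke, uniformly in $\alpha$.
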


\begin{proof}
Let $\kappa(\mathcal{G}(\mathbb{Z}_n))=\delta(\mathcal{G}(\mathbb{Z}_n))$. By \Cref{CompleteCond}(ii), if $\mathcal{G}(\mathbb{Z}_n)$ is complete, then $n=p^\alpha$ for some prime $p$. Now suppose $\mathcal{G}(\mathbb{Z}_n)$ is not complete. So $n$ is not a prime power and hence by \Cref{ConDegEq}(ii), $n$ is even. Let $n=2^{\alpha_1}p_2^{\alpha_2}\ldots p_r^{\alpha_r}$, $r \geq 2$, $2 < p_2 < \cdots < p_r$ are primes and $\alpha_i \in \mathbb{N}$ for $1 \leq i \leq r$. Moreover, from \Cref{ConDegEqZn1}, $\delta(\mathcal{G}(\mathbb{Z}_n))=deg \left ( \overline{\displaystyle \frac{n}{2}} \right )$.

\begin{align*}
& \deg \left ( \overline{\frac{n}{2}} \right) -  \deg \left ( \overline{\frac{n}{2^{\alpha_1}}} \right)\\
& = 2 + \sum \limits_{d|\frac{n}{2}}\phi \left( \frac{n}{d} \right) - \left \{2^{\alpha_1} + \sum \limits_{d|\frac{n}{2^{\alpha_1}}}\phi \left( \frac{n}{d} \right) \right \} (\text{by } \Cref{verdeg}) \\
& = \left \{ 2 + n - \frac{n}{2^{\alpha_1}} - \phi(2) \right \}- \left \{ 2^{\alpha_1} + n -  \frac{n}{2} - \phi(2^{\alpha_1}) \right \} (\text{by } \Cref{EulerSum}) \\
& = \left ( 1- \frac{n}{2^{\alpha_1}}  \right )- \left (  2^{{\alpha_1}-1} - \frac{n}{2} \right ) \\
& = \left( \frac{n}{2} - \frac{n}{2^{\alpha_1}}  \right) - (2^{{\alpha_1}-1} -1) \\
& = \left( \frac{n}{2^{\alpha_1}}-1 \right) (2^{{\alpha_1}-1} -1)
\end{align*}
Since $r \geq 2$, we have $\displaystyle \frac{n}{2^{\alpha_1}} > 1$. So,  if ${\alpha_1} > 1$, then $2^{{\alpha_1}-1} >1$ and hence $\deg \left ( \overline{\displaystyle \frac{n}{2}} \right) > \deg \left ( \overline{\displaystyle \frac{n}{2^{\alpha_1}}} \right)$. This contradicts the fact that $\delta(\mathcal{G}(\mathbb{Z}_n))=deg \left ( \overline{\displaystyle \frac{n}{2}} \right )$. Thus $\alpha_1=1$, i.e. $n=2p_2^{\alpha_2}\ldots p_r^{\alpha_r}$.

Take $m=\displaystyle \frac{n}{2}$.  If possible let $r \geq 3$, so that $\displaystyle \frac{n}{2p_2^{\alpha_2}} \neq 1$.

\begin{align*}
&\deg \left ( \overline{\frac{n}{2}} \right) - \deg \left ( \overline{\frac{n}{2p_2^{\alpha_2}}} \right)\\
& = 2 +  \sum \limits_{d|\frac{n}{2},d \neq \frac{n}{2}}\phi \left( \frac{n}{d} \right) - \left \{ 2p_2^{\alpha_2} +  \sum \limits_{d|\frac{n}{2p_2^{\alpha_2}},d \neq \frac{n}{2p_2^{\alpha_2}}}\phi \left( \frac{n}{d} \right) \right \} (\text{by } \Cref{verdeg}) \\
& = 2 + \phi(2) \sum \limits_{d|m,d \neq m}\phi \left( \frac{m}{d} \right) - \left \{ 2p_2^{\alpha_2} + \phi(2) \sum \limits_{d|\frac{m}{p_2^{\alpha_2}},d \neq \frac{m}{p_2^{\alpha_2}}}\phi \left( \frac{m}{d} \right) \right \}\\
& = (2 + m -1)- \left \{ 2p_2^{\alpha_2} + m -  \frac{m}{p_2} - \phi(p_2^{\alpha_2}) \right \} (\text{by } \Cref{EulerSum}) \\
& = 1 + \frac{m}{p_2} - (p_2^{\alpha_2} + p_2^{\alpha_2-1}) \\
& = 1 + p_2^{\alpha_2-1} \left \{ \frac{m}{p_2^{\alpha_2}} - (p_2 + 1) \right \}
\end{align*}

Since $r \geq 3$, we have $\displaystyle \frac{m}{p_2^{\alpha_2}} \geq p_3 > p_2 + 1$. So $\deg \left ( \overline{\displaystyle \frac{n}{2}} \right) > \deg \left ( \overline{\displaystyle \frac{n}{2p_2^{\alpha_2}}} \right)$. This again contradicts the fact that $\delta(\mathcal{G}(\mathbb{Z}_n))=deg \left ( \overline{\displaystyle \frac{n}{2}} \right )$. Thus $r=2$, i.e. $n=2p_2^{\alpha_2}$. This completes the proof of forward implication.

We now prove the converse.  If $n=p^\alpha$ for some prime $p$ and $\alpha \in \mathbb{N}$, then by \Cref{CompleteCond}, $\mathcal{G}(\mathbb{Z}_n)$ is complete.  Thus $\kappa(\mathcal{G}(\mathbb{Z}_n))=\delta(\mathcal{G}(\mathbb{Z}_n))=n-1$. Now let $n=2q^\beta$ for some prime $q>2$ and $\beta \in \mathbb{N}$. Then by \Cref{MinDegValue}(i) and \Cref{ConnValue},  $\kappa(\mathcal{G}(\mathbb{Z}_n)) - \delta(\mathcal{G}(\mathbb{Z}_n)) =\phi(2q^\beta) + q^{\beta-1} - \{ 2+(2-1)(q^\beta-1)-1\} = q^\beta - q^\beta =0 $
\end{proof}

We now explore the relation between the connectivity and minimum degree of power graphs of some more groups.

\begin{theorem}
If $G$ be an abelian $p$-group, then $\kappa(\mathcal{G}(G))=\delta(\mathcal{G}(G))$ if and only if $\sigma(G)=1$ or $\tau(G)=2$.
\end{theorem}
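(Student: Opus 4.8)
The plan is to treat $G\cong\mathbb{Z}_{p^{\alpha_1}}\times\cdots\times\mathbb{Z}_{p^{\alpha_r}}$ with $\alpha_t=\min_i\alpha_i$, so that $\sigma(G)=r$ and $\tau(G)=p^{\alpha_t}$, and to use that $\delta(\mathcal{G}(G))=p^{\alpha_t}-1$ is already known from \Cref{DegAbelianp}. The sufficiency is quick. If $\sigma(G)=1$, then $G$ is cyclic of prime power order, so $\mathcal{G}(G)$ is complete by \Cref{CompleteCond}(ii) and $\kappa(\mathcal{G}(G))=\delta(\mathcal{G}(G))=|G|-1$. If $\tau(G)=2$, then $\delta(\mathcal{G}(G))=1$; since $\mathcal{G}(G)$ is connected by \Cref{CompleteCond}(i) and has at least two vertices, $\kappa(\mathcal{G}(G))\ge 1$, while $\kappa(\mathcal{G}(G))\le\delta(\mathcal{G}(G))=1$ by \Cref{ConDeglneqPower}, forcing equality.

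For necessity I would argue by contraposition: assume $\sigma(G)\ge 2$ and $\tau(G)\ge 3$, and show $\kappa(\mathcal{G}(G))\ne\delta(\mathcal{G}(G))$. Since $\sigma(G)\ge 2$, $G$ is not cyclic, hence not a cyclic group of prime power order, so \Cref{ConDegEq} applies. If $\kappa(\mathcal{G}(G))=\delta(\mathcal{G}(G))$ held, \Cref{ConDegEq}(ii) would force $|G|$ even and would realize the minimum degree at an element of order $2$. When $p$ is odd this is immediately contradictory, because $|G|$ is then odd. Hence it only remains to rule out $p=2$, in which case $\tau(G)=2^{\alpha_t}\ge 3$ forces $\alpha_t\ge 2$.

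The heart of the argument is then to show that, when $p=2$ and $\alpha_t\ge 2$, \emph{every} involution $x\in G$ satisfies $\deg(x)>\delta(\mathcal{G}(G))=2^{\alpha_t}-1$, contradicting the existence of a minimum-degree involution guaranteed by \Cref{ConDegEq}(ii). Writing $x=(x_1,\dots,x_r)$ with $x_i\in\{0,2^{\alpha_i-1}\}$ and support $S=\{i:x_i\ne 0\}$, set $m=\min_{i\in S}\alpha_i\ge\alpha_t$. I would first exhibit $w$ with $w_i=2^{\alpha_i-m}$ for $i\in S$ and $w_i=0$ otherwise; a direct check gives $2^{m-1}w=x$ and $o(w)=2^{m}$, so $x$ lies in the cyclic subgroup $\langle w\rangle$ of order $2^{m}$, which is a clique in $\mathcal{G}(G)$ by \Cref{CompleteCond}(ii). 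The extra neighbour is obtained by producing a \emph{second} cyclic subgroup of order $2^{m}$ containing $x$ but distinct from $\langle w\rangle$: if $S\ne\{1,\dots,r\}$, perturb $w$ in a coordinate $i'\notin S$ by setting $w'_{i'}=2^{\alpha_{i'}-1}$; if $S=\{1,\dots,r\}$, fix $j\in S$ with $\alpha_j=m$ and perturb $w$ in some coordinate $i'\ne j$ by setting $w'_{i'}=w_{i'}+2^{\alpha_{i'}-1}$. In either case the hypothesis $m\ge\alpha_t\ge 2$ makes the perturbation vanish after multiplication by $2^{m-1}$, so that $2^{m-1}w'=x$ and $o(w')=2^{m}$ still hold, while comparing the $i'$-th coordinates shows $\langle w'\rangle\ne\langle w\rangle$.

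Since $\langle w\rangle$ and $\langle w'\rangle$ are two distinct cliques of order $2^{m}$, both containing $x$, and neither is contained in the other, $x$ is adjacent to every element of $\langle w\rangle\cup\langle w'\rangle$ other than itself; hence
\[
\deg(x)\ \ge\ |\langle w\rangle\cup\langle w'\rangle|-1\ \ge\ 2^{m}\ \ge\ 2^{\alpha_t}\ >\ 2^{\alpha_t}-1\ =\ \delta(\mathcal{G}(G)).
\]
This contradiction completes the contrapositive. I expect the main obstacle to be exactly this explicit construction of two distinct maximal cyclic subgroups through an arbitrary involution, together with the verification that the perturbation is invisible under multiplication by $2^{m-1}$; the case split on whether the support $S$ is the whole index set (which determines whether a free coordinate outside $S$ is available) is the delicate point, and it is precisely where the hypotheses $r=\sigma(G)\ge 2$ and $\alpha_t\ge 2$ are both used.
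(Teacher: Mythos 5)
Your proposal is correct, but it takes a genuinely different route from the paper's own proof. The paper's argument is essentially two lines: for non-cyclic $G$ it combines \Cref{DegAbelianp} (giving $\delta(\mathcal{G}(G))=\tau(G)-1$) with an \emph{external} result, \cite[Corollary 3.4]{ConPowerGr17}, asserting that $\kappa(\mathcal{G}(G))=1$ for every non-cyclic abelian $p$-group, so that equality holds precisely when $\tau(G)-1=1$. You avoid that external connectivity computation entirely: your sufficiency argument for $\tau(G)=2$ uses only connectedness (\Cref{CompleteCond}(i)) and the Whitney inequality (\Cref{ConDeglneqPower}), and your necessity argument runs by contraposition through \Cref{ConDegEq}(ii) --- the paper's own necessary condition that a minimum-degree vertex must be an involution --- which is contradicted for odd $p$ by parity of $|G|$, and for $p=2$, $\tau(G)\geq 4$ by your explicit construction: through an arbitrary involution $x$ you build two distinct cyclic subgroups $\langle w\rangle\neq\langle w'\rangle$ of order $2^m\geq\tau(G)$, both cliques by \Cref{CompleteCond}(ii), forcing $\deg(x)\geq 2^m>\tau(G)-1=\delta(\mathcal{G}(G))$. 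The computations check out: $2^{m-1}w=x$, the perturbation is annihilated by $2^{m-1}$ exactly because $m\geq\alpha_t\geq 2$, and $o(w')=2^m$. What each approach buys: the paper's is shorter and pinpoints both invariants ($\kappa=1$ against $\delta=\tau(G)-1$), quantifying the gap between them, but leans on prior work; yours is self-contained within the present paper and exercises \Cref{ConDegEq} as a genuine tool, at the cost of length and of establishing only $\kappa\neq\delta$ rather than the value of $\kappa$. One small repair in your Case B ($S=\{1,\dots,r\}$): \enquote{comparing the $i'$-th coordinates} alone does not rule out $w'\in\langle w\rangle$, since $w'_{i'}=2^{\alpha_{i'}-m}(1+2^{m-1})$ is still a multiple of $w_{i'}=2^{\alpha_{i'}-m}$; instead, if $w'=kw$, the unperturbed $j$-th coordinate (where $w_j=1$ has order $2^m$) forces $k\equiv 1 \pmod{2^m}$, hence $w'=w$, contradicting $w'_{i'}-w_{i'}=2^{\alpha_{i'}-1}\neq 0$. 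That is a routine fix, not a gap.
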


\begin{proof}
If $G$ is cyclic, i.e. $\sigma(G)=1$ if and only if $\kappa(\mathcal{G}(G))=\delta(\mathcal{G}(G))=|G|-1$.

Now let $G$ be non-cyclic. By \Cref{DegAbelianp}, $\delta(\mathcal{G}(G))=\tau(G)-1$ and by \cite[Corollary 3.4]{ConPowerGr17}, $\kappa(\mathcal{G}(G))=1$. Thus $\kappa(\mathcal{G}(G))=\delta(\mathcal{G}(G))$ if and only if $\tau(G)=2$.
\end{proof}

\begin{theorem}\label{mec-other}
For $n \in \mathbb{N}$, the following hold:
\begin{enumerate}[\rm(i)]
\item For $n \geq 3$, $\kappa(\mathcal{G}(D_n))=\delta(\mathcal{G}(D_n))$.
\item For $n \geq 2$, $\kappa(\mathcal{G}(Q_n)) \neq \delta(\mathcal{G}(Q_n))$.
\end{enumerate}
\end{theorem}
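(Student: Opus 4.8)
The plan is to handle the two parts separately, in each case leaning on the values of $\delta$ already computed in \Cref{DihedralDegree} and \Cref{DicyclicMinDeg} together with Whitney's inequality recorded in \Cref{ConDeglneqPower}.

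For part (i), I would show that $\kappa(\mathcal{G}(D_n))$ is squeezed to equal $1$. On one side, \Cref{DihedralDegree} gives $\delta(\mathcal{G}(D_n)) = 1$, so by \Cref{ConDeglneqPower} we get $\kappa(\mathcal{G}(D_n)) \le \delta(\mathcal{G}(D_n)) = 1$. On the other side, \Cref{CompleteCond}(i) says $\mathcal{G}(D_n)$ is connected, and since $|D_n| = 2n \ge 6$ the graph is nontrivial; a connected nontrivial graph is neither disconnected nor trivial after removing no vertices, so $\kappa(\mathcal{G}(D_n)) \ge 1$. Combining the two inequalities yields $\kappa(\mathcal{G}(D_n)) = 1 = \delta(\mathcal{G}(D_n))$, which is the assertion.

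For part (ii), the cleanest route is a proof by contradiction using the necessary condition in \Cref{ConDegEq}(ii). First I would record that $Q_n$ is non-abelian for every $n \ge 2$: since $o(a) = 2n \ge 4$ we have $a \ne a^{-1}$, while the relation $ab = ba^{-1}$ then shows $ab \ne ba$, so $Q_n$ is not cyclic and in particular not a cyclic group of prime power order. Now suppose, for contradiction, that $\kappa(\mathcal{G}(Q_n)) = \delta(\mathcal{G}(Q_n))$. By \Cref{DicyclicMinDeg} the minimum degree $3$ is attained at the vertex $b = a^0 b$, and from $(a^i b)^2 = a^n$ together with $a^n \ne e$ of order $2$ one computes $o(b) = 4$. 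But \Cref{ConDegEq}(ii) forces every vertex of minimum degree to have order $2$, contradicting $o(b) = 4$. Hence $\kappa(\mathcal{G}(Q_n)) \ne \delta(\mathcal{G}(Q_n))$.

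I do not anticipate a serious obstacle, since both parts reduce to already-established facts; the only point needing a little care is confirming $o(a^i b) = 4$ in $Q_n$, which is routine from the relations in \eqref{DicyclicEq}. As an alternative to invoking \Cref{ConDegEq}(ii) in part (ii), one can instead exhibit an explicit separating set of size $2$: by \eqref{EqQn} the only neighbours of $a^i b$ in $\langle a^i b\rangle$ other than $a^{n+i}b$ are $e$ and $a^n$, so deleting $\{e, a^n\}$ strands each pair $\{a^i b, a^{n+i} b\}$ as an isolated edge, disconnecting it from the component containing $\langle a \rangle$; this gives $\kappa(\mathcal{G}(Q_n)) \le 2 < 3 = \delta(\mathcal{G}(Q_n))$. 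Either argument closes the proof.
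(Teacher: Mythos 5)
Your proof is correct, but it takes a genuinely different route from the paper's. The paper's own proof is a two-line comparison: it quotes the values $\kappa(\mathcal{G}(D_n))=1$ and $\kappa(\mathcal{G}(Q_n))=2$ from the external reference of Chattopadhyay and Panigrahi, and sets them against the minimum degrees from \Cref{DihedralDegree} and \Cref{DicyclicMinDeg}. You avoid that external input entirely: in (i) you squeeze $\kappa(\mathcal{G}(D_n))$ between $1$ (connectedness via \Cref{CompleteCond}(i) plus nontriviality of the graph) and $\delta(\mathcal{G}(D_n))=1$ (Whitney, \Cref{ConDeglneqPower}); in (ii) you either invoke the paper's own necessary condition \Cref{ConDegEq}(ii) --- the minimum-degree vertex $b$ has order $4$, not $2$, and $Q_n$ is non-abelian hence not cyclic of prime power order --- or exhibit the explicit separating set $\{e,a^n\}$, giving $\kappa(\mathcal{G}(Q_n)) \leq 2 < 3 = \delta(\mathcal{G}(Q_n))$. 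Both variants are sound: $o(a^ib)=4$ follows from $(a^ib)^2=a^n \neq e$, and deleting $\{e,a^n\}$ does disconnect the graph since $N(a^ib)=\{e,a^n,a^{n+i}b\}$ by \eqref{EqQn} while $\langle a\rangle$ retains at least two further vertices. What you lose relative to the paper is the exact value $\kappa(\mathcal{G}(Q_n))=2$ --- your argument only yields an upper bound, though that is all the theorem requires; what you gain is a proof that is self-contained within the paper's own results, and in (ii) a concrete illustration that the necessary condition of \Cref{ConDegEq}(ii) does real work.
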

\begin{proof}
It was shown in \cite{ChattopadhyayConnectivity} that $\kappa(\mathcal{G}(D_n)) = 1$ and $\kappa(\mathcal{G}(Q_n)) = 2$. On the other hand, by \Cref{DihedralDegree} we have $\delta(\mathcal{G}(D_n)) = 1$ and by \Cref{DicyclicMinDeg} we have $\delta(\mathcal{G}(Q_n)) = 3$. Hence the result follows.
\end{proof}

\section{Concluding remarks}
 We showed in \Cref{mindeg} that $\delta(\mathcal{G}(\mathbb{Z}_n)) \leq \eta_1(n)$ and $\delta(\mathcal{G}(\mathbb{Z}_n)) \leq \eta_2(n)$, and equality of these for certain classes of $n$ was shown in \Cref{MinDegValue}  beforehand. Thus, one may be interested in determining all $n \in \mathbb{N}$ for which $\eta_1(n)$ or  $\eta_2(n)$ is the minimum degree of $\mathcal{G}(\mathbb{Z}_n)$. Moreover, in \Cref{s-equal}, we obtained a necessary and sufficient condition for the equality of connectivity and minimum degree of $\mathcal{G}(\mathbb{Z}_n)$ and further examined it power graphs of some other groups. In the sequel, characterizing the groups for which the connectivity equals the minimum degree of their power graphs is open for study.


\end{document}